\newtheorem*{theorem*}{Theorem}
\newtheorem{theorem}{Theorem}[section]
\newtheorem{lemma}[theorem]{Lemma}
\newtheorem{corollary}[theorem]{Corollary}
\newtheorem{definition}[theorem]{Definition}
\newtheorem{proposition}[theorem]{Proposition}
\def\K{K\"ahler }
\def\i{\sqrt{-1}}
\def\del{\partial}
\def\dbar{\bar\partial}
\def\ddbar{\del\dbar}
\def\del{\partial}
\newcommand{\calH}{\mathcal{H}}
\DeclareMathOperator{\Ric}{Ric}
\def\o{\omega}
\def\h#1{\hbox{#1}}
\def\Herm{\operatorname{Herm}}
\def\calHo{\mathcal H_\o}
\def\Ho{\calHo}
\def\Pn{\mathcal P_n}
\def\P{\mathcal P}
\def\Hermn{\Herm_n}
\def\tr{\operatorname{tr}}
\def\qq{\qquad}
\def\FS{\operatorname{FS}}
\def\FSk{\FS_k}
\def\Hilb{\operatorname{Hilb}}
\def\beq{\begin{equation}}
\def\eeq{\end{equation}}
\def\lb{\label}
\title{Quantization in geometric pluripotential theory}
\author{Tam\'as Darvas, Chinh H. Lu, Yanir A. Rubinstein}
\date{}
\begin{document}
\maketitle
\begin{abstract}

The space of K\"ahler metrics can, on the one hand, be approximated by subspaces of algebraic metrics, while, on the other hand, can be enlarged to finite-energy spaces arising in pluripotential theory. The latter spaces are realized as metric completions of Finsler structures on the space of K\"ahler metrics. The former spaces are the finite-dimensional spaces of Fubini--Study metrics of  K\"ahler quantization. The goal of this article is to draw a connection between the two.

We show that the Finsler structures on the space of K\"ahler potentials can be quantized. More precisely, given a K\"ahler manifold
 polarized by an ample line bundle we endow the space of Hermitian metrics on powers of that line bundle with Finsler structures and show that the resulting path length metric spaces recover the  corresponding metric completions of the Finsler structures on the space of K\"ahler potentials. This has a number of applications, among them a new approach to the rooftop envelopes and Pythagorean formulas of K\"ahler geometry, a new Lidskii type inequality on the space of K\"ahler metrics, and approximation of finite energy potentials, as well as geodesic segments by the corresponding smooth algebraic objects.
\end{abstract}

\section{Introduction}

Given an ample line bundle
$L$ over a compact \K manifold $(X,\o)$, 
a major theme in K\"ahler geometry, going back to a problem of Yau \cite[p. 139]{Yau2} and work of Tian  \cite{Ti88, Ti90}  thirty years ago, 
has been  the approximation (or ``quantization'')
of the infinite-dimensional space of K\"ahler potentials
$$\mathcal H_\o:= \{u \in C^\infty(X) \,:\,\,  \o_u:=\o + \i\ddbar u >0 \},$$
by the finite-dimensional spaces 
$$\mathcal H_k := \{ \textup{positive Hermitian forms on } H^0(X,L^k)  \},$$
since the $\mathcal H_k$ can be identified as {\it subspaces} of $\mathcal H_\o$
consisting of (algebraic) Fubini--Study metrics. 
Around the same time,  Mabuchi and Semmes introduced an $L^2$ type metric on $\calH_\o$
\cite{Ma87,Semmes88,Se92} and about fifteen years ago it was suggested  by Donaldson that
the geometry of $\calH_\o$ should be approximated by the geometry of $\calH_k$ \cite[p. 483]{Do01}. 

On the other hand, recently $L^p$ type Finsler structures on $\mathcal H_\o$ were introduced, and 
the resulting path length metric structures $(\mathcal H_\o,d_p)$ along with their completions 
$$(\mathcal E^p_\o,d_p)$$ were studied in depth by the first-named author \cite{Da15}. 
Subsequently, the space $(\mathcal E^1_\o, d_1)$
was shown to be intimately related to  existence of special metrics and 
energy properness,
leading to a proof of long-standing conjectures of Tian on the analytic characterization of K\"ahler--Einstein metrics and the strong 
Moser--Trudinger inequality \cite{DR17} (see
Tian--Zhu \cite{TZ99} and Phong et al. \cite{PSSW}
for important earlier progress), paving the way
to a number of related advances on key problems
concerning further relations betweem stability and existence of 
canonical metrics \cite{BBJ15, BDL16,BDL17, DeR17, SD17, CC1,CC2}.
For additional references we refer to the recent surveys \cite{Bo18,Da17,R}.

These two approaches to studying the space $\calHo$---quantization vs. 
completion---have been considered complementary in the literature.
The main goal of this article is to draw a connection between the two
and approximate/quantize the spaces 
$\mathcal E^p_\o$ 
of geometric pluripotential theory
by the finite-dimensional spaces $\mathcal H_k$ of K\"ahler quantization.
As we will see, this leads to new results on both ``sides".  

Furthermore, this is relevant, if not central, to 
the variational program on the Yau--Tian--Donaldson conjecture
that involves showing that geometric data arising in $\mathcal E^p_\o$ 
can be approximated using algebro-geometric data from $\mathcal H_k$, as $k \to \infty$ \cite{Bo18}.

More specifically, in this paper we introduce  $L^p$ Finsler geometries on $\mathcal H_k$ and show that the resulting (complete) path length metric spaces $$(\mathcal H_k,d_{p,k})$$ approximate {\it and recover} $(\mathcal E^p_\o,d_p)$  in the large  $k$ limit.

Since general elements of $(\mathcal E^p_\o,d_p)$ are rather singular (in particular, not even bounded), this significantly extends and sheds light on the broader geometric meaning of 
the work of Berndtsson, Chen-Sun, Phong-Sturm, and Song--Zelditch \cite{Bern09,CS12,PS06,SZ}, also complementing classical results going back to Tian, Bouche, and Kempf \cite{Ti90,Bo90,Ke92} with improvements by Ruan, Catlin, Zelditch, Lu,
and  Ma--Marinescu \cite{Ruan,Ca99,Ze98,Lu00,MM}. All of these works only considered approximation of smooth potentials (i.e., elements of $\calH_\o$), within the context of the $L^2$ geometry.

\paragraph{The $L^p$ Finsler geometry of Hermitian matrices.} In the first part of the article we introduce different $L^p$ Finsler structures on 
$$
\Pn:=\{\h{positive Hermitian $n$-by-$n$ matrices}\}.
$$
For any $h\in\Pn$, the tangent space is
\beq\lb{Hermneq}
T_h\Pn=\Hermn:=\{\h{Hermitian $n$-by-$n$ matrices}\}.
\eeq
There is a classical Riemannian metric on $\Pn$,
$$
\langle \eta,\nu \rangle|_h:=\frac1n \tr \big[h^{-1}\eta h^{-1}\nu \big],\qq \eta,\nu\in T_h\Pn, 
$$
and, by a standard variational argument \cite[p. 195]{Kob}, geodesics
with endpoints $h_0,h_1 \in \mathcal P_n$ are solutions of
\begin{equation}
\label{eq: goed_eq_intr}
\frac{d}{dt} \Big(h_t^{-1} \cdot \dot{h}_t\Big)=0, \ \ \ t \in [0,1],
\end{equation}
thus (op. cit.),
$$
d_{2,\Pn}(h_0,h_1)= \bigg[\frac{1}{n}\sum_{j=1}^n |\lambda_j|^2 \bigg]^{\frac{1}{2}}, 
$$
where 
\begin{equation}
\label{eq: eigenvals_intro}
e^{\lambda_1}, \ldots, e^{\lambda_n}
\end{equation}
 are the eigenvalues of $h_0^{-1}h_1$. 
For any $p\ge1$ we introduce Finsler structures on $\Pn$,
$$
||\nu||_{p,h}:=
\bigg[\frac1n\tr\big(|h^{-1}\nu|^p\big)\bigg]^{\frac{1}{p}},\qq \nu\in T_h\Pn. 
$$
We denote by 
$d_{p,\Pn}$
 the resulting path length metric on $\Pn$.
\begin{theorem}\label{thm: Hn_+_metric_geod-p-case} 
Let $p \geq 1$. 
Solutions of (\ref{eq: goed_eq_intr}) 
are metric geodesics of $(\Pn,d_{p,\Pn})$, thus (recall (\ref{eq: eigenvals_intro}))
\begin{equation}\label{eq: def_L^p_metric_intr}
d_{p,\Pn}(h_0,h_1)= \bigg[\frac{1}{n}\sum_{j=1}^n |\lambda_j|^p \bigg]^{\frac{1}{p}}, \ \  h_0,h_1 \in \Pn,
\end{equation}
and therefore $(\Pn,d_{p,\Pn})$ is a geodesic metric space.
\end{theorem}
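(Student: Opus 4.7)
The plan is to prove both inequalities in (\ref{eq: def_L^p_metric_intr}) separately, after first reducing to the case $h_0 = I$ using the symmetry of $\Pn$. The group $GL_n(\CC)$ acts on $\Pn$ by $g \cdot h := g h g^*$, and this action is by Finsler isometries: indeed $(ghg^*)^{-1}(g\nu g^*) = g^{-*}(h^{-1}\nu)g^*$ is similar to $h^{-1}\nu$, so the eigenvalues (and hence $\tr|\,\cdot\,|^p$) are preserved. Taking $g = h_0^{-1/2}$ reduces the problem to computing $d_{p,\Pn}(I,h)$ where $h := h_0^{-1/2} h_1 h_0^{-1/2}$ has eigenvalues $e^{\lambda_1},\ldots,e^{\lambda_n}$. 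For the upper bound, the candidate path $h_t := e^{t\Lambda}$ with $\Lambda := \log h$ satisfies $h_t^{-1}\dot h_t = \Lambda$ constant, so (\ref{eq: goed_eq_intr}) holds; since $\Lambda$ is Hermitian with eigenvalues $\lambda_j$, the Finsler speed $[\tfrac{1}{n}\tr|\Lambda|^p]^{1/p} = [\tfrac{1}{n}\sum|\lambda_j|^p]^{1/p}$ is constant along the path, and integration yields the upper bound on $d_{p,\Pn}(I,h)$.

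For the matching lower bound, I would show that $\psi(h) := [\tfrac{1}{n}\tr|\log h|^p]^{1/p}$ is $1$-Lipschitz with respect to $d_{p,\Pn}$. Since $\psi(I) = 0$, integration along any path from $I$ to $h$ then forces $\psi(h) \leq d_{p,\Pn}(I,h)$, matching the upper bound and simultaneously certifying the explicit path as a minimizer (hence a metric geodesic). To establish the Lipschitz estimate it suffices to show $|d\psi_h(\nu)| \leq \|\nu\|_{p,h}$ for all Hermitian $\nu$. Writing $h = U \operatorname{diag}(\mu_i) U^*$ and $\tilde\nu := U^*\nu U$, the Daleckii--Krein formula gives $[d(\log h)(\nu)]_{ii} = \tilde\nu_{ii}/\mu_i$ in the spectral basis of $h$. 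Combining this with the standard identity $d(\tr F(A))(B) = \tr(F'(A)B)$ applied to $F(x) = |x|^p$ via the chain rule yields
\[
d\psi_h(\nu) \;=\; \frac{\psi(h)^{1-p}}{n}\sum_{i=1}^n |\log\mu_i|^{p-1}\operatorname{sgn}(\log\mu_i)\cdot\frac{\tilde\nu_{ii}}{\mu_i}.
\]
Two standard tools then complete the bound: H\"older's inequality with conjugate exponents $p$ and $p/(p-1)$, used together with $\sum_i|\log\mu_i|^p = n\psi(h)^p$, and the Schur--Horn majorization of the diagonal of the Hermitian matrix $h^{-1/2}\nu h^{-1/2}$ by its eigenvalues, combined with convexity of $x \mapsto |x|^p$, which gives $\sum_i |\tilde\nu_{ii}/\mu_i|^p \leq n\|\nu\|_{p,h}^p$.

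The main obstacle is the lower bound step. The differential computation is cleanest at points where $h$ has distinct eigenvalues (and, for $p=1$, no eigenvalue equal to $1$); the Lipschitz estimate extends to all of $\Pn$ by continuity from this dense open set. The special case $p=2$ recovers the classical Riemannian distance formula on the symmetric space $\Pn = GL_n(\CC)/U(n)$, and the argument above generalizes this by replacing the self-duality of the Frobenius-type inner product with the $L^p$--$L^{p/(p-1)}$ duality built into H\"older's inequality.
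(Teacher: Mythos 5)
Your proof is correct, but it takes a genuinely different route from the paper's. You reduce to $h_0=I$ by the isometric $GL_n(\CC)$ action, get the upper bound from the explicit exponential path (constant Finsler speed), and obtain the matching lower bound by showing the function $\psi(h)=[\tfrac1n\tr|\log h|^p]^{1/p}$ is $1$-Lipschitz for $d_{p,\Pn}$, via the Daleckii--Krein trace-derivative formula, H\"older's inequality, and Schur--Horn majorization plus convexity of $|x|^p$; the non-smoothness of $|x|^p$ at $0$ (relevant for $p=1$ and at $h=I$) is handled by working on the dense open set of ``good'' matrices and extending by continuity of $\psi$ and $d_{p,\Pn}$. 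The paper instead works in the broader Orlicz framework $\mathcal W^+_p$: it differentiates the Finsler speed $\|\phi_t\|_{\chi,h_t}$ via the implicit function theorem (Proposition \ref{prop: H_norm_deriv}), proves a length-comparison lemma $l_\chi(\{g_{0,t}\}) \le l_\chi(\{h_s\}) + l_\chi(\{g_{1,t}\})$ by a clever calculation exploiting the geodesic equation and the matrix Young inequality (Proposition \ref{prop: Young_eq_ineq_matrix}), and then handles non-smooth weights like $|x|^p$ by approximating with smooth strictly convex Orlicz weights. Both arguments ultimately rely on a convex-duality estimate, but in quite different guises: your Schur--Horn step corresponds roughly to Peierls' inequality inside Proposition \ref{prop: Young_eq_ineq_matrix}(ii), and your H\"older step to the Young inequality. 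The trade-off: your argument is shorter and more elementary for the $L^p$ case, producing an explicit distance-certifying potential $\psi$; the paper's route is longer but yields the strictly stronger Theorem \ref{thm: Hn_+_metric_geod} for all weights $\chi\in\mathcal W^+_p$, which the authors note is the form actually needed (the $L^p$ norms not being differentiable). One point worth making explicit in a polished write-up is the density argument for the $p=1$ case: a generic smooth path avoids the measure-zero set $\{\det(h-I)=0\}$, so the integrated Lipschitz bound holds after perturbing the competitor path and passing to the limit.
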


Note that the geodesic equation is therefore {\it independent of $p$}.
This result is inspired  by an analogous result
in the infinite-dimensional setting of $\Ho$: the $L^p$ Finsler structures on $\Ho$
have common geodesics \cite[Theorem 1]{Da15}. 
The proof of Theorem \ref{thm: Hn_+_metric_geod-p-case} is also
inspired by that of op. cit., and actually
requires proving a stronger result (Theorem \ref{thm: Hn_+_metric_geod}) showing that the much larger class of Finsler structures defined by weights $\chi$
with $\chi \in \mathcal W^+_p$
have common geodesics. Here, $\mathcal W^+_p$ is a well-studied
class of strongly convex Orlicz weights that have growth 
properties somewhat similar to those of an $L^p$ weight, but are more flexible (see Definition  \ref{WpDef}).
The $L^p$ Finsler norms are not differentiable in general and so our proof
of Theorem \ref{thm: Hn_+_metric_geod-p-case} requires this larger generality.
We mention that shortly before posting this article, the work \cite{BE18} appeared, 
where the authors also considered Finsler metrics on $
\mathcal H_k$, and obtained a general result that is similar to Theorem  \ref{thm: Hn_+_metric_geod-p-case}, motivated by completely different questions in non-Archimedean geometry
\cite[Theorem 3.7]{BE18}.

\paragraph{Quantization of the $L^p$ Mabuchi geometry.} 
We move on to the main topic of the article, the quantization of the complete metric spaces $(\mathcal E^p_\o,d_p)$. Let
$$\textup{d}_k = \dim H^0(X,L^k),$$
and endow the spaces $\mathcal H_k$ with the metric 
\begin{equation}\label{eq: d^k_p_def_intr}
d_{p,k}(\cdot,\cdot) :=\frac{1}{k} d_{p,\P_{\textup{d}_k}} \hspace{-0.07cm}(\cdot,\cdot).
\end{equation}
Here, $\frac{1}{k}$ plays the role of the Planck constant in quantum mechanics.  
Note that this definition is legitimate, as \eqref{eq: def_L^p_metric_intr} implies the metric $d_{p,\P_{\textup{d}_k}}$ is invariant under
unitary change of basis  in $\Bbb C^{\textup{d}_k}$.

As shown in \cite{Da15}, and recalled in  Section 2.2 below, 
the metric completion $\overline{(\mathcal H,d_p)}$ can be identified with $(\mathcal E^p_\o,d_p)$, where $\mathcal E^p_\o$ 
is a subset of $\textup{PSH}(X,\o)$ introduced by Guedj--Zeriahi \cite{GZ07}. The main goal of this work is to recover these infinite-dimensional complete metric structures by taking some large limit of $(\mathcal H_k,d_{p,k})$.
To that end, let us first recall the basic maps between
the finite-dimensional and infinite-dimensional spaces.

The first map is the less obvious one and goes
from $\mathcal E^p_\o$ to $\mathcal H_k$. We fix $h_L$, a hermitian metric on $L$ whose curvature is $\o=\Theta(h_L)=-\i\ddbar \log h_L>0$. We denote by $h_L^k$ the $k$-th tensor product of the metric on $L^k$, the $k$-th tensor product of  $L$. 

Define the Hilbert map
${{\hbox{\rm H}}}_k:\mathcal E^p_\o \to \mathcal H_k$ by
$${{\hbox{\rm H}}}_k(u)(s,s): = \int_X h_L^k(s,s)e^{-ku}\o^n.$$
This is not quite the well-known Hilbert map often denoted by $Hilb_k$  in the literature
\cite[\S2]{Do05}, since we integrate against $\o^n$ instead of $\o_u^n$. 
In fact, for any $u \in \mathcal E^p_\o \setminus \mathcal E^q_\o, \ q >p,$ the integral $\int_X e^{-ku}\o_u^n$ is seen to be infinite for all $k$.
On the other hand, since elements of $\mathcal E^p_\o$ have zero Lelong numbers,
the map ${{\hbox{\rm H}}}_k$ above is well-defined.
 In particular, in the context of the $L^p$ metric completions $\mathcal E^p_\o$, this definition of ${{\hbox{\rm H}}}_k$ is not only the most natural, but also 
the only one that makes sense.  

In the opposite direction, the classical map ${\FSk}: \mathcal H_k \to \mathcal H_\o \subset \mathcal E^p_\o$ sends  an inner product $G$ to the associated Fubini--Study metric restricted to $X$, 
$${\FSk}(G): = \frac{1}{k}\log \sum_{j=1}^{\textup{d}_k} |e_j|^2_{h_L^k},$$
where $\{ e_j\}_{j=1,\ldots,\textup{d}_k}$ is a (any) $G$--orthonormal basis of $H^0(X,L^k).$ 
Equivalently, $\FSk(G)$ can be thought of as a Bergman kernel for which the classical
extremal characterization will prove handy:
\begin{equation}\label{eq: FSk_extremal_def}
{\FSk}(G)(x) = \sup_{s \in H^0(X,L^k), G(s,s)=1} \frac{1}{k} \log |s(x)|^2_{h_L^k}.
\end{equation}

The next theorem summarizes our main results, as we quantize the points, distances and geodesics in $(\mathcal E^p_\o,d_p)$.

\begin{theorem}\label{thm: L^p_Quant_intr} For any $p \geq 1$ the following hold:\\ 
(i)(Quantization of points) For $v \in \mathcal E^p_\o$ we have $$\lim_k d_p({\FSk} \circ {{\hbox{\rm H}}}_k(v),v) =0.$$ 
\noindent (ii)(Quantization of distance) For $v_0,v_1 \in \mathcal E^p_\o$ we have $$\lim_k d_{p,k}({{\hbox{\rm H}}}_k(v_0), {{\hbox{\rm H}}}_k(v_1)) = d_p(v_0,v_1).$$
\noindent (iii)(Quantization of geodesics) Suppose $u_0,u_1 \in \mathcal E^p_\o$ and $[0,1] \ni t \to u_t \in \mathcal E^p_\o$ is the $L^p$-finite-energy geodesic connecting $u_0,u_1$ (Definition \ref{feDef}). Let $[0,1] \ni t \to U^k_t \in \mathcal H_k$ be the $L^p$-Finsler geodesic  joining $U_0^k={{\hbox{\rm H}}}_k(u_0)$ and $U_1^k={{\hbox{\rm H}}}_k(u_1)$, solving (\ref{eq: goed_eq_intr}). Then 
$$\lim_k d_p({\FSk}(U^k_t), u_t) = 0 \ \textup{ for any } \ t \in [0,1].$$
\end{theorem}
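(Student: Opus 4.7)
The plan is to prove the three assertions in the order (i), (ii), (iii), since each step builds on the previous. The main obstacle is the lower bound in part (i) for singular $v$: classical Bergman kernel asymptotics are only available for continuous data, and smooth monotone approximation from below of $\omega$-psh functions is generally unavailable. The resolution combines Demailly regularization with the monotonicity of $\FSk\circ{{\hbox{\rm H}}}_k$ and converts pointwise convergence into $d_p$-convergence via uniform integrability of $e^{-kv}$ (automatic from the vanishing Lelong numbers of elements of $\mathcal{E}^p_\omega$).

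For (i), I would establish two one-sided estimates. The upper bound $\FSk\circ{{\hbox{\rm H}}}_k(v)\leq v+\varepsilon_k$ with $\varepsilon_k\to 0$ follows from a submean-value argument: for any $s\in H^0(X,L^k)$ normalized by ${{\hbox{\rm H}}}_k(v)(s,s)=1$, the function $|s|^2_{h_L^k}e^{-kv}$ is plurisubharmonic on $X$, so $|s(x)|^2_{h_L^k}e^{-kv(x)}\leq C_k$ and hence \eqref{eq: FSk_extremal_def} gives $\FSk({{\hbox{\rm H}}}_k(v))(x)\leq v(x)+k^{-1}\log C_k$. For bounded $v$ the reverse bound is the classical Tian--Bouche--Catlin--Zelditch Bergman expansion. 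For general $v\in\mathcal{E}^p_\omega$, one takes a decreasing Demailly approximation $v_j\searrow v$ and uses the order-preserving property of $\FSk\circ{{\hbox{\rm H}}}_k$ (both ${{\hbox{\rm H}}}_k$ and $\FSk$ reverse order) to obtain $\FSk({{\hbox{\rm H}}}_k(v))\leq\FSk({{\hbox{\rm H}}}_k(v_j))\leq v_j+\varepsilon^{(j)}_k$, passing to a diagonal limit in $d_p$.

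Given (i), part (ii) reduces via the triangle inequality to the asymptotic identity
$$d_{p,k}\bigl({{\hbox{\rm H}}}_k(v_0),{{\hbox{\rm H}}}_k(v_1)\bigr) = d_p\bigl(\FSk({{\hbox{\rm H}}}_k(v_0)),\FSk({{\hbox{\rm H}}}_k(v_1))\bigr) + o(1).$$
The bound $d_p(\FSk(U_0),\FSk(U_1))\leq d_{p,k}(U_0,U_1)$ is the contractivity of $\FSk$: one checks, in the spirit of Berndtsson, that $t\mapsto\FSk(U^k_t)$ along the matrix geodesic of Theorem \ref{thm: Hn_+_metric_geod-p-case} is a Mabuchi subgeodesic (positivity of the associated curvature current on the product of $X$ with a strip in $\mathbb{C}$), after which the Finsler length inequality on $\mathcal{H}_\omega$ applies. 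The matching asymptotic lower bound stems from the explicit formula \eqref{eq: def_L^p_metric_intr}, read as an $L^p$-mean of the log-eigenvalues of ${{\hbox{\rm H}}}_k(v_0)^{-1}{{\hbox{\rm H}}}_k(v_1)$, whose normalized spectral distribution concentrates, by Bergman kernel asymptotics, on the pointwise distribution of $k(v_1-v_0)$.

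For (iii), the matrix geodesic admits the explicit form $U^k_t=U^k_0\exp(tA_k)$ with $e^{A_k}=(U^k_0)^{-1}U^k_1$, so $\FSk(U^k_t)$ is a concrete curve in $\mathcal{H}_\omega$. By the subgeodesic property from (ii), each $t\mapsto\FSk(U^k_t)$ is a Mabuchi subgeodesic whose endpoints converge in $d_p$ to $u_0,u_1$ by (i) and whose total $d_p$-length converges to $d_p(u_0,u_1)$ by (ii). A compactness argument using $d_p$-lower semicontinuity of length, combined with uniqueness of finite-energy geodesics in $\mathcal{E}^p_\omega$ with prescribed endpoints, then forces $\FSk(U^k_t)\to u_t$ in $d_p$ for each fixed $t$.
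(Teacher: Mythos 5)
There is a genuine gap in your proposed proof of part (i), and it sits precisely where the paper invests most of its effort. You correctly note that the upper bound $\FSk\circ {{\hbox{\rm H}}}_k(v)\le v+\varepsilon_k$ follows from a submean-value estimate, and you correctly note that smooth approximation of $v$ \emph{from below} inside $\textup{PSH}(X,\o)$ is generally unavailable. But your workaround---a decreasing Demailly regularization $v_j\searrow v$ combined with the monotonicity of $\FSk\circ{{\hbox{\rm H}}}_k$---runs in the wrong direction: since $\FSk\circ{{\hbox{\rm H}}}_k$ is order-preserving and $v_j\ge v$, the chain $\FSk({{\hbox{\rm H}}}_k(v))\le \FSk({{\hbox{\rm H}}}_k(v_j))\le v_j+\varepsilon^{(j)}_k$ you write is nothing more than a second proof of the upper bound, and never produces the required lower bound $\FSk\circ{{\hbox{\rm H}}}_k(v)\ge v-o(1)$. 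Even if one tried to salvage this by sending $j\to\infty$ first at fixed $k$, the Bergman error $\varepsilon^{(j)}_k$ in the Tian--Bouche--Catlin--Zelditch expansion depends on $C^2$ norms of $v_j$, which blow up as $j\to\infty$ when $v$ is singular, so no diagonal argument gives a rate.

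The paper resolves exactly this difficulty by constructing an \emph{increasing} approximation from below with \emph{strong positivity}: $u_\delta:=\tfrac1\delta P(\delta u)\nearrow u$ as $\delta\searrow 1$, with $\o_{u_\delta}\ge\tfrac{\delta-1}{\delta}\o$ (Proposition \ref{prop: E_p_approx_below}). That strong positivity is what permits the Ohsawa--Takegoshi extension theorem (Theorem \ref{thm: OhsTak}) to be applied; its constant depends only on the positivity lower bound $\varepsilon$, not on any regularity of $u_\delta$, and via the extremal characterization \eqref{eq: FSk_extremal_def} yields the pointwise lower bound $u_\delta-C/k\le\FSk\circ{{\hbox{\rm H}}}_k(u)$. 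Your proposal never invokes Ohsawa--Takegoshi, never uses the envelope $P(\delta u)$, and never obtains the strong positivity that makes either applicable. The same omission propagates to (ii) and (iii): where you gesture at ``spectral concentration'' and a ``compactness argument'', the paper runs through Berndtsson's quantized maximum principle (Proposition \ref{prop: quant_max_princ}, Corollary \ref{cor: quant_max_princ}, Lemma \ref{lem: quant_max_princ_deriv}), again applied to the strongly positive subgeodesics built from $P(\delta u_i)$, together with endpoint stability of finite-energy geodesics. Without a substitute for the strongly-positive-approximation-plus-Ohsawa--Takegoshi mechanism, the lower bound in (i) does not close, and the later parts inherit the gap.
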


Theorem \ref{thm: L^p_Quant_intr} (i) is 
the geometric pluripotential theory analogue of the asymptotic expansion of the smooth Bergman kernel
(i.e., for {\it smooth $v$}) due to the work of Boutet de Monvel--Sjostrand, Catlin, Tian, and Zelditch
 \cite{BS76,Ca99,Ti90, Ze98} (for convergence to equilibrium in case of non-positive metrics  see \cite{Berm09, DMM16}).
 
Similarly, part (ii) for {\it smooth} $v_0,v_1$ and $p=2$ is a result of  Chen-Sun \cite[Theorem 1.1]{CS12}, using a slightly different ${{\hbox{\rm H}}}_k$ map  
(with an alternative proof due to Berndtsson \cite[Theorem 1.1]{Bern09} {using the language of spectral measures}). 

Finally, part (iii) for {\it smooth}  $u_0,u_1$ and $p=2$ is a theorem of Berndtsson \cite[Theorem 1.2]{Bern09}, {extending previous work of Phong-Sturm \cite[Theorem 1]{PS06}}. For smooth potentials $u_0,u_1$, Berndtsson proves actually $C^0$-convergence of $FS_k(U^k_t)$, which implies $d_2$-convergence by \cite[Theorem 3]{Da15}. Similarly, in case of toric manifolds, Song-Zelditch prove $C^2$-convergence of $FS_k(U^k_t)$.
We emphasize though that the $d_p$-convergence in our result is optimal since a typical element of $\mathcal E^p_\o$ is unbounded. 

Compared to the above mentioned works, in the absence of smoothness, the well known asymptotic expansion of the smooth Bergman kernel will have only very limited use, and instead we will have to rely almost exclusively on pluripotential theoretic and complex-algebraic tools. In addition to techniques in finite-energy pluripotential theory, our two cornerstones are the Ohsawa--Takegoshi extension theorem \cite{OT87} and the quantized maximum principle of Berndtsson \cite{Bern09}. However to use these latter theorems, one needs to work with strongly positive currents. In general finite-energy currents do not satisfy this  positivity property, and we need to develop a suitable approximation technique using strongly positive currents, this being one of the novelties of this work. 

Finally, we note that Theorem \ref{thm: L^p_Quant_intr} collectively answers questions raised by Guedj in the case $p=2$ \cite[p. 2]{Gu14}. Also, since K\"ahler metrics with Poincar\'e type singularities are contained in  $\mathcal E^p_\o$ for any $p \geq 1$, one would think that our global approximation results are possibly connected with results of local nature by Auvray--Marinescu--Ma \cite{AMM16} on Poincar\'e type metrics.

\paragraph{Quantization of rooftop envelopes and the Pythagorean formula.} We end the article by quantizing the rooftop envelopes and Pythagorean formulas of $\mathcal E^p_\o$, both essential ingredients in the metric geometry of $(\mathcal E^p_\o,d_p)$.

Given $u,v \in \mathcal  E^p_\o$, define
$$P(u,v):=\sup \{w \in \textup{PSH}(X,\theta) \textup{ \  s.t. \ } w \leq u,v \}.$$
Then  $P(u,v) \in \mathcal E^p_\o$ \cite[Theorem 3]{Da17}.
This rooftop type envelope was introduced by Ross--Witt Nystrom \cite{RWN14}, and its regularity was studied by two of us \cite{DR16}. The indispensable role of the operator $(u,v) \to P(u,v)$ within $L^p$ Mabuchi geometry was pointed out in \cite{Da15,Da17}, summarized by the {\it Pythagorean formula}:
\begin{equation}\label{eq: Pythagorean_intr}
d_p(u,v)^p = d_p(u,P(u,v))^p + d_p(v,P(u,v))^p. 
\end{equation} 
In the historically important $p=2$ case, this formula simply says that  $u,v$ and $P(u,v)$ form a right triangle with hypotenuse $uv$, motivating the origin of the name. This formula played a pivotal role in proving that $(\mathcal E^p_\o,d_p)$ is complete, by showing that an arbitrary Cauchy sequence is equivalent to a Cauchy sequence whose potentials are additionally monotone increasing. In addition to this, one can give an explicit formula for $d_1(u,v)$ in terms of $P(u,v)$ and the 
Monge--Amp\`ere energy, leading to equivalence of $d_1$-properness and $J$-properness, which paved the way to a proof of long-standing conjectures of Tian
\cite{DR17} and subsequently 
a number of related advances linking existence of canonical metrics to energy properness and stability \cite{BBJ15, BDL16, CC1, CC2}. 

Very recently, rooftop envelopes and Pythagorean formulas have been considered in the context of non-Archimedean K\"ahler geometry as well \cite{BJ18}, and it would be interesting to explore the connection between our Theorem 1.3 below and the results of \cite{BJ18}. 

Despite the numerous applications, until now the origin of the Pythagorean formula
remained mysterious. We now show that in fact this equation is the quantized version of an elementary metric identity for Hermitian matrices. 

Given $h_0,h_1 \in \mathcal H_k$ it is possible to find a $h_0$-orthonormal basis with respect to which
$$h_0 = \textup{diag}(1,\ldots,1) \ \textup{ and } \ h_1=\textup{diag}(e^{\lambda_1}, \ldots,e^{\lambda_{\textup{d}_k}})$$
We introduce the \emph{quantum rooftop envelope} as follows:
$${{P}}_k(h_0,h_1):= \textup{diag}(\max(1,e^{\lambda_1}),\ldots,\max(1,e^{\lambda_{\textup{d}_k}}))=\textup{diag}(e^{\lambda_1^+},\ldots,e^{\lambda_{\textup{d}_k}^+}).$$
We see that ${{P}}_k(h_0,h_1)$ is well-defined and invariant under change of $h_0$-orthonormal bases. Moreover, somewhat imprecisely one may think of ${{P}}_k(h_0,h_1)$ as the smallest Hermitian form that is bigger than both $h_0$ and $h_1$. Comparing with \eqref{eq: def_L^p_metric_intr}, it is elementary to verify that the quantum Pythagorean formula holds:
\begin{equation}\label{eq: Pyth_formula_quant_intr}
d_{p,k}(h_0,h_1)^p = d_{p,k}(h_0,{{P}}_k(h_0,h_1)))^p + d_{p,k}({{P}}_k(h_0,h_1),h_1)^p.
\end{equation}
In our last main result we simultaneously quantize the rooftop envelope $P(u,v)$ together with the Pythagorean formula:
\begin{theorem}\label{thm: Quant_Pyt_rooftop_intr} Let $u_0,u_1 \in \mathcal E^p_\o$. Then the following hold:\\
\noindent (i) $\lim_k d_p(P(u_0,u_1), {\FSk}({{P}}_k({{\hbox{\rm H}}}_k(u_0), {{\hbox{\rm H}}}_k(u_1)))) =0.$ \\
\noindent (ii) $\lim_k d_{p,k}\big({{\hbox{\rm H}}}_k(u_0), {{P}}_k({{\hbox{\rm H}}}_k(u_0),{{\hbox{\rm H}}}_k(u_0)) \big)=d_p(u_0,P(u_0,u_1)).$
\end{theorem}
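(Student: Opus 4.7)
The strategy is to prove (i) first via an extremal characterization of $\FS_k$ and a squeeze, then derive (ii) from (i) using the quantum and pluripotential Pythagorean formulas \eqref{eq: Pyth_formula_quant_intr}--\eqref{eq: Pythagorean_intr} together with Theorem \ref{thm: L^p_Quant_intr}. Set $v_k := \FSk(P_k(\hbox{H}_k(u_0), \hbox{H}_k(u_1)))$ and $w := P(u_0, u_1)$.

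For part (i), the first move is to combine the simultaneous-diagonalization definition of $P_k$ with the extremal formula \eqref{eq: FSk_extremal_def} to obtain
\[
v_k(x) \;=\; \frac{1}{k}\sup\Big\{\log|s(x)|^2_{h_L^k} \,:\, s\in H^0(X,L^k),\ \hbox{H}_k(u_0)(s,s)\le 1,\ \hbox{H}_k(u_1)(s,s)\le 1\Big\}.
\]
Since $w \le u_0, u_1$ pointwise, $e^{-kw}\ge e^{-ku_i}$, so any $s$ with $\hbox{H}_k(w)(s,s)\le 1$ lies in both unit balls above; comparing suprema gives the lower bound $\FSk(\hbox{H}_k(w)) \le v_k$. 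Conversely, relaxing either constraint yields $v_k \le \FSk(\hbox{H}_k(u_i))$ for $i=0,1$, and since $v_k$ is already in $\calHo$,
\[
\FSk(\hbox{H}_k(w)) \;\le\; v_k \;\le\; P\bigl(\FSk(\hbox{H}_k(u_0)),\,\FSk(\hbox{H}_k(u_1))\bigr).
\]
Now Theorem \ref{thm: L^p_Quant_intr}(i) applied to $w\in\mathcal E^p_\o$ and to $u_0,u_1$ sends the left and right sides to $w$ in $d_p$, using $d_p$-continuity of the rooftop operator $(u,v)\mapsto P(u,v)$ (a standard property of the $L^p$ Mabuchi geometry). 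Because $d_p$ is monotone on $d_p$-comparable triples of potentials (a consequence of the Pythagorean formula in $\mathcal E^p_\o$), a squeeze argument then concludes $d_p(v_k,w)\to 0$, which is (i).

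For part (ii), apply the quantum Pythagorean formula \eqref{eq: Pyth_formula_quant_intr} with $h_i = \hbox{H}_k(u_i)$ and write $P_k$ for $P_k(\hbox{H}_k(u_0),\hbox{H}_k(u_1))$:
\[
d_{p,k}(\hbox{H}_k(u_0),\hbox{H}_k(u_1))^p = d_{p,k}(\hbox{H}_k(u_0),P_k)^p + d_{p,k}(P_k,\hbox{H}_k(u_1))^p.
\]
By Theorem \ref{thm: L^p_Quant_intr}(ii) the left side tends to $d_p(u_0,u_1)^p$, which by \eqref{eq: Pythagorean_intr} equals $d_p(u_0,w)^p + d_p(u_1,w)^p$. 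For the lower bounds on the right, I would use that $\FSk\colon(\mathcal H_k,d_{p,k})\to(\calHo,d_p)$ is $1$-Lipschitz—this follows by pulling back the explicit geodesic on $\mathcal H_k$ from Theorem \ref{thm: Hn_+_metric_geod-p-case} via $\FSk$ and comparing with the $L^p$ Finsler norm on $\calHo$, which is exactly the kind of quantized maximum principle argument flagged in the introduction. Combined with Theorem \ref{thm: L^p_Quant_intr}(i) and part (i) just proved,
\[
\liminf_k d_{p,k}(\hbox{H}_k(u_0),P_k) \;\ge\; \liminf_k d_p\bigl(\FSk(\hbox{H}_k(u_0)),\FSk(P_k)\bigr) = d_p(u_0,w),
\]
and likewise for the other summand. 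Since the sum of the $p$-th powers of these $\liminf$'s is already bounded above by $d_p(u_0,w)^p+d_p(w,u_1)^p$, both inequalities are saturated and part (ii) follows.

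\textbf{Main obstacle.} The technical crux is the $d_p$-continuity of the rooftop and the 1-Lipschitz property of $\FSk$; neither is automatic at the level of singular $\mathcal E^p_\o$ potentials. In the proof of (i), the squeeze step further relies on the fact that $v_k\in\calHo$ but the upper and lower bounding sequences live a priori only in $\mathcal E^p_\o$, so monotonicity of $d_p$ under the pointwise order—derived from the Pythagorean formula—is what allows one to conclude $d_p$-convergence from a purely pointwise envelope estimate. Writing these ingredients inside the pluripotential framework developed earlier in the paper (with the Ohsawa--Takegoshi and Berndtsson quantized maximum principle inputs used to establish Theorem \ref{thm: L^p_Quant_intr} itself) is where the bulk of the work lies.
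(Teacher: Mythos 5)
Your plan inverts the logical order of the paper's argument (proving (i) first, then (ii)) and rests on two key ingredients, both of which fail.

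\textbf{The lower bound in (i) is not established.} You write $v_k(x)$ as the supremum of $\tfrac1k\log|s(x)|^2_{h_L^k}$ over $s$ in the \emph{intersection} of the two unit balls $\{\hbox{H}_k(u_0)(s,s)\le 1\}\cap\{\hbox{H}_k(u_1)(s,s)\le 1\}$. But the extremal formula \eqref{eq: FSk_extremal_def} computes $\FSk(P_k)$ as a supremum over the \emph{unit ball of $P_k$}, which in general is strictly smaller than that intersection. (In a simultaneous-diagonalizing basis with $\hbox{H}_k(u_0)=I$, $\hbox{H}_k(u_1)=\operatorname{diag}(e^{\lambda_j})$, the $P_k$-ball is $\sum\max(1,e^{\lambda_j})|s_j|^2\le 1$, while the intersection is cut out by $\sum|s_j|^2\le1$ and $\sum e^{\lambda_j}|s_j|^2\le1$; these differ once the $\lambda_j$ have mixed signs.) So your ``equality'' is really only $\FSk(P_k)\le\sup\{\ldots\}$, and the chain $\FSk(\hbox{H}_k(w))\le\sup\{\ldots\}\ge\FSk(P_k)$ does not yield $\FSk(\hbox{H}_k(w))\le v_k$. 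Nor can you recover this by showing $\hbox{H}_k(w)\ge P_k$ in the L\"owner order: the Hermitian forms $\ge \hbox{H}_k(u_0),\hbox{H}_k(u_1)$ have no least element, and $\ge$ both does not force $\ge P_k$ (the paper explicitly says $P_k$ is the ``smallest'' upper bound only \emph{imprecisely}). Without a genuine lower bound on $v_k$, the squeeze collapses, and with it your proof of (i).

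\textbf{The claimed $1$-Lipschitz property of $\FSk$ is not available.} You assert $\FSk\colon(\mathcal H_k,d_{p,k})\to(\mathcal H_\o,d_p)$ is $1$-Lipschitz and use it to pass from $d_{p,k}(\hbox{H}_k(u_0),P_k)$ to $d_p(\FSk(\hbox{H}_k(u_0)),\FSk(P_k))$. The paper never proves this; Claim 1 in the proof of Theorem \ref{eq: Pyt_limit} only gives an \emph{asymptotic} $\limsup$ comparison, and even that is obtained by comparing tangent vectors at $t=0$ only (not the length of the pushed-forward curve), with a smooth auxiliary potential $v\ge u_0$ as pivot so that the Bergman kernel expansion applies, and with an $\varepsilon$ error that has to be sent to zero afterwards. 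A pointwise-in-$k$ Lipschitz estimate is a strictly stronger statement, and the extra Bergman kernel density $\mathrm{d}_k/k^n\to V$ makes it nontrivial to establish at finite $k$. Finally, your derivation of (ii) invokes part (i), so the gap in (i) propagates. The paper instead first proves (ii), bypassing any squeeze: it uses the quantum Pythagorean formula \eqref{eq: Pyth_formula_quant_intr} to show the two sequences $l_0^k,l_1^k$ are bounded, passes to a convergent subsequence, applies the Lidskii-type inequality (Theorem \ref{thm: mon_rev_triang_ineq}) together with the monotonicity of $\FSk$ and Lemma \ref{lem: P_stab} to get the one-sided bounds $l_0\ge d_p(u_0,P(u_0,u_1))$ and $l_1\ge d_p(u_1,P(u_0,u_1))$, and then forces equality via the classical Pythagorean formula \eqref{eq: Pythagorean_intr}; part (i) is then deduced from (ii) plus the same two claims.
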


This result is already new in the particular case of smooth potentials. Moreover, 
Theorem \ref{thm: Quant_Pyt_rooftop_intr} (ii) together with Theorem \ref{thm: L^p_Quant_intr} (ii) implies that after composing with ${{\hbox{\rm H}}}_k$, all three  expressions in the quantum Pythagorean formula \eqref{eq: Pyth_formula_quant_intr} converge to the corresponding terms of \eqref{eq: Pythagorean_intr}.

An important step in arguing this last theorem consists of quantizing (via Theorem \ref{thm: L^p_Quant_intr}) an inequality of Lidskii for Hermitian matrices,  giving a new result about the metric geometry of K\"ahler potentials (Theorem \ref{thm: mon_rev_triang_ineq}):  if $u,v,w \in \mathcal E^p_\o$ satisfy $u \geq v \geq w$,  then
$$d_p(v,w)^p \leq d_p(u,w)^p - d_p(u,v)^p.$$

To end this introduction, we remark that for concreteness, 
we only focus on the $L^p$ Finsler structures on both $\mathcal H_k$ and $\mathcal H_\o$ (with the exception of Section 3, where smooth Orlicz--Finsler metrics need to be considered for sake of approximation). However, all our results extend to the Orlicz--Finsler setting as well, as considered in \cite{Da15}. We leave it to the interested reader to adapt our arguments to that setting.

\paragraph{Acknowledgments.} Work supported by BSF grant 2016173 and NSF grants DMS-1515703, 1610202. We thank D. Coman, L. Lempert and G. Marinescu for their remarks on a preliminary version of this paper.

\section{Background and preliminary results}

To fix terminology, for the duration of the paper $(L,X)$ is a line bundle with Hermitian metric $h_L$, whose total curvature is equal to  $\o$, the background K\"ahler metric on $X$. With regards to the total volume of our class, we introduce the following recurring quantity:
$$V := \int_X \o^n.$$

\subsection{The finite-energy spaces $\mathcal E^p_\o$}

In this short subsection we recall the basics of finite energy pluripotential theory, as introduced by Guedj-Zeriahi \cite{GZ07}. For a detailed account on these matters we refer to the recent textbook \cite{GZ17}.

By $\textup{PSH}(X,\omega)$ we denote the space of $\omega$-plurisubharmonic ($\omega$-psh) functions. Extending a notion of Bedford-Taylor, Guedj-Zeriahi introduced the non-pluripolar Monge-Amp\`ere mass of a potential $u \in \textup{PSH}(X,\omega)$ as the following limit \cite{GZ07}:
$$\omega_u^n := \lim_{k \to \infty}\mathbbm{1}_{\{u > -k\}} (\omega + \i\ddbar \max(u,-k))^n.$$
For such measures one has a bound $\int_X \o_u^n \leq \int_X \o^n=V$, and $\mathcal E_\o$ is the set of potentials with full mass:
$$\mathcal E_\o = \Big\{ u \in \textup{PSH}(X,\omega) \ \textup{s.t.} \ \int_X \o_u^n = \int_X \o^n=V\Big\}.$$
Furthermore, potentials $u \in \mathcal E_\o$ that satisfy $L^p$ type integrability condition $p \geq 1$ are members of the \emph{finite-energy spaces}:
$$\mathcal E^p_\o = \Big\{ u \in \mathcal E \ \textup{s.t.} \  \int_X |u|^p \o_u^n < +\infty\Big\}.$$
The \emph{fundamental inequality} of $\mathcal E^p_\o$ is as follows:
\begin{lemma}\textup{\cite[Lemma 2.3]{GZ07}} \label{lem: fund_ineq} Suppose $u,v \in \mathcal E^p_\o$ with $u \leq v \leq 0$. Then there exists $C(p) >0$ such that:
$$\int_X |v|^p \omega_v^n \leq C \int_X |u|^p \omega_u^n.$$
\end{lemma}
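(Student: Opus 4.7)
The plan is to prove the inequality first for bounded $\o$-psh potentials and then extend to the general case via canonical approximation.

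\medskip
\noindent\textbf{Paragraph 1 (Reduction to the bounded case).} Consider the canonical truncations $u_j := \max(u,-j)$ and $v_j := \max(v,-j)$, which are bounded $\o$-psh functions with $u_j \le v_j \le 0$ and $u_j \downarrow u$, $v_j \downarrow v$ pointwise. Since $u,v \in \calE_\o$ (the full-mass class), Bedford--Taylor continuity for monotone sequences of bounded $\o$-psh functions, together with the fact that no mass of $\o_u^n$ sits on pluripolar sets, yields $\mathbbm{1}_{\{u>-j\}}\o_{u_j}^n \to \o_u^n$ weakly and hence $\int_X |u_j|^p \o_{u_j}^n \to \int_X |u|^p \o_u^n$ (and similarly for $v$). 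It therefore suffices to prove the desired inequality for bounded $u,v$, with a constant depending only on $n$ and $p$.

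\medskip
\noindent\textbf{Paragraph 2 (Bounded case, main strategy).} For bounded $u \le v \le 0$, the pointwise bound $(-v)^p \le (-u)^p$ reduces matters to comparing $\int_X (-u)^p \o_v^n$ with $\int_X (-u)^p \o_u^n$. Writing $\o_v = \o_u + \i\ddbar(v-u)$ with $\varphi := v-u \ge 0$, expand binomially
\[
\o_v^n = \sum_{k=0}^n \binom{n}{k} \o_u^{n-k}\wedge (\i\ddbar \varphi)^k,
\]
and control the $k$-th mixed term via integration by parts on the compact manifold $X$: for any bounded $\o$-psh $\psi$ and closed positive current $S$ of appropriate bidegree,
\[
\int_X \psi \cdot \i\ddbar\varphi \wedge S = \int_X \varphi \cdot \i\ddbar\psi \wedge S.
\]
Applying this with $\psi = (-u)^p$ (after a routine smooth regularization of the power function, which is only Lipschitz when $0 < p < 1$), and using
\[
\i\ddbar (-u)^p = -p(-u)^{p-1} \i\ddbar u + p(p-1)(-u)^{p-2}\,\i\del u\wedge\dbar u,
\]
together with the elementary bound $0 \le \varphi \le -u$, each derivative is transferred onto $(-u)^p$, producing two families of terms: ``Monge--Amp\`ere-type" terms involving $\o_u$, and ``gradient-type" terms involving $\i\del u \wedge \dbar u$.

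\medskip
\noindent\textbf{Paragraph 3 (Absorbing the gradient terms).} The gradient terms are controlled by the Cauchy--Schwarz inequality for positive currents, together with the identity (valid on compact $X$ for closed positive $T$)
\[
\int_X \i\del u \wedge \dbar u \wedge T = \int_X (-u)\, \i\ddbar u \wedge T \le \int_X (-u)\, \o_u \wedge T,
\]
obtained by one further integration by parts. Iterating the whole procedure $n$ times, with a joint induction on (i) the number of remaining $\i\ddbar\varphi$ factors and (ii) the number of $\i\del u \wedge \dbar u$ factors, every mixed term collapses to a constant multiple of $\int_X (-u)^p \o_u^n$, proving the bounded case with a universal $C(n,p)$.

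\medskip
\noindent\textbf{Main obstacle.} The chief technical difficulty is the multi-parameter bookkeeping in Paragraph~3: each integration by parts produces gradient terms with a \emph{lower} power $(-u)^{p-1}$ or $(-u)^{p-2}$, which must be re-expressed in terms of $(-u)^p$ using H\"older's inequality against the remaining Monge--Amp\`ere mass. Keeping track of these lower-order terms across $n$ steps of induction, while ensuring the constant remains independent of $\|u\|_\infty$ and $\|v\|_\infty$ (the latter being essential for passing to the unbounded limit in Paragraph~1), is the only delicate part of the argument.
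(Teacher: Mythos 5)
The paper does not prove this lemma itself; it is cited directly from \cite[Lemma 2.3]{GZ07}, so there is no in-paper argument to compare against. Judged on its own merits, your sketch aims at the right device (integration by parts and energy bookkeeping, as in Guedj--Zeriahi), but there is a concrete gap in Paragraph~2 that the ``main obstacle'' paragraph does not actually identify.

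When you move $\i\ddbar$ onto $(-u)^p$ and write $\i\ddbar(-u)^p = -p(-u)^{p-1}\i\ddbar u + p(p-1)(-u)^{p-2}\,\i\del u\wedge\dbar u$, you describe the outcome as a dichotomy of ``Monge--Amp\`ere-type terms involving $\o_u$'' and ``gradient-type terms''. But $-\i\ddbar u = \o - \o_u$, so the first family of terms is really $p(-u)^{p-1}(\o - \o_u)$. The $-\o_u$ piece has the favorable sign and can be dropped, but the $\o$ piece survives and, after using $\varphi\le -u$, leaves you with mixed integrals of the type $\int_X(-u)^p\,\o^j\wedge\o_u^{n-j}\,$; these are \emph{not} a priori dominated by $\int_X(-u)^p\,\o_u^n$ with a constant independent of $\|u\|_{\infty}$. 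Establishing that comparability is essentially the content of the fundamental inequality itself, so the scheme as sketched is circular. Relatedly, the binomial expansion produces terms $\o_u^{n-k}\wedge(\i\ddbar\varphi)^k$ with $k\ge 2$, which are signed; to give them meaning you must re-expand $(\i\ddbar\varphi)^k=(\o_v-\o_u)^k$, returning you to the same mixed integrals. The Guedj--Zeriahi route sidesteps both problems by proving a one-step replacement estimate, $\int_X(-u)^p\,\o_v\wedge T \le C\int_X(-u)^p\,\o_u\wedge T$ for closed positive $(n-1,n-1)$-currents $T$, via a single integration by parts to reach $\i\del u\wedge\dbar(v-u)$ and then Cauchy--Schwarz for positive currents; iterating this $n$ times replaces all the $\o_v$ factors, and no raw $\o$ factors are left dangling. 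You should restructure around such an iterated one-step estimate rather than the full binomial expansion.

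Two smaller points. The Hessian $\i\ddbar(-u)^p$ has the singular factor $(-u)^{p-2}$ near $\{u=0\}$ for all $1\le p<2$, not only for $0<p<1$ as you write, so a regularization (replace $u$ by $u-\varepsilon$ and let $\varepsilon\to 0$ at the end) is needed in exactly the range of $p$ the lemma is stated for. Also, the truncation convergence $\int_X|u_j|^p\o_{u_j}^n\to\int_X|u|^p\o_u^n$ in Paragraph~1 is a genuine result about the non-pluripolar operator (it uses locality of $\o_u^n$ on $\{u>-j\}$ and a monotone-convergence argument) and deserves more than an appeal to ``Bedford--Taylor continuity''.
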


\begin{lemma}\textup{\cite[Corollary 2.7]{GZ07}} \label{lem: uniform_est} Suppose that $u_k \in \mathcal E^p_\o$ and $u \in \textup{PSH}(X,\omega)$ such that $u_k \searrow u$. Then if $\int_X |u_k|^p\o_{u_k}^n \leq C$ for all $k$, then $u \in \mathcal E^p_\o$ and $\int_X |u|^p\o_{u}^n \leq C$. 
\end{lemma}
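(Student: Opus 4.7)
The plan is to establish the uniform bound $\int_X |u_k^M|^p \omega_{u_k^M}^n \leq C$ for the bounded truncations $u_k^M := \max(u_k, -M)$, and then pass to the limit first in $k$ with $M$ fixed, and finally in $M$. After subtracting the constant $\sup_X u_1$, I may assume WLOG that $u_k \leq 0$. Setting $u^M := \max(u, -M)$, one has $u_k^M \searrow u^M$ with both bounded $\omega$-psh.

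The key estimate is
\[
\int_X |u_k^M|^p \omega_{u_k^M}^n \leq \int_X |u_k|^p \omega_{u_k}^n \leq C,
\]
which I would prove by splitting $X = \{u_k > -M\} \sqcup \{u_k \leq -M\}$. On the open set $\{u_k > -M\}$ the identity $u_k = u_k^M$ together with locality of the non-pluripolar Monge--Amp\`ere operator yields $\mathbbm{1}_{\{u_k > -M\}} \omega_{u_k^M}^n = \mathbbm{1}_{\{u_k > -M\}} \omega_{u_k}^n$, and the contribution from this set is $\int_{\{u_k > -M\}} |u_k|^p \omega_{u_k}^n$. On the complement $|u_k^M| \equiv M$, but the mass identities $\int_X \omega_{u_k^M}^n = V = \int_X \omega_{u_k}^n$ (Stokes and $u_k \in \mathcal E_\omega$) combined with locality force $\int_{\{u_k \leq -M\}} \omega_{u_k^M}^n = \int_{\{u_k \leq -M\}} \omega_{u_k}^n$; since $M \leq |u_k|$ there, this contribution is at most $\int_{\{u_k \leq -M\}} |u_k|^p \omega_{u_k}^n$. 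Adding yields the sharp bound $C$, avoiding the factor $C(p)\cdot C$ that a na\"ive application of Lemma \ref{lem: fund_ineq} would produce.

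Fix $M$ and let $k \to \infty$: since $u_k^M \searrow u^M$ are uniformly bounded $\omega$-psh, Bedford--Taylor continuity gives $|u_k^M|^p \omega_{u_k^M}^n \to |u^M|^p \omega_{u^M}^n$ weakly on the compact manifold $X$, so testing against $1 \in C(X)$ delivers $\int_X |u^M|^p \omega_{u^M}^n \leq C$ uniformly in $M$. Finally, letting $M \to \infty$, Chebyshev's inequality gives $\int_{\{u \leq -M\}} \omega_{u^M}^n \leq CM^{-p}$, which combined with $\int_X \omega_{u^M}^n = V$ shows $\int_X \omega_u^n = V$ by the very definition of the non-pluripolar product, so $u \in \mathcal E_\omega$. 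Monotone convergence together with the locality identity $\mathbbm{1}_{\{u > -M\}}\omega_u^n = \mathbbm{1}_{\{u > -M\}}\omega_{u^M}^n$ then yields $\int_X |u|^p \omega_u^n = \sup_M \int_{\{u > -M\}} |u^M|^p \omega_{u^M}^n \leq C$, so $u \in \mathcal E^p_\omega$ with the desired bound. I expect the main obstacle to be the bookkeeping in the key estimate: one must carefully exploit locality and mass-preservation in $\mathcal E_\omega$ to preserve the constant $C$ rather than losing a factor of $C(p)$; once this is done, the two passages to the limit are routine applications of Bedford--Taylor continuity and the definition of the non-pluripolar Monge--Amp\`ere.
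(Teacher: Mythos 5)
The paper gives no proof of this lemma; it is cited directly as \cite[Corollary 2.7]{GZ07}, where Guedj--Zeriahi derive it from monotonicity of their weighted Monge--Amp\`ere energy along decreasing sequences. Your truncation argument is a valid direct proof, and its centerpiece is exactly the right idea: balancing the total masses $\int_X\omega_{u_k^M}^n=V=\int_X\omega_{u_k}^n$ against plurifine locality on $\{u_k>-M\}$ to force $\int_{\{u_k\le -M\}}\omega_{u_k^M}^n=\int_{\{u_k\le -M\}}\omega_{u_k}^n$, which gives $\int_X|u_k^M|^p\omega_{u_k^M}^n\le C$ with the \emph{same} constant, cleanly avoiding the dimensional loss that a naive application of Lemma~\ref{lem: fund_ineq} would incur. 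Three small points to tighten. (i) $\{u_k>-M\}$ is plurifine open but not open (psh functions are usc, so it is $\{u_k<c\}$ that is open); the locality you invoke is the plurifine locality of the Bedford--Taylor product, which is indeed what underlies the non-pluripolar definition, but the phrase ``open set'' should be corrected. (ii) The asserted weak convergence $|u_k^M|^p\omega_{u_k^M}^n\to|u^M|^p\omega_{u^M}^n$ is stronger than the textbook Bedford--Taylor decreasing-continuity theorem, since $|u_k^M|^p$ is not psh; it does hold, by convergence in capacity of both factors (a Xing-type theorem), but for the bound alone a self-contained route is: fix $l$, use $|u_l^M|^p\le |u_k^M|^p$ for $k\ge l$ to get $\int_X|u_l^M|^p\omega_{u_k^M}^n\le C$, pass $k\to\infty$ with the fixed quasi-continuous weight $|u_l^M|^p$, then let $l\to\infty$ by monotone convergence. (iii) The ``WLOG $u_k\le 0$'' normalization replaces $C$ in the hypothesis by a different constant, so it actually weakens the quantitative conclusion; it should simply be dropped, since your key estimate works verbatim without it — the only sign-sensitive ingredient is $M\le|u_k|$ on $\{u_k\le-M\}$, which always holds.
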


The following result is known, but we include a proof.

\begin{lemma}\label{lem: E^1_stability} Suppose $\eta$ is a K\"ahler form. If $u \in \mathcal E^p_\o$ and $u \in \textup{PSH}(X,\eta)$,  then $u \in \mathcal E^p_\eta$.
\end{lemma}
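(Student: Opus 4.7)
The plan is to reduce, via a scaling argument, to the case where $\eta$ is pointwise dominated by the background K\"ahler form, in which case monotonicity of the Monge--Amp\`ere operator makes the assertion straightforward. I may assume $u \leq 0$ throughout, and I fix $C \geq 1$ large enough that $\eta \leq C\omega$ pointwise (possible by compactness of $X$).

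First I would show $u \in \mathcal E^p_{C\omega}$. Setting $v := u/C$, the decomposition $\omega + i\ddbar v = (1-1/C)\omega + (1/C)(\omega + i\ddbar u) \geq 0$ shows $v$ is $\omega$-psh, and $u \leq v \leq 0$ because $u \leq 0$ and $C \geq 1$. The identity $(C\omega)_u = C(\omega + i\ddbar v) = C\omega_v$ then gives
\begin{equation*}
\int_X |u|^p (C\omega)_u^n = C^{n+p}\int_X |v|^p \omega_v^n \leq C^{n+p}\cdot C' \int_X |u|^p \omega_u^n < \infty,
\end{equation*}
where the inequality is Lemma \ref{lem: fund_ineq} applied to $u \leq v \leq 0$. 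The same identity gives $\int_X (C\omega)_u^n = C^n V = \int_X (C\omega)^n$, so indeed $u \in \mathcal E^p_{C\omega}$.

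Next, set $u_k := \max(u, -k)$, which is bounded and both $\eta$-psh and $(C\omega)$-psh. Since $(C\omega)_{u_k} - \eta_{u_k} = C\omega - \eta \geq 0$ and both are positive $(1,1)$-forms, the binomial expansion $(C\omega)_{u_k}^n = (\eta_{u_k} + (C\omega-\eta))^n$ yields the pointwise inequality of Borel measures $\eta_{u_k}^n \leq (C\omega)_{u_k}^n$. Combined with Lemma \ref{lem: fund_ineq} applied to $u \leq u_k \leq 0$ in the ambient form $C\omega$, this gives
\begin{equation*}
\int_X |u_k|^p \eta_{u_k}^n \leq \int_X |u_k|^p (C\omega)_{u_k}^n \leq C'' \int_X |u|^p (C\omega)_u^n,
\end{equation*}
a bound uniform in $k$ by the previous paragraph. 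Since $u_k \searrow u$ and $u \in \textup{PSH}(X,\eta)$, Lemma \ref{lem: uniform_est} with background form $\eta$ now delivers $u \in \mathcal E^p_\eta$, concluding the argument.

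The main obstacle, and the motivation for the scaling trick, is that one cannot compare $\omega_u^n$ with $\eta_u^n$ directly: even the pointwise domination $\eta \leq C\omega$ does not give $\eta_u \leq C\omega_u$, because the $i\ddbar u$ term enters both sides identically. The substitution $v = u/C$ sidesteps this by reinterpreting $(C\omega)_u$ as $C\omega_v$, trading the background-form inflation for a less singular potential in the \emph{same} background, where the fundamental inequality is available. Once $u$ is known to lie in $\mathcal E^p_{C\omega}$, the desired comparison $\eta_{u_k}^n \leq (C\omega)_{u_k}^n$ is immediate from wedge-monotonicity of positive $(1,1)$-currents.
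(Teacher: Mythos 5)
Your proof is correct and follows essentially the same route as the paper's: reduce to $\eta \leq C\omega$, use the rescaling $(C\omega)_u = C\omega_{u/C}$ together with Lemma~\ref{lem: fund_ineq} to control the $C\omega$-energy, compare $\eta_{u_k}^n \leq (C\omega)_{u_k}^n$ by wedge-monotonicity, and pass to the limit via Lemma~\ref{lem: uniform_est}. The paper carries out the rescaling and the applications of Lemma~\ref{lem: fund_ineq} entirely at the level of the bounded truncations $u_l$, which avoids even the implicit appeal to the fact that $u/C \in \mathcal E^p_\omega$ (a standard monotonicity consequence of Lemmas~\ref{lem: fund_ineq} and~\ref{lem: uniform_est}, but worth a word when you invoke Lemma~\ref{lem: fund_ineq} with $v = u/C$); otherwise the two arguments are organizationally equivalent, and your closing paragraph correctly identifies the scaling trick as the key idea.
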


\begin{proof}We can assume that $u \leq 0$ and that $\eta \leq k \o$ for some $k \in \Bbb N^*$.

From the Lemma \ref{lem: fund_ineq} it follows that $\int_X |u_l|^p \o_{u_l}^n \leq C$, where $u_l := \max(u,-l), \ l \geq 0$. Using Lemma \ref{lem: fund_ineq} again, we conclude that $u_l/k \in \mathcal E^p_\o$, more precisely, we have that $\int_X |u_l|^p (k\o + i\ddbar u_l)^n \leq C(k)$. This ultimately implies that 
$$\int_X |u_l|^p \eta_{u_l}^n \leq \int_X |u_l|^p (k\o + i\ddbar u_l)^n <C(k).$$
Using Lemma \ref{lem: uniform_est} we conclude that $u \in \mathcal E^p_\eta$.
\end{proof}

Lastly, we mention a well known convergence result from \cite{GZ07}, which is a particular case of \cite[Proposition 2.11]{Da18}:
\begin{lemma}\label{lem: limit_energy_L^p} Suppose that $u_k,u \in \mathcal E_\o^p$ and $u_k$ decreases/increases a.e. to $u$. Then $$\lim_k \int_X |u_k|^p \o_{u_k}^n = \int_X |u|^p \o_{u}^n.$$
\end{lemma}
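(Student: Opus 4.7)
The plan is to prove both the decreasing and increasing cases simultaneously by reducing to a bounded regime via the canonical truncations $v^M := \max(v,-M)$, applying Bedford--Taylor continuity there, and then controlling the truncation error using the finite-energy assumption together with Lemma \ref{lem: fund_ineq}. After normalizing so that $u_k,u \leq 0$, the scheme decomposes cleanly into three steps.

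First, for fixed $M$, the sequence $u_k^M$ is a monotone sequence of bounded $\omega$-psh functions converging a.e.\ to $u^M$, which is also bounded. Standard Bedford--Taylor theory then gives weak convergence $\omega_{u_k^M}^n \to \omega_{u^M}^n$. Since the integrands $|u_k^M|^p$ are uniformly bounded by $M^p$ and converge monotonically a.e.\ to $|u^M|^p$, one deduces $\lim_{k \to \infty}\int_X |u_k^M|^p \omega_{u_k^M}^n = \int_X |u^M|^p \omega_{u^M}^n$ by a routine upper-/lower-envelope approximation of $|\cdot|^p$ by continuous functions. Second, for any fixed $v \in \mathcal E^p_\o$ with $v \leq 0$, the key truncation identity
\[
\lim_{M \to \infty}\int_X |v^M|^p\omega_{v^M}^n = \int_X |v|^p \omega_v^n
\]
is established as follows: by the very definition of non-pluripolar Monge--Amp\`ere, $\mathbbm{1}_{\{v>-M\}}\omega_{v^M}^n$ increases (in the sense of Borel measures) to $\omega_v^n$, and since $v \equiv v^M$ on $\{v > -M\}$, monotone convergence gives $\int_{\{v>-M\}} |v^M|^p \omega_{v^M}^n \nearrow \int_X |v|^p\omega_v^n$. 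The remaining tail $\int_{\{v\leq -M\}} M^p \omega_{v^M}^n$ must be shown to vanish as $M \to \infty$; this follows by combining the uniform bound $\int_X |v^M|^p\omega_{v^M}^n \leq C \int_X |v|^p \omega_v^n$ coming from Lemma \ref{lem: fund_ineq} with the fact that the first part is already known to exhaust the total mass in the limit.

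Third, the conclusion follows from the telescoping identity
\[
\int_X |u_k|^p \omega_{u_k}^n - \int_X |u|^p \omega_u^n = A_k^M + B_k^M + C^M,
\]
where $A_k^M := \int_X |u_k|^p\omega_{u_k}^n - \int_X |u_k^M|^p \omega_{u_k^M}^n$, $B_k^M$ is the corresponding middle bracket, and $C^M := \int_X |u^M|^p\omega_{u^M}^n - \int_X |u|^p \omega_u^n$. Step one kills $B_k^M$ as $k \to \infty$ for fixed $M$; step two kills $C^M$ as $M \to \infty$. The remaining task is to prove that $\sup_k |A_k^M| \to 0$ as $M \to \infty$, which is the genuine difficulty: it is exactly step two applied \emph{uniformly} along the sequence $\{u_k\}$. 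Here monotonicity is essential. In the decreasing case $u \leq u_k$, Lemma \ref{lem: fund_ineq} yields $\sup_k \int_X |u_k|^p\omega_{u_k}^n \leq C \int_X |u|^p \omega_u^n$, and the inclusion $\{u_k \leq -M\} \subseteq \{u \leq -M\}$ together with a comparison-principle argument lets one bound the tail $\int_{\{u_k \leq -M\}} M^p \omega_{u_k^M}^n$ uniformly by a quantity that tends to $0$ with $M$. In the increasing case one first uses Lemma \ref{lem: uniform_est} (applied to the decreasing sequence $u_k^M$ for $k$ fixed and $M\to\infty$) to get a symmetric uniform estimate, and then proceeds analogously. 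The main obstacle throughout is precisely this uniformity of the truncation tail, which is the technical heart of the result and the reason why merely applying Bedford--Taylor together with Fatou's lemma is insufficient.
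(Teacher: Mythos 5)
You correctly identify the overall strategy (truncate, pass to the bounded regime via Bedford--Taylor, then control the truncation error uniformly), and this is indeed the structure behind the cited proofs in \cite{GZ07,Da18}; the paper itself does not give a self-contained argument but refers to \cite[Prop.~2.11]{Da18}. However, as written your sketch has one incorrect sub-step and one genuine gap.

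The incorrect sub-step is in your Step~2. Writing $\int_X |v^M|^p\omega_{v^M}^n = \int_{\{v>-M\}}|v|^p\omega_v^n + M^p\,\omega_{v^M}^n(\{v\leq -M\})$, you claim the tail vanishes because Lemma~\ref{lem: fund_ineq} gives $\int |v^M|^p\omega_{v^M}^n \leq C\int|v|^p\omega_v^n$ while the first summand ``exhausts the mass.'' But $C>1$ in Lemma~\ref{lem: fund_ineq}, so this only bounds the tail by $(C-1)\int|v|^p\omega_v^n$ in the limit, which is not zero. The correct and in fact simpler observation is that $\omega_{v^M}^n(\{v\leq-M\})=\omega_v^n(\{v\leq -M\})$ (both being $V$ minus the mass on $\{v>-M\}$, where the two measures agree by plurifine locality), so
\[
M^p\,\omega_{v^M}^n(\{v\leq -M\}) \;\leq\; \int_{\{v\leq -M\}}|v|^p\,\omega_v^n \;\longrightarrow\; 0
\]
by absolute continuity of the finite integral $\int|v|^p\omega_v^n$. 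No reference to Lemma~\ref{lem: fund_ineq} is needed here.

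The genuine gap is Step~3, which you yourself flag as the heart of the matter but then only gesture at. After the reduction you describe, one is left with
\[
0\;\leq\; A_k^M \;\leq\; \int_{\{u_k\leq -M\}}|u_k|^p\,\omega_{u_k}^n,
\]
and one must show this tends to $0$ as $M\to\infty$ \emph{uniformly} in $k$. In the decreasing case ($u\leq u_k$) this reduces, via $|u_k|\leq|u|$ and $\{u_k\leq-M\}\subset\{u\leq-M\}$, to bounding $\int_{\{u\leq -M\}}|u|^p\,\omega_{u_k}^n$, i.e.\ a tail of $\omega_{u_k}^n$ weighted by $|u|^p$, uniformly in terms of tails of $\omega_u^n$. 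This is exactly where a quantitative comparison-principle estimate of the type \cite[Lemma~2.2]{GZ07} (roughly, $\omega_v^n(\{u<-2s\})\leq 2^n\,\omega_u^n(\{u<-s\})$ for $u\leq v\leq 0$ in $\mathcal E$) must be invoked and then iterated over a dyadic scale to extract a uniform, summable tail. That estimate is the actual content of the lemma; merely stating ``a comparison-principle argument lets one bound the tail'' does not constitute a proof. The same issue recurs in the increasing case: applying Lemma~\ref{lem: uniform_est} to $u_k^M$ for fixed $k$ yields nothing uniform in $k$ --- the uniform tail bound (now against $u_1$, the smallest element of the increasing sequence) again requires the same comparison-principle input. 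Finally, a small point on Step~1: the passage from weak convergence $\omega_{u_k^M}^n\to\omega_{u^M}^n$ to convergence of $\int|u_k^M|^p\omega_{u_k^M}^n$ should not be phrased as an envelope approximation of $|\cdot|^p$ by \emph{continuous} functions (weak convergence against $f\circ u_k^M$ fails for merely continuous $f$ and discontinuous $u_k^M$); rather one approximates $|\cdot|^p$ uniformly on $[-M,0]$ by \emph{polynomials} and uses the full multilinear Bedford--Taylor convergence theorem term by term, or equivalently uses convergence in capacity of $u_k^M$ together with quasi-continuity.
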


\subsection{The $L^p$ Finsler geometry on the space of K\"ahler potentials}

Here we recall some of the main points on the $L^p$ Finsler geometry of the space of K\"ahler potentials. For a detailed exposition, we refer to \cite[Chapter 3]{Da18}, as well as the original articles \cite{Da17,Da15}.

As follows from the definition, the space of K\"ahler potentials $\mathcal H_\o$ is a convex open subset of $C^\infty(X)$, hence one can think of it as a trivial Fr\'echet manifold. As such, one can introduce on $\mathcal H_\o$ a collection of $L^p$ type Finsler metrics. If $u \in \mathcal H_\o$ and $\xi \in T_u \mathcal H_\o \simeq C^\infty(X)$, then the $L^p$-length of $\xi$ is given by the following expression:
$$
\| \xi\|_{p,u} = \bigg(\frac{1}{V}\int_X |\xi|^p \o_u^n\bigg)^{\frac{1}{p}}.
$$
In case $p=2$, this is the Riemannian geometry of Mabuchi \cite{Ma87}
(cf. Semmes \cite{Se92} and Donaldson \cite{Do99}).

Using these Finsler metrics, one can introduce path length metric structures $(\mathcal H_\o,d_p)$. In \cite{Da15}, the completion of these spaces was  identified with $\mathcal E^p_\o \subset \textup{PSH}(X,\o)$, and it turns out that $(\mathcal E^p_\o,d_p)$ is a complete geodesic metric space.

The geodesic segments of the completion $(\mathcal E^p_\o,d_p)$ are constructed as certain upper envelopes of quasi-psh functions as we now detail. Let $S = \{0 < \textup{Re }s < 1 \} \subset \Bbb C$ be the unit strip, and $\pi_{S \times X}: S \times X \to X$ denotes projection to the second component.

We consider $u_0,u_1 \in \mathcal E^p_\o$. We say that the curve $[0,1] \ni t \to v_t \in \mathcal E^p_\o$ is a \emph{weak subgeodesic} connecting $u_0,u_1$ if $d_p(v_t,u_{0,1}) \to 0$ as $t \to 0,1$, and the extension $v(s,x) = v_{\textup{Re }s}(x)$ is $\pi^* \o$-psh on $S \times X$, i.e.,
$$\pi^* \o + \i\partial_{S \times X} \bar \partial_{S \times X} v \geq 0, \ \textup{ as currents on } \ S \times X.$$
As shown in \cite{Da17,Da15}, a distinguished $d_p$-geodesic $[0,1] \ni t \to u_t 
\in \mathcal 
E^p_\o$ connecting $u_0,u_1$ can be obtained as the supremum of all weak subgeodesics:
\beq
\lb{fegeod}
u_t := \sup \{v_t \ | \ t \to v_t  \textup{ is a subgeodesic connecting } u_0,u_1\}, \ t \in [0,1].
\eeq

\begin{definition}
\label{feDef}
Given $u_0,u_1\in \mathcal E^p_\o$, we call \eqref{fegeod}
the $L^p$-finite-energy geodesic connecting $u_0,u_1$.
\end{definition}

In particular, the curve $t \to u_t$ naturally satisfies a maximum/comparison principle. In case the endpoints $u_0,u_1$ are smooth strictly $\omega$-psh, then the weak geodesic connecting them is actually $C^{1\bar 1}$ on $\overline{S} \times X$, as shown by 
Chen \cite{Ch00} (with complements by B\l ocki \cite{Bl12}). 

 With regards to the metric $d_p$ we have the following precise double estimate for some dimensional constant $C>1$ \cite[Theorem 3]{Da15}:
$$\frac{1}{C}d_p(u_0,u_1)^p \leq  \frac{1}{V}\int_X |u_0 - u_1|^p \o_{u_0}^n + \frac{1}{V}\int_X |u_0 - u_1|^p \o_{u_1}^n \leq C d_p(u_0,u_1)^p, \ \ \ u_0,u_1 \in \mathcal E^p_\o.$$

\subsection{The calculus of diagonalizable matrices}

In this short subsection we collect some basic facts concerning the calculus of diagonalizable matrices. Our treatment will be minimalistic and for a more thorough study we refer to Bhatia \cite{Bh97}. Denote by 
$$\Bbb D^n \subset \Bbb C^{n \times n}$$ 
the set of complex valued $n \times n$ \emph{diagonalizable matrices with real eigenvalues}. We remark that the set of Hermitian matrices is contained in $\Bbb D^n$, however we need to work with a bigger class of matrices in the sequel. 

Given a function $f: \Bbb R \to \Bbb R$  one can define an operator $f:\Bbb D^n \to \Bbb R$ in the following manner:
\begin{equation} \label{f_matrix_def}
f(A) = U \cdot \textup{diag}( f(\lambda_1), \ldots, f(\lambda_n)) \cdot U^{-1},
\end{equation}
where $\{\lambda_j\}_{j=1,...,n}$ are the eigenvalues of $A$, and the columns of the $n \times n$ matrix $U$ consist of an eigenbase of $A$.
 
By elementary means one can show that this definition does not depend on the choice of $U$. In particular, in case $f(t)$ is the polynomial $\sum_{j=0}^k a_k t^k$ we get that $f(A)= \sum_{j=0}^k a_k A^k$, as expected. This simple fact will be used multiple times below.

\paragraph{Variation of the trace under matrix functions.} Given that eigenvectors tend to misbehave under small perturbation, it is not immediately clear from the above definition how the differentiability of $f$ is reflected in the Fr\'echet differentiability of $A \to f(A)$. 
However after taking the trace of $A \to f(A)$ one ends up with a familiar looking identity, that we will need later:

\begin{proposition}\label{prop: Tr_f_deriv} Suppose that  $f \in C^1(\Bbb R)$ and $I \subset \Bbb R$ is an open interval . We introduce  $I \ni t \to A_t \in \Bbb D^n$  and Hermitian matrics $I \ni t \to h_t \in \Pn$, both smooth curves, such that each $A_t$ is $h_t$-self-adjoint.  Then $t \to {{\tr}} \big[f(A_t) \big]$ is continuously differentiable and 
\begin{equation}\label{eq: f_deriv_formula}
\frac{d}{dt}  {{\tr}} \big[f(A_t) \big]= {{\tr}} \big[f'(A_t) \cdot \dot A_t\big], \ t \in [0,1].
\end{equation}
\end{proposition}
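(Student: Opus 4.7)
The plan is to reduce to the case where $f$ is a polynomial, where the formula follows from a direct computation, and then pass to general $f \in C^1(\RR)$ by uniform polynomial approximation. The main subtlety lies in handling the uniform convergence on the right-hand side of \eqref{eq: f_deriv_formula}, where the $h_t$-self-adjointness hypothesis plays an essential role.

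For the polynomial case, by linearity it suffices to treat $f(x) = x^k$. Writing $\frac{d}{dt} A_t^k = \sum_{j=0}^{k-1} A_t^j \dot A_t A_t^{k-1-j}$ and invoking the cyclic property of the trace immediately yields $\frac{d}{dt}\tr[A_t^k] = k\tr[A_t^{k-1}\dot A_t] = \tr[f'(A_t) \dot A_t]$.

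For general $f$, I would fix a compact subinterval $I_0 \subset I$. Since $t \to A_t$ is continuous and the eigenvalues of a matrix depend continuously on its entries, their union over $t \in I_0$ lies in a compact $K \subset \RR$. Applying Weierstrass approximation to $f'$ produces polynomials $q_n \to f'$ uniformly on $K$; setting $p_n(x) := f(x_0) + \int_{x_0}^x q_n(s)\,ds$ for a fixed $x_0 \in K$ gives $p_n \to f$ and $p_n' \to f'$ uniformly on $K$. The polynomial case applied to $p_n$ yields $\frac{d}{dt}\tr[p_n(A_t)] = \tr[p_n'(A_t) \dot A_t]$, so it remains to check uniform convergence on $I_0$ of both sides to their natural limits; the standard theorem on interchanging differentiation and uniform limits then concludes the proof.

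Uniform convergence on the left, $\tr[p_n(A_t)] = \sum_i p_n(\lambda_i(t)) \to \sum_i f(\lambda_i(t)) = \tr[f(A_t)]$, is immediate. The main obstacle is the right-hand side: because $A_t$ is merely diagonalizable, one cannot directly bound the operator norm of $p_n'(A_t) - f'(A_t)$ by the sup norm of $p_n' - f'$ on $K$, since the eigenvector matrix of $A_t$ could in principle be arbitrarily ill-conditioned. This is precisely where the $h_t$-self-adjoint hypothesis is essential: taking $U_t := h_t^{-1/2}$, the conjugate $\tilde A_t := U_t^{-1} A_t U_t$ is Hermitian in the standard sense (as one verifies from $h_t A_t = A_t^* h_t$). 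Invariance of trace under conjugation gives
$$\tr\bigl[(p_n'(A_t) - f'(A_t))\dot A_t\bigr] = \tr\bigl[(p_n'(\tilde A_t) - f'(\tilde A_t))\, U_t^{-1}\dot A_t U_t\bigr],$$
where the first factor on the right is Hermitian with operator norm $\max_i |p_n'(\lambda_i(t)) - f'(\lambda_i(t))| \to 0$ uniformly on $I_0$, while $\|U_t^{-1}\dot A_t U_t\|$ stays bounded on $I_0$ since $t \to h_t$ is smooth and $\Pn$-valued. This yields the required uniform convergence; a parallel argument establishes continuity of $t \to \tr[f'(A_t) \dot A_t]$, completing the proof.
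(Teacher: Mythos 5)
Your proof is correct and takes essentially the same route as the paper's: establish the polynomial case via the cyclic trace identity, approximate $f$ and $f'$ uniformly by polynomials on a compact set containing the eigenvalues, and use the $h_t$-self-adjointness to get uniform control that lets you pass to the limit. The only cosmetic difference is the closing mechanism — the paper integrates the polynomial identity via the fundamental theorem of calculus and then applies dominated convergence, while you invoke the theorem on interchanging differentiation with uniform limits; both paths are standard and equivalent, and your explicit conjugation $\tilde A_t = h_t^{1/2} A_t h_t^{-1/2}$ to reduce to the Hermitian case makes the operator-norm bound slightly cleaner than the paper's phrasing, though it is the same underlying computation.
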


Smoothness of $t \to A_t,h_t$ simply means that the coefficients of the matrices are smooth functions of $t$. Here we say that $A\in \mathbb{D}^n$ is $h$-self-adjoint (for $h\in \Pn$) if $h\dot A$ is a Hermitian matrix. 

\begin{proof} In case $f$ is a polynomial, an elementary calculation yields \eqref{eq: f_deriv_formula}, using the identity ${{\tr}}\big[ V\cdot W \big]={{\tr}}\big[ W\cdot V \big], \ V,W \in \Bbb C^{n \times n}.$

We will now argue \eqref{eq: f_deriv_formula} for general $f \in C^1(\Bbb R)$. By the Stone-Weierstrass theorem we can find polynomials $f_k$ that approximate $f$ and $f'$ uniformly on compact subsets of $\Bbb R$. Fixing $t \in I$ and $h \in \Bbb R$ small, we have the following identity:
\begin{equation}\label{eq: f_k_id}
{{\tr}} \big[f_k(A_{t+h}) \big]-{{\tr}} \big[f_k(A_{t}) \big]= \int_t^{t+h}{{\tr}} \big[f'_k(A_l) \cdot \dot A_l\big]dl.
\end{equation}
By definition, we get that ${{\tr}} \big[f_k(A_{t+h}) \big] \to {{\tr}} \big[f(A_{t+h}) \big]$ and ${{\tr}} \big[f_k(A_{t}) \big] \to {{\tr}} \big[f(A_{t}) \big]$. 

To prove that the integral on the right hand side also converges, we will use the dominated convergence theorem, and the fact that each $A_t$ is $h_t$-self-adjoint. Indeed, for each $l \in [t,t+h]$ it is possible to find an $h_l$-unitary matrix $U_l$ such that
\begin{equation}\label{eq: diag_U_unitary}
A_l = U_l \cdot \textup{diag}(\lambda_1^l,\ldots,\lambda_n^l) \cdot {U_l}^{-1}, \ \ \ \ f'_k(A_l) = U_l\cdot \textup{diag}(f'_k(\lambda_1^l),\ldots,f_k'(\lambda_n^l)) \cdot {U_l}^{-1}.
\end{equation} 
If we write $A_t=h_l^{-1}B_l=h_l^{-1/2}(h_l^{-1/2}B_lh_{l}^{-1/2})h_l^{1/2}$, where $B_l=h_lA_l$ is a Hermitian matrix, then it is clear that the coefficients of $U_l, {U_l}^{-1}, l \in [t,t+h]$ need to be uniformly bounded. As $l \to A_l$ is continuous, we obtain that the eigenvalues of $A_l, \ l \in [t,t+h]$ are bounded too, hence so are the eigenvalues of $f'_k(A_l)$ (since $f'_k$ converges to $f'$ uniformly on compact sets of $\mathbb{R}$). Putting all these together, we obtain that ${{\tr}} \big[f'_k(A_{l}) \cdot \dot A_l \big], l \in [t,t+h]$ is uniformly bounded, hence we can use the dominated convergence theorem in \eqref{eq: f_k_id} to conclude that
$${{\tr}} \big[f(A_{t+h}) \big]-{{\tr}} \big[f(A_{t}) \big]= \int_t^{t+h}{{\tr}} \big[f'(A_l) \cdot \dot A_l\big]dl.$$
To finish the proof of \eqref{eq: f_deriv_formula}, we only have to argue that $l \to f'(A_l) \cdot \dot A_l$ is continuous on $[t,t+h]$. This is a consequence of the fact that each  
$l \to f'_k(A_l) \cdot \dot A_l$ is continuous, and $f'_k(A_l) \cdot \dot A_l$ converges uniformly to $f'(A_l) \cdot \dot A_l$ on $[t,t+h]$, as follows from $\eqref{eq: diag_U_unitary}$.
\end{proof}

\paragraph{An inequality of Lidskii.} Here we present a corollary to an inequality of Lidskii that is likely known, but we could not find a reference to it in the literature. This estimate will be used in the proof of Theorem \ref{thm: Quant_Pyt_rooftop_intr}:

\begin{theorem} \label{thm: Lidski_cor} Suppose that $A,B \in \Pn$ are such that $I_n \leq A \leq B$ and let $p \geq 1$. Then
\begin{equation}\label{eq: Lidskii_cor_est}
{{\tr}} \big[(\log A^{-1}B)^p\big]  + {{\tr}}\big[ (\log A\big)^p \big]\leq {{\tr}} \big[(\log B\big)^p\big].
\end{equation}
\end{theorem}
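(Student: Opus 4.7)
The plan is to diagonalize everything and reduce to two classical ingredients: the Lidskii--Wielandt log-majorization for products of positive definite matrices, together with the elementary scalar inequality $x^p + y^p \leq (x+y)^p$ for $x, y \geq 0$ and $p \geq 1$. The rest is bookkeeping at the level of eigenvalues.

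First I would replace $A^{-1}B$ by $\tilde{C} := A^{-1/2}BA^{-1/2}$, which is Hermitian positive definite, similar to $A^{-1}B$, and shares its spectrum. The hypothesis $I_n \leq A \leq B$ forces $\tilde{C} \geq I_n$, whence $\log A$, $\log \tilde{C}$, $\log B$ are all Hermitian and $\geq 0$. The key identity is $B = A^{1/2}\tilde{C}A^{1/2}$, so $\lambda(B) = \lambda(A\tilde{C})$ (as $A\tilde{C}$ and $A^{1/2}\tilde{C}A^{1/2}$ are similar). Let $\mu_1 \geq \cdots \geq \mu_n \geq 0$, $\sigma_1 \geq \cdots \geq \sigma_n \geq 0$, $\nu_1 \geq \cdots \geq \nu_n \geq 0$ be the eigenvalues of $\log A$, $\log \tilde{C}$, $\log B$ respectively. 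Since matrix functional calculus on $\mathbb{D}^n$ diagonalizes, $\tr[(\log A^{-1}B)^p] = \sum \sigma_i^p$, $\tr[(\log A)^p] = \sum \mu_i^p$, $\tr[(\log B)^p] = \sum \nu_i^p$, and the claim becomes
$$\sum_{i=1}^n \mu_i^p + \sum_{i=1}^n \sigma_i^p \;\leq\; \sum_{i=1}^n \nu_i^p.$$

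Next I would invoke the classical log-majorization for products of positive definite matrices (Horn/Lidskii--Wielandt; see e.g.\ Bhatia, \emph{Matrix Analysis}, Ch.~III--IV):
$$\lambda^\downarrow(A) \cdot \lambda^\uparrow(\tilde{C}) \;\prec_{\log}\; \lambda^\downarrow(A\tilde{C}).$$
Taking componentwise logarithms (which preserves the decreasing rearrangement and converts log-majorization into majorization) and using $\lambda(B) = \lambda(A\tilde{C})$, this becomes the additive majorization $(\mu_i + \sigma_{n+1-i})_{i=1}^n \prec (\nu_i)_{i=1}^n$ in $\RR^n$. Applying Hardy--Littlewood--Polya to the convex function $t \mapsto |t|^p$ (which equals $t^p$ on $[0,\infty)$, where all these eigenvalues live) yields
$$\sum_{i=1}^n (\mu_i + \sigma_{n+1-i})^p \;\leq\; \sum_{i=1}^n \nu_i^p.$$
Finally, since $\mu_i, \sigma_{n+1-i} \geq 0$ and $p \geq 1$, we have the pointwise bound $\mu_i^p + \sigma_{n+1-i}^p \leq (\mu_i + \sigma_{n+1-i})^p$. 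Summing over $i$, and noting that reindexing $\sigma_{n+1-i}$ as $\sigma_j$ does not change $\sum_j \sigma_j^p$, gives
$$\sum_i \mu_i^p + \sum_i \sigma_i^p \;=\; \sum_i \bigl(\mu_i^p + \sigma_{n+1-i}^p\bigr) \;\leq\; \sum_i (\mu_i + \sigma_{n+1-i})^p \;\leq\; \sum_i \nu_i^p,$$
which is the desired inequality.

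The only non-routine ingredient is the log-majorization, which I would cite as a black box; the rest is elementary. As a sanity check, when $A$ and $B$ commute they can be simultaneously diagonalized in an orthonormal basis with eigenvalues $a_j \leq b_j$, and the inequality collapses componentwise to $(\log a_j)^p + (\log b_j - \log a_j)^p \leq (\log b_j)^p$, i.e.\ the scalar estimate $x^p + y^p \leq (x+y)^p$ with $x = \log a_j$, $y = \log b_j - \log a_j \geq 0$. The role of the log-majorization in the non-commuting case is precisely to absorb the discrepancy between $\lambda(\log A^{-1}B)$ and the ``formal difference'' $\lambda(\log B) - \lambda(\log A)$ coming from non-commutativity, and crucially it spreads the eigenvalues of $\log B$ away from those of $\log A + \log \tilde{C}$ in the direction we need.
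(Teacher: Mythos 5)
Your proof is correct and takes essentially the same route as the paper: replace $A^{-1}B$ by the Hermitian $A^{-1/2}BA^{-1/2}$, invoke Lidskii's majorization (you state it multiplicatively as log-majorization, the paper states it additively for the logarithms of eigenvalues, but these are the same theorem), apply the majorization inequality for the convex function $t\mapsto|t|^p$, and finish with the scalar bound $a^p+b^p\le(a+b)^p$ for $a,b\ge0$ using $I_n\le A\le B$ to drop the absolute values. The only cosmetic difference is that you apply Lidskii to the pair $(A,\tilde C)$ while the paper uses $(A^{-1/2}BA^{-1/2},A)$, which gives the identical majorization after re-indexing.
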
 

This result can be viewed as a nonlinear generalization of the following elementary fact: given $a,b \in \Bbb R$ such that $0 \leq a \leq b$ the estimate $(b-a)^p +a^p \leq b^p$ holds. Before we can give the proof of this result, we need to recall basic facts about majorization of vectors and doubly stochastic matrices. In our brief presentation we will closely follow \cite[Chapter II--III]{Bh97}, and we refer the interested reader to this work for more details. 

Given $x= (x_1,\ldots,x_n) \in \Bbb R^n$ we will denote by $x^{\uparrow},x^{\downarrow} \in \Bbb R^n$ the vectors whose coordinates are obtained after rearranging the coordinates of $x$ in increasing/decreasing order respectively.

We say that $x \prec y$ if 
$$\sum_{j=1}^k x^{\downarrow}_j \leq \sum_{j=1}^k y^{\downarrow}_j, \ \ 1 \leq k \leq n-1 \ \textup{ and } \ \sum_{j=1}^n x^{\downarrow}_j = \sum_{j=1}^n y^{\downarrow}_j.$$
It is easy to see that $\prec$ is an ordering relation and we say that $x$ is \emph{majorized} by $y$ in case $x \prec y$. As a typical example, keep in mind that if $x_j \geq 0$ and $\sum_{j=1}^n x_j=1$ then 
$$\Big(\frac{1}{n}, \ldots,  \frac{1}{n}\Big) \prec (x_1,\ldots,x_n) \prec (1,0,\ldots,0).$$  
Majorization is intimately related to doubly stochastic transformations as we now point out. We say that $A \in \Bbb  R^{n \times n}$ is doubly stochastic if $a_{ij} \geq 0$ and
$$\sum_{k=1}^n a_{jk} =1, \ \ \ \sum_{k=1}^n a_{kj} =1, \ \ 1 \leq j \leq n.$$
We recall the following fundamental fact:
\begin{proposition}\textup{\cite[Theorem II.1.10]{Bh97}} For $x,y \in \Bbb R$ we have that $x \prec y$ if and only if $x = Ay$ for some doubly stochastic matrix $A \in \Bbb  R^{n \times n}$. 
\end{proposition}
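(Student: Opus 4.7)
The plan is to prove the two implications separately. The forward direction (doubly stochastic image implies majorization) is a direct computation combined with a linear programming argument, while the converse will be argued by a reduction via T-transforms (convex combinations of the identity with a single transposition).

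For the forward direction, suppose $x=Ay$ with $A$ doubly stochastic. The equality $\sum_j x_j=\sum_j y_j$ is immediate from the fact that each column of $A$ sums to $1$. For the partial sum inequalities, I would use the variational characterization
\[
\sum_{j=1}^k x^{\downarrow}_j \;=\; \max_{|S|=k}\sum_{j\in S} x_j.
\]
Fixing an optimal $S$ and writing $\sum_{j\in S}x_j=\sum_l c_l\, y_l$ with $c_l:=\sum_{j\in S}a_{jl}$, the doubly stochastic property forces $0\le c_l\le 1$ and $\sum_l c_l=k$. The linear functional $c\mapsto \sum_l c_l y_l$ constrained to this polytope is maximized at an extreme point, i.e.\ by placing mass $1$ on the $k$ indices corresponding to the largest entries of $y$; this yields exactly $\sum_{j=1}^k y^{\downarrow}_j$, proving $x\prec y$.

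For the converse, assume $x\prec y$ and, rearranging, take $x=x^{\downarrow}$ and $y=y^{\downarrow}$. I would argue by induction on the number of indices where $x$ and $y$ differ. If $x=y$, take $A=I$. Otherwise, choose the smallest index $j$ with $y_j>x_j$ and, since $\sum_i x_i=\sum_i y_i$, let $k>j$ be the smallest index with $y_k<x_k$. Pick $t\in(0,1]$ minimal such that the vector $y':=(1-t)y+tP_{jk}y$ (where $P_{jk}$ is the transposition permutation matrix swapping coordinates $j$ and $k$) satisfies either $y'_j=x_j$ or $y'_k=x_k$. The matrix $T_t:=(1-t)I+tP_{jk}$ is doubly stochastic. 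One verifies that $y'$ is still sorted in weakly decreasing order, that $x\prec y'$ remains valid (the relevant partial sums either agree with those of $y$ or decrease but remain $\ge$ those of $x$ by the choice of $t$), and that the number of agreements between $x$ and $y'$ has strictly increased. Iterating at most $n-1$ times produces $x=T_{t_m}\cdots T_{t_1}y$, and the product of doubly stochastic matrices is doubly stochastic.

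The main obstacle is verifying that each T-transform preserves majorization while strictly improving the overlap, since this requires a careful check that the partial sums $\sum_{i=1}^l y'^{\downarrow}_i$ behave correctly for $l$ in the ranges $l<j$, $j\le l<k$, and $l\ge k$. A less hands-on alternative would be to cite the Birkhoff--von Neumann theorem (doubly stochastic matrices are convex hulls of permutation matrices) and appeal to the Hardy--Littlewood--P\'olya theorem characterizing majorization via $\sum \phi(x_i)\le \sum \phi(y_i)$ for all convex $\phi$; but the T-transform approach is more elementary and produces the doubly stochastic $A$ explicitly, which is preferable for the applications to Lidskii-type inequalities that follow.
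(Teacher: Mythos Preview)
The paper does not prove this proposition; it is stated with a citation to Bhatia \cite[Theorem II.1.10]{Bh97} and used as a black box. Your argument is the classical Hardy--Littlewood--P\'olya $T$-transform proof, which is essentially what appears in the cited reference, so there is nothing in the paper to compare against and your approach is the expected one.

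One small correction: your claim that $y'$ remains sorted after a single $T$-transform is not true as stated. For example, with $x=(7,7,4)$ and $y=(10,8,0)$ one gets $j=1$, $k=3$, and the minimal $t$ gives $y'=(7,8,3)$, which is not decreasing. The standard fix is either to re-sort after each step (a permutation matrix is doubly stochastic, and majorization is defined via the sorted rearrangements anyway), or to choose the pair $(j,k)$ more carefully (e.g.\ take $j$ maximal with $y_j>x_j$ and then $k>j$ minimal with $y_k<x_k$, which forces $y'_j\ge y_{j+1}$ since the intermediate coordinates already agree with $x$). Either way the induction goes through and the product of doubly stochastic matrices is doubly stochastic, so your overall strategy is correct.
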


Using this we argue the next elementary lemma:
\begin{lemma}\label{lem: convex_est_Lidskii} \textup{\cite[Theorem II.3.1]{Bh97}} Let $x, y \in \Bbb R^n$ and $\varphi: \Bbb R \to \Bbb R$ convex. Then $x \prec y$ implies that 
$$\sum_{j=1}^n \varphi(x_j) \leq \sum_{j=1}^n \varphi(y_j).$$
\end{lemma}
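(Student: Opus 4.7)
The plan is to combine the characterization of majorization by doubly stochastic matrices (the Proposition stated just before the Lemma) with Birkhoff's theorem, which asserts that the set of doubly stochastic matrices is the convex hull of the permutation matrices. Once this structural fact is in hand, convexity of $\varphi$ finishes the proof in essentially one line, with the crucial observation being that permutations leave the sum $\sum_j \varphi(y_j)$ invariant.

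More precisely, assume $x \prec y$. By the quoted Proposition, there exists a doubly stochastic matrix $A \in \mathbb{R}^{n \times n}$ with $x = A y$. By Birkhoff's theorem, we may write $A = \sum_{k=1}^N t_k P_{\sigma_k}$, where $t_k \geq 0$, $\sum_k t_k = 1$, and each $P_{\sigma_k}$ is the permutation matrix associated to some $\sigma_k \in S_n$. Hence for every index $j$,
\[
x_j = \sum_{k=1}^N t_k\, y_{\sigma_k(j)}.
\]

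Applying Jensen's inequality (i.e., the definition of convexity for finite convex combinations) to $\varphi$ yields
\[
\varphi(x_j) \;\leq\; \sum_{k=1}^N t_k\, \varphi\bigl(y_{\sigma_k(j)}\bigr).
\]
Summing over $j = 1, \dots, n$ and interchanging the order of summation gives
\[
\sum_{j=1}^n \varphi(x_j) \;\leq\; \sum_{k=1}^N t_k \sum_{j=1}^n \varphi\bigl(y_{\sigma_k(j)}\bigr) \;=\; \sum_{k=1}^N t_k \sum_{j=1}^n \varphi(y_j) \;=\; \sum_{j=1}^n \varphi(y_j),
\]
where the middle equality uses that each $\sigma_k$ is a bijection of $\{1, \ldots, n\}$, and the last equality uses $\sum_k t_k = 1$. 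This is the claimed inequality.

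The only nontrivial ingredient is Birkhoff's theorem on doubly stochastic matrices, which is classical (a short proof proceeds by induction on the number of nonzero entries, using Hall's marriage theorem to extract a permutation support). I therefore do not expect any genuine obstacle here: given the preceding Proposition in the excerpt, the argument is essentially a formal consequence of convexity plus the permutation-decomposition of doubly stochastic matrices.
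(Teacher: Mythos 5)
Your proof is correct, but it takes a slightly heavier route than the paper. Both arguments begin identically, invoking the preceding Proposition to write $x = Ay$ for a doubly stochastic $A$. You then bring in Birkhoff's theorem to decompose $A$ as a convex combination of permutation matrices $\sum_k t_k P_{\sigma_k}$, apply Jensen to that decomposition, and use the permutation-invariance of $\sum_j \varphi(y_j)$. The paper skips Birkhoff entirely: it applies convexity directly to the expression $x_j = \sum_k a_{jk} y_k$, using that each row of $A$ sums to $1$ (so the $a_{jk}$ are already convex weights), obtaining $\varphi(x_j) \le \sum_k a_{jk}\varphi(y_k)$; then summing over $j$ and using that each \emph{column} also sums to $1$ gives $\sum_j \varphi(x_j) \le \sum_{j,k} a_{jk}\varphi(y_k) = \sum_k \varphi(y_k)$. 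The paper's proof is thus a one-line Jensen argument relying only on the defining row/column sum conditions, whereas yours trades that for the structural clarity of the permutation decomposition at the cost of invoking Birkhoff's theorem (itself requiring Hall's marriage theorem or an extreme-point argument). Both are fine; the paper's version is the more economical of the two for this statement.
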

\begin{proof} The relation $x \prec y$ implies that $x = Ay$ for a doubly stochastic matrix $A$. In particular, due to the convexity of $\varphi$, the following estimate holds:
\begin{flalign*}
\sum_{j=1}^n \varphi(x_j) & = \sum_{j=1}^n \varphi \left (\sum_{k=1}^n a_{jk} y_k \right)  \leq \sum_{j,k} a_{jk} \varphi( y_k) = \sum_{k=1}^n \varphi( y_k).  
\end{flalign*}
\end{proof}

Lastly, before proving Theorem \ref{thm: Lidski_cor}, we recall the following inequality of Lidskii:

\begin{theorem}\textup{\cite[Corollary III.4.6]{Bh97}}\label{thm: Lidskii} Given $A,B \in \Pn$, all the eigenvalues of $AB \in \Bbb C^{n \times n}$ are positive and 
$$\log \big(\lambda ^{\downarrow} A\big) + \log \big(\lambda ^{\uparrow} B\big) \prec \log \lambda (AB).$$
\end{theorem}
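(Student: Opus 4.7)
The plan is to reduce the matrix inequality to a scalar sum inequality via Theorem \ref{thm: Lidskii}, and then bridge the remaining gap by pointwise superadditivity of $t\mapsto t^p$. I would first extract the sign information encoded by the hypotheses. The condition $I_n \leq A$ forces $\lambda_j(A)\geq 1$, so $\log \lambda_j(A) \geq 0$; similarly $\log \lambda_j(B) \geq 0$. For the third matrix, conjugating $A \leq B$ by $A^{-1/2}$ yields $I_n \leq A^{-1/2}BA^{-1/2}$, and since $A^{-1}B$ is similar to $A^{-1/2}BA^{-1/2}$ (via $A^{1/2}(A^{-1}B)A^{-1/2} = A^{-1/2}BA^{-1/2}$), its eigenvalues are $\geq 1$ as well, hence $\log\lambda_j(A^{-1}B)\geq 0$. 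By the formula (\ref{f_matrix_def}) all three traces then equal sums of $p$-th powers of non-negative reals, placing us in the regime where $t\mapsto t^p$ is both convex and superadditive.

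Next I would apply Theorem \ref{thm: Lidskii} to the pair $A$ and $C := A^{-1/2}BA^{-1/2}\in \Pn$. The product $AC = A^{1/2}BA^{-1/2}$ is similar to $B$, so $\lambda(AC) = \lambda(B)$, while from the similarity above $\lambda(C) = \lambda(A^{-1}B)$. Lidskii's majorization therefore specializes to
$$\log \lambda^{\downarrow}(A) + \log \lambda^{\uparrow}(A^{-1}B) \prec \log \lambda(B).$$
Applying Lemma \ref{lem: convex_est_Lidskii} with the convex weight $\varphi(t) = (\max(t,0))^p$ (which agrees with $t^p$ on $[0,\infty)$ and is convex on $\mathbb{R}$ for $p\geq 1$), and using that all coordinates on both sides of the majorization are non-negative, yields
$$\sum_{j=1}^n \Big(\log \lambda_j^{\downarrow}(A) + \log \lambda_j^{\uparrow}(A^{-1}B)\Big)^p \leq \sum_{j=1}^n (\log \lambda_j(B))^p = \tr[(\log B)^p].$$

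Finally I would invoke the scalar inequality $a^p+b^p \leq (a+b)^p$ for $a,b\geq 0$ and $p\geq 1$ (which follows from the fact that $t\mapsto (a+t)^p - t^p$ is non-decreasing on $[0,\infty)$). Applied with $a = \log\lambda_j^{\downarrow}(A)$ and $b = \log\lambda_j^{\uparrow}(A^{-1}B)$ and summed over $j$, this gives
$$\sum_{j=1}^n (\log \lambda_j(A))^p + \sum_{j=1}^n (\log \lambda_j(A^{-1}B))^p \leq \sum_{j=1}^n \Big(\log \lambda_j^{\downarrow}(A) + \log \lambda_j^{\uparrow}(A^{-1}B)\Big)^p,$$
where reordering eigenvalues is harmless since we only sum. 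Combining the two displayed inequalities and recognizing the left-hand side as $\tr[(\log A)^p] + \tr[(\log A^{-1}B)^p]$ closes the argument.

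The step that requires the most care is the choice $C = A^{-1/2}BA^{-1/2}$ in Lidskii: one has to arrange simultaneously that $\lambda(AC) = \lambda(B)$ and $\lambda(C) = \lambda(A^{-1}B)$, which is precisely how the logarithmic identity $\log B = \log A + \log(A^{-1}B)$ (valid only in the commutative case) gets replaced by a majorization in the non-commutative setting. Once this algebraic arrangement is in place, the remaining ingredients---convexity of $t^p$ and superadditivity $a^p+b^p\leq (a+b)^p$---are entirely elementary.
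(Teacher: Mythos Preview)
Your write-up does not prove the stated theorem. The statement you were asked to prove is Lidskii's majorization $\log\lambda^{\downarrow}(A)+\log\lambda^{\uparrow}(B)\prec\log\lambda(AB)$ for $A,B\in\mathcal P_n$, which the paper simply cites from \cite[Corollary III.4.6]{Bh97} without proof. Your argument, however, \emph{invokes} Theorem~\ref{thm: Lidskii} as a tool (``apply Theorem~\ref{thm: Lidskii} to the pair $A$ and $C:=A^{-1/2}BA^{-1/2}$''), so it cannot be a proof of that same theorem. You have also silently imported the hypotheses $I_n\le A\le B$, which are not part of the stated result.

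What you have actually written is a correct proof of Theorem~\ref{thm: Lidski_cor}, the corollary asserting $\tr[(\log A^{-1}B)^p]+\tr[(\log A)^p]\le\tr[(\log B)^p]$ when $I_n\le A\le B$. As such it matches the paper's own argument essentially line for line: both apply Lidskii to the pair $(A,\,A^{-1/2}BA^{-1/2})$ (up to swapping the two factors, which is immaterial since the roles of $\downarrow$ and $\uparrow$ simply interchange), then feed the resulting majorization into Lemma~\ref{lem: convex_est_Lidskii} with a convex $p$-th power weight, and finish with the superadditivity $a^p+b^p\le(a+b)^p$ on $[0,\infty)$. The only cosmetic difference is your choice $\varphi(t)=(\max(t,0))^p$ in place of the paper's $\varphi(t)=|t|^p$; both agree on the non-negative vectors in play. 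So the content is fine, but it is attached to the wrong theorem.
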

In the above statetement $\lambda (C)$ is simply the vector containing the eigenvectors of a matrix $C$. Also, for any function $f: \Bbb R \to \Bbb R$ and vector $v=(v_1,\ldots,v_n) \in \Bbb R^n$, by $f(v) \in \Bbb R^n$ we understand the vector $(f(v_1),\ldots,f(v_n))$.  
\begin{proof}[Proof of Theorem \ref{thm: Lidski_cor}] Notice that \eqref{eq: Lidskii_cor_est} is equivalent to
\begin{equation}\label{eq: Lidskii_cor_est_modified}
{{\tr}} \big[\big(\log A^{-\frac{1}{2}}B A^{-\frac{1}{2}}\big)^p\big]  + {{\tr}} \big[\big(\log A\big)^p\big]\leq {{\tr}}\big[ \big( \log B\big)^p\big].
\end{equation}
Indeed, since $A^{-1}B = A^{-\frac{1}{2}} \big(A^{-\frac{1}{2}}B A^{-\frac{1}{2}}\big)A^{\frac{1}{2}}$, it follows that $A^{-1}B$ and $A^{-\frac{1}{2}}B A^{-\frac{1}{2}}$ have the same real postive eigenvalues. 
To argue this estimate we turn to Lidskii's inequality. Indeed, Theorem \ref{thm: Lidskii} implies that 
$$\log \lambda ^{\downarrow} \big(A^{-\frac{1}{2}}B A^{-\frac{1}{2}}\big) + \log \lambda ^{\uparrow} \big(A\big) \prec \log \lambda (A^{-\frac{1}{2}}B A^{\frac{1}{2}})=\log \lambda (B).$$
Applying Lemma \ref{lem: convex_est_Lidskii} with $\varphi(t) = |t|^p$ to this estimate we obtain that
$$\sum_{j=1}^n\Big|\log \lambda ^{\downarrow}_j  \big( A^{-\frac{1}{2}}B A^{-\frac{1}{2}}\big) + \log \lambda^{\uparrow}_j \big(A\big)\Big|^p \leq  \sum_{j=1}^n \big|\log \lambda_j (B )\big|^p.$$
Since $I_n \leq A^{-\frac{1}{2}}B A^{-\frac{1}{2}}$, and also $I_n \leq A\leq B$, all the eigenvalues involved in the above estimate are greather than $1$, allowing us to get rid of the absolute values and to finish the proof \eqref{eq: Lidskii_cor_est_modified} in the following manner:
\begin{flalign*}
{{\tr}} \big[(\log( A^{-\frac{1}{2}}B A^{-\frac{1}{2}})\big)^p \big]  + {{\tr}} \big[(\log A\big)^p \big]& = \sum_{j=1}^n \Big[\left (\log \lambda ^{\downarrow}_j  \big( A^{-\frac{1}{2}}B A^{-\frac{1}{2}}\big) \right)^p + \left (\log \lambda^{\uparrow}_j (A)\right)^p  \Big] \\ 
&\leq \sum_{j=1}^n \Big[\log \lambda ^{\downarrow}_j  \big( A^{-\frac{1}{2}}B A^{-\frac{1}{2}}\big) + \log \lambda^{\uparrow}_j (A) \Big]^p \\
& \leq   \sum_{j=1}^n \left (\log \lambda_j (B)\right)^p = {{\tr}}\big[(\log(B))^p \big],
\end{flalign*}
where in the first inequality we used that $a^p + b^p \leq (a+b)^p$, for any $a,b\geq 0$.
\end{proof}

\subsection{The Ohsawa--Takegoshi extension theorem and the quantized maximum principle}

We recall a particular case of the Ohsawa--Takegoshi theorem \cite{OT87} that we will use in this article. This result has been extended in many different directions (see \cite{Dem12,BL14} and references therein), yet we could not find an exact reference to the version below, that is well known to experts. As a courtesy to the reader we present a simple proof  using only the classical the Ohsawa--Takegoshi theorem in $\Bbb C^n$ and the classical H\"ormander estimates. For a statement involving smooth metrics we refer to \cite[Proposition 8.8]{BeKe12}, and for more recent results that can be obtained using techniques of similar nature we refer to \cite[Section 5]{CM15} and \cite[Section 3]{CMM17}.

\begin{theorem}\label{thm: OhsTak} Suppose $u \in \textup{PSH}(X,\o)$ such that $\o_u \geq \varepsilon \o$ for some $\varepsilon >0$.  There exists $k_0:=k_0(X,\omega,\varepsilon)$ and $C:=C(X,n,\o)>0$ such that for any  $x \in X$ and $k \geq k_0$ there exists $s \in H^0(X,L^k)$ satisfying $s(x) \neq 0$ and
\begin{equation}\label{eq: OT_ineq}
\int_X h_L^k(s,s)e^{-ku}\o^n \leq C h_L^k(s(x),s(x))e^{-ku(x)}.
\end{equation}
\end{theorem}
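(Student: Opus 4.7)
The plan is to use the standard trilogy: localize, extend via Ohsawa--Takegoshi, then globalize via H\"ormander. First, fix a holomorphic coordinate chart $(U,z)$ centered at $x$ and a holomorphic frame $e$ of $L|_U$, with $h_L(e,e)=e^{-\phi}$ for a smooth local potential $\phi$ of $\o$. On a coordinate ball $B=B(x,2r)\subset U$ the local potential $u+\phi$ is psh with $\i\ddbar(u+\phi)\geq \eps\, \o$. Applying the classical Ohsawa--Takegoshi theorem in $\CC^n$ (iterated to extend from a single point in dimensions $n\ge 2$), one produces a holomorphic function $f$ on $B$ with $f(x)=1$ and
$$\int_B |f|^2 e^{-k(u+\phi)}\, dV \leq C_1\, e^{-k(u(x)+\phi(x))}, \qquad (\ast)$$
where $C_1$ depends only on $r$ and $n$.

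Next, choose a cutoff $\chi\in C_c^\infty(B)$ with $\chi\equiv 1$ on $B(x,r)$ so that $s_0:=\chi f\cdot e^{\otimes k}$ extends to a smooth global section of $L^k$ with $\dbar s_0=(\dbar\chi)f\cdot e^{\otimes k}$ supported in the annulus $\{r\le |z-x|\le 2r\}$. Construct a global quasi-psh weight $\psi$ on $X$ with $\psi(z)=(n+\delta)\log|z-x|^2$ near $x$ for a small $\delta>0$, $\psi\le 0$ globally, and $\i\ddbar\psi\ge -C\o$. Apply H\"ormander's $L^2$-estimate on $L^k$ with weight $e^{-ku-\psi}$: the curvature condition $k\o_u+\i\ddbar\psi+\Ric(\o)\ge \o$ holds for $k\ge k_0$ large enough (using $\o_u\ge\eps\o$), yielding a smooth $g$ with $\dbar g=\dbar s_0$ and
$$\int_X |g|_{h_L^k}^2\, e^{-ku-\psi}\,\o^n \leq C_2 \int_X |\dbar\chi|_\o^2 |f|^2 e^{-k(u+\phi)-\psi}\,\o^n.$$
Since $\psi$ and $|\dbar\chi|$ are uniformly bounded on the annulus where $\dbar\chi$ is supported, $(\ast)$ bounds the right-hand side by a constant multiple of $e^{-k(u(x)+\phi(x))}$. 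Moreover the logarithmic pole of $\psi$ at $x$ forces the vanishing $g(x)=0$.

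Setting $s:=s_0-g$ yields a global holomorphic section of $L^k$ with $s(x)=e^{\otimes k}(x)\ne 0$. The desired estimate \eqref{eq: OT_ineq} then follows from $|s|^2\le 2|s_0|^2+2|g|^2$, combining $(\ast)$ for the $s_0$ term with the H\"ormander bound for $g$ and the pointwise inequality $e^{-ku}\le e^{-ku-\psi}$ (valid since $\psi\le 0$).

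The main obstacle is securing uniformity of all constants as $x$ varies over $X$ and $k\to\infty$. Uniformity in $x$ follows from compactness of $X$ by covering $X$ by finitely many charts of uniform radius with uniform control of $\phi$, $\psi$, and $\chi$; uniformity in $k$ is automatic once the positivity threshold $k_0$ is fixed, since the constants $C_1, C_2$ are $k$-independent. A secondary technical point is the construction of a global weight $\psi$ with the required log pole and curvature lower bound, which is a standard regularization argument (e.g., gluing $(n+\delta)\log|z-x|^2$ in the chart with a smooth extension).
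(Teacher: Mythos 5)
Your proof follows the same localize--Ohsawa-Takegoshi--H\"ormander strategy as the paper's: extend a weighted peak function from a coordinate ball, glue with a cutoff, then correct with a $\dbar$-solution forced to vanish at $x$ by a logarithmic singular weight. The only differences are cosmetic (you use exponent $n+\delta$ in the log weight where the paper takes the borderline exponent $n$, and you carry the $\Ric\o$ term explicitly), so the argument is correct and essentially identical to the paper's.
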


\begin{proof}
We fix $x \in X$, and we choose a coordinate ball $B(0,1)$ centered at $x$ that trivializes $L$. By the classical Ohsawa--Takegoshi theorem (see
 \cite[Theorem 3.1]{BL14} for a sharp version), there exists $C':=C'(n)>0$  such that for all for $k \in \Bbb N$ there exists a section $f \in H^0(B(0,1),L^k)$ satisfying $f(0) \neq 0$ and 
\begin{equation}\label{eq: OT_ineq1}
\int_{B(0,1/2)} h_L^k(f,f)e^{-ku} dV \leq C' h_L^k(f(x),f(x))e^{-ku(x)},
\end{equation}
where $dV$ is the Euclidean volume. Next we choose a cutoff function $\eta$, compactly supported inside $B(0,1/2)$, that is identically equal to $1$ on $B(0,1/4)$. The function $z \mapsto \psi(z):=2n \eta(z) \log \|z\|$ is  quasi-plurisubharmonic on $X$ and for $A$ large enough one has $A\omega +i\ddbar \psi  \geq 0$. Since $X$ is compact one can choose a uniform constant $A$ independent of $x$.  Using the classical H\"ormander estimates \cite[Theorem VIII.4.5]{Dem12} 
for $k \geq k_0(\varepsilon,\omega,X)$ we can find $\chi$, a smooth section of $L^k$, such that $\bar \partial \chi = \bar \partial \eta \wedge f$ on $X$ and 
\begin{flalign}\label{eq: OT_ineq2}
\int_X h_L^k(\chi,\chi) & e^{-ku} \o^n   \leq  \int_X h_L^k(\chi,\chi) e^{-ku - 2n\eta \log\|z\|} \o^n \nonumber \\
& \leq \frac{1}{2(\varepsilon k-A)} \int_X \o \otimes h^k  (\bar \partial \eta \wedge f,\bar \partial \eta \wedge f) e^{-ku-2n\eta \log\|z\|} \o^n \nonumber \\
& \leq \frac{C''(\omega,X)}{2(\varepsilon k-A)} \int_{B(0,\frac{1}{2})} h^k (f,f) e^{-ku} dV\nonumber \\ 
&\leq  \int_{B(0,\frac{1}{2})} h^k (f,f) e^{-ku} dV.
\end{flalign}
This integrability condition guarantees that $\chi(x)=0$, and $s:=\eta f - \chi \in H^0(X,L^k)$ by construction. Estimate \eqref{eq: OT_ineq} follows for $s$, after putting together \eqref{eq: OT_ineq1} and \eqref{eq: OT_ineq2}. 
\end{proof}

Next we recall the ``quantized" maximum principle of Berndtsson \cite[Proposition 3.1]{Bern09}. To state this result, 
for an ample line bundle $(E,g) \to X$ we consider  $\mathcal H_{E \otimes K_X}$, the space of Hermitian forms on the vector space of sections $H^0(X,E \otimes K_X)$. 
For $\eta := \Theta(g)=-i\ddbar \log g >0$, we will also adjust the Hilbert map $\mathcal E^p_\eta \to \mathcal H_{E \times K_X}$ in the following manner:
$$
\Hilb_{E \otimes K_X}(v)(s,s) := \int_X g(s,s) e^{-v}, \ \ s \in H^0(X,E \otimes K_X).
$$
As a distinguishing feature of this setup, we note that no choice of volume form is needed to define the right hand side above.

\begin{proposition}\label{prop: quant_max_princ} Let $(E,g) \to X$ be an ample line bundle over $X$, and $\eta = \Theta(g)>0$. Let $[0,1] \in t \to v_t \in \mathcal E_\eta^p$ be a weak subgeodesic connecting $v_0,v_1 \in \mathcal E^p_\eta$. We introduce $V_{v_t} := \Hilb_{E \otimes K_X}(v_t)$, and by 
$$[0,1] \ni t \to V_t \in \mathcal H_{E \otimes K_X}$$ we denote the geodesic connecting $V_0,V_1 \in \mathcal H_{E \otimes K_X}$, solving \eqref{eq: goed_eq_intr}.  If $V_0 \leq V_{v_0}$ and $V_1 \leq V_{v_1}$ then $V_t \leq V_{v_t}, \ t \in [0,1].$
\end{proposition}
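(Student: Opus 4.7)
My plan is to reduce the matrix inequality $V_t \leq V_{v_t}$ to a scalar inequality $V_t(s,s) \leq V_{v_t}(s,s)$ for each fixed $s \in H^0(X, E \otimes K_X)$, obtained from opposite convexity properties of the two sides.

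First I would write down the geodesic $V_t$ explicitly. Choose a basis $\{e_j\}$ of $H^0(X, E \otimes K_X)$ that is $V_0$-orthonormal and simultaneously diagonalizes $V_1$, with $V_1 = \operatorname{diag}(e^{\lambda_1}, \ldots, e^{\lambda_N})$. The solution of (\ref{eq: goed_eq_intr}) is then $V_t = \operatorname{diag}(e^{\lambda_1 t}, \ldots, e^{\lambda_N t})$, and for any $s = \sum c_j e_j$,
\begin{equation*}
V_t(s,s) = \sum_{j=1}^N |c_j|^2 e^{\lambda_j t}.
\end{equation*}
Since the log-sum-exp of a nonnegative combination of real exponentials is convex in its argument, the function $t \mapsto \log V_t(s,s)$ is convex on $[0,1]$.

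Next I would prove the complementary statement: for each fixed $s$, the function $t \mapsto \log V_{v_t}(s,s)$ is concave on $[0,1]$. The subgeodesic hypothesis says $v$ is $\pi^*\eta$-plurisubharmonic on $S \times X$, which equips the pulled back line bundle $\pi_1^* E$ with the singular weight $\pi_1^* g \cdot e^{-v}$ of nonnegative curvature. By Berndtsson's positivity of direct images, the $L^2$-metric $\tau \mapsto V_{v_\tau}$ on the trivial vector bundle $S \times H^0(X, E \otimes K_X)$ has nonnegative Griffiths curvature, so $-\log V_{v_\tau}(s,s)$ is subharmonic in $\tau$ for the constant section $s$. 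Since $v_\tau$ depends only on $\operatorname{Re}\tau$, this subharmonicity collapses to concavity of $\log V_{v_t}(s,s)$ in $t$. To cover the case of a genuinely singular weak subgeodesic, I would apply Berndtsson to the bounded decreasing approximation $v^{(k)} := \max(v,-k)$, which remains $\pi^*\eta$-psh on $S \times X$; monotone convergence gives $V_{v_t^{(k)}}(s,s) \nearrow V_{v_t}(s,s)$, and an increasing pointwise limit of concave functions is again concave, so the limiting function $t \mapsto \log V_{v_t}(s,s)$ is concave.

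With these two ingredients in hand, the proposition follows by a chord argument. For each fixed $s$, convexity from the first step gives $\log V_t(s,s) \leq (1-t)\log V_0(s,s) + t \log V_1(s,s)$; the endpoint hypotheses $V_0 \leq V_{v_0}$ and $V_1 \leq V_{v_1}$ raise the right-hand side to $(1-t)\log V_{v_0}(s,s) + t \log V_{v_1}(s,s)$; and concavity from the second step dominates this chord by $\log V_{v_t}(s,s)$. Chaining the three inequalities yields $V_t(s,s) \leq V_{v_t}(s,s)$ for all $s$, hence $V_t \leq V_{v_t}$ as Hermitian forms. I expect the main obstacle to lie in the Berndtsson/Prekopa step: justifying that positivity of direct images, stated most cleanly for smooth or bounded psh weights, genuinely transfers through the $\max(v,-k)$ approximation, which in turn rests on the finiteness of $V_{v_t}(s,s)$ along the whole segment---guaranteed here by the fact that elements of $\mathcal E^p_\eta$ have zero Lelong numbers.
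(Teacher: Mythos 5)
The core of your argument — the chord comparison between a convex function $t\mapsto\log V_t(s,s)$ and a concave function $t\mapsto\log V_{v_t}(s,s)$ — is a nice idea, but the concavity claim in your second step is not correct, and this breaks the proof.

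You assert that Griffiths positivity of the direct-image metric $\tau\mapsto V_{v_\tau}$ implies that $-\log V_{v_\tau}(s,s)$ is subharmonic for a \emph{constant} section $s$. That is not what Griffiths positivity gives. Griffiths positivity of $(F,h)$ is equivalent to Griffiths negativity of $(F^*,h^*)$, i.e.\ to the plurisubharmonicity of $\log\|\xi\|^2_{h^*}$ for holomorphic sections $\xi$ of the \emph{dual}. Plurisubharmonicity of $-\log\|s\|^2_h$ for holomorphic sections $s$ of $F$ itself is the definition of Griffiths \emph{negativity} of $F$, which is the opposite condition. Concretely, in a trivialization with metric matrix $H(\tau)$, Griffiths semipositivity over a one-dimensional base reads $H_{\bar\tau}H^{-1}H_\tau\geq H_{\tau\bar\tau}$, and for $f(\tau)=s^*H(\tau)s$ one computes
$$f\,f_{\tau\bar\tau}-|f_\tau|^2 \ \leq\ f\cdot (H_\tau s)^*H^{-1}(H_\tau s)\ -\ |s^*H_\tau s|^2.$$
The right-hand side is \emph{nonnegative} by Cauchy--Schwarz, not nonpositive, so no sign conclusion for $\partial_\tau\bar\partial_\tau\log f$ follows. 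In fact, taking $H(\tau)=I+\tau A+\bar\tau A^*+|\tau|^2A^*A$ (Griffiths semipositive at $\tau=0$) and $s$ with $As$ not parallel to $s$, one gets $f f_{\tau\bar\tau}-|f_\tau|^2>0$ at $\tau=0$, so $\log f$ is strictly subharmonic there. Superharmonicity of $\log V_{v_t}(s,s)$ for all $s$ would actually amount to Griffiths negativity of the direct image along the curve, which contradicts Berndtsson's theorem except in rank one. Equivalently: the matrix ``supersolution'' inequality $\ddot V_{v_t}\leq \dot V_{v_t}V_{v_t}^{-1}\dot V_{v_t}$ that positivity of direct images does supply does \emph{not} decouple into per-vector scalar concavity of $t\mapsto\log V_{v_t}(s,s)$.

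Because the concavity is false, the chord argument does not establish $V_t(s,s)\leq V_{v_t}(s,s)$, and the proof as written collapses. The underlying difficulty is that a matrix maximum principle cannot be reduced to fixed-vector scalar comparisons; the positivity of $V_{v_t}$ has to be combined with the full geodesic equation for $V_t$ in a genuinely matrix-valued way, which is precisely what Berndtsson's Proposition 3.1 in \cite{Bern09} accomplishes. The paper's own proof sidesteps this entirely: it approximates $v_0,v_1$ by smooth decreasing sequences $v_0^l,v_1^l\in\mathcal H_\eta$, replaces the subgeodesic by the $C^{1\bar1}$ geodesics $v^l_t$ dominating it, applies Berndtsson's Proposition 3.1 as a black box to these smooth geodesics, and then passes to the limit using monotonicity of $\Hilb_{E\otimes K_X}$, endpoint stability of solutions of \eqref{eq: goed_eq_intr}, and the finite-dimensional maximum principle (\cite[Lemma 8.11]{BeKe12}). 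To repair your argument you would either need to cite Berndtsson's result rather than re-derive it by a chord argument, or find the correct matrix-valued comparison argument. Your observation about the $\max(v,-k)$ truncation is a reasonable regularization step, but it is downstream of the gap above, and the paper handles the regularization slightly differently (via smooth decreasing sequences together with the comparison principle for geodesics).
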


Since the original result is only stated for equal boundary values and for subgeodesics $t \to v_t$  that are $C^1$, we give a brief argument showing how the result generalizes to finite-energy subgeodesics and only comparable boundary data.
\begin{proof} We consider $v^l_0,v^l_1 \in \mathcal H_\eta$ decreasing to $v_0,v_1$ respectively. If $t \to v^l_t$ are the $C^{1\bar 1}$-geodesics joining $v^l_0,v^l_1$ then by the comparison principle $v_t\leq v^l_t$. Since $\Hilb_{E \otimes K_X}$ is monotone decreasing it follows that 
$$V_{v^l_t} := \Hilb_{E \otimes K_X}(v_t^l) \leq V_{v_t}.$$  
We apply Berndtsson's maximum principle \cite[Proposition 3.1]{Bern09} to $t \to v_t^l$ and conclude that 
$$V_t^l \leq V_{v^l_t} \leq V_{v_t}, \ t \in [0,1],$$
where $[0,1] \ni t \to V_t^l \in \mathcal H_{E \otimes K_X}$ is the finite-dimensional geodesic joining $V_{v_0^l}$ and $V_{v_1^l}$.

Since solutions to the geodesic equation \eqref{eq: goed_eq_intr} are endpoint stable, we get that $V_t^l \to W_t$ as $l \to \infty$, where $t \to W_t$ is the geodesic joining $V_{v_0}$ and $V_{v_1}$. This gives $W_t \leq V_{v_t}$.

Since $V_0 \leq V_{v_0}=W_0$ and $V_1 \leq V_{v_1}=W_1$, by the well known maximum principle of the finite-dimensional geodesics (see \cite[Chapter 8, Lemma 8.11]{BeKe12}) we conclude that $V_t \leq W_t$, finishing the argument.
\end{proof}

In this work we will apply the above proposition for the line bundle $E_k := L^k \otimes K_X^*$. Notice that for high enough $k>0$ this bundle is necessarily ample, and $H^0(X,E_k \otimes K_X)=H^0(X,L^k)$. We fix $\o^n$ to be the background metric on $K_X^*$. In fact, with the notation of the proposition we have $\eta_k := \Theta(h^k \otimes \o^n) =k \o + \Ric \o>0$. 

For $u \in \mathcal E^p_{\frac{1}{k}\eta_k} \cap \mathcal E^p_{\o}$, we end up having the following dictionary between our two different notions of Hilbert maps:
$$
{{\hbox{\rm H}}}_k(u)=\Hilb_{E_k \otimes K_X}(ku).
$$
With this identity in hand, we state the following corollary of Proposition \ref{prop: quant_max_princ} that applies for high powers of our ample line bundle $L \to X$, \emph{without} any additional twisting:
\begin{corollary} \label{cor: quant_max_princ} Suppose that $(L,h) \to X$ is ample with $\omega := \Theta(h)>0$, and let $\varepsilon >0$. Suppose that $[0,1] \ni t \to v_t \in \mathcal E_\o^p$ is a weak subgeodesic connecting $v_0,v_1 \in \mathcal E^p_\o$ that satisfies
\begin{equation}\label{eq: geod_strict_pos}
\pi_{S \times X}^* \o + \i\partial_{S \times X} \bar \partial_{S \times X} v \geq \varepsilon \pi_{S \times X}^* \o.
\end{equation}
We introduce $V^k_{v_t} := {{\hbox{\rm H}}}_k(v_t)$, and by 
$[0,1] \ni t \to V^k_t \in \mathcal H_k$
we denote the geodesic connecting $V^k_0,V^k_1 \in \mathcal H_k$, solving \eqref{eq: goed_eq_intr}. Then  there exists $k_0(\varepsilon)>0$ such that for all $k \geq k_0$ the following hold: if $V^k_0 \leq V^k_{v_0}$ and $V^k_1 \leq V^k_{v_1}$ then $V^k_t \leq V^k_{v_t}, \ t \in [0,1].$
\end{corollary}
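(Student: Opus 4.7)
The plan is to reduce to Proposition \ref{prop: quant_max_princ} applied to the twisted ample bundle $E_k := L^k \otimes K_X^{-1}$ equipped with the metric $h^k \otimes \o^n$, whose curvature $\eta_k = k\o + \Ric\,\o$ is K\"ahler for $k \geq k_0$ depending only on $\o$. Since $E_k \otimes K_X = L^k$, the spaces $\mathcal H_{E_k \otimes K_X}$ and $\mathcal H_k$ coincide and the geodesic equation \eqref{eq: goed_eq_intr} is the same in both settings, while the identity ${{\hbox{\rm H}}}_k(u) = \Hilb_{E_k \otimes K_X}(ku)$ noted before the corollary translates the hypotheses $V^k_j \leq V^k_{v_j}$ and the desired conclusion into the statements of Proposition \ref{prop: quant_max_princ} applied to the rescaled curve $t \mapsto k v_t$.

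The main content is then to check that $t \mapsto k v_t$ is a weak $\eta_k$-subgeodesic in $\mathcal E^p_{\eta_k}$. The strict positivity hypothesis \eqref{eq: geod_strict_pos} yields on $S \times X$
\begin{equation*}
\pi_{S \times X}^* \eta_k + i\partial_{S \times X} \bar\partial_{S \times X}(kv) = k\bigl(\pi_{S \times X}^*\o + i\partial_{S \times X} \bar\partial_{S \times X} v\bigr) + \pi_{S \times X}^* \Ric\,\o \geq k\varepsilon\, \pi_{S \times X}^*\o + \pi_{S \times X}^* \Ric\,\o,
\end{equation*}
which is nonnegative as soon as $k\varepsilon \geq \sup_X |\Ric\,\o|_\o$, giving $\pi_{S \times X}^*\eta_k$-plurisubharmonicity of $kv$ for $k \geq k_0(\varepsilon)$. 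The same slicewise computation shows each $v_t$ is $(k^{-1}\eta_k)$-plurisubharmonic, so Lemma \ref{lem: E^1_stability} applied with $\eta = k^{-1}\eta_k$ gives $v_t \in \mathcal E^p_{k^{-1}\eta_k}$, equivalently $kv_t \in \mathcal E^p_{\eta_k}$. Invoking Proposition \ref{prop: quant_max_princ} for this subgeodesic and the bundle $E_k$ then delivers $V^k_t \leq V^k_{v_t}$ via the dictionary above.

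The step I expect to be the main technical obstacle is verifying the endpoint continuity clause $d_{p,\eta_k}(k v_t, k v_{0,1}) \to 0$ that is built into the definition of weak subgeodesic for $\eta_k$, since only the $\o$-version of this convergence is provided by hypothesis. I would handle this using the double estimate for $d_p$ at the end of Section 2.2 together with the binomial expansion of $(\eta_k)^n_{kv_t}$ as a sum of wedge products of $\o_{v_t}$ and $\Ric\,\o$ type forms: uniform $L^p$-boundedness of these mixed measures tested against $|v_t - v_0|^p$ follows from Lemma \ref{lem: fund_ineq} combined with the $\o$-endpoint continuity, yielding the required $\eta_k$-version. Alternatively, one may retrace the approximation step in the proof of Proposition \ref{prop: quant_max_princ}, where the comparison between $kv_t$ and the $C^{1\bar 1}$-geodesics $w^l_t$ connecting $\mathcal H_{\eta_k}$-approximants of $kv_{0,1}$ uses only the $\pi_{S \times X}^*\eta_k$-plurisubharmonicity of the extension, circumventing the endpoint issue altogether.
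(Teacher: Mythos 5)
Your proposal follows essentially the same route as the paper's proof: reduce to Proposition~\ref{prop: quant_max_princ} via the twisted bundle $E_k = L^k \otimes K_X^{-1}$ using the dictionary ${{\hbox{\rm H}}}_k(u)=\Hilb_{E_k\otimes K_X}(ku)$; verify $\pi^*\eta_k$-plurisubharmonicity of $kv$ from the $\varepsilon$-positivity hypothesis for $k\geq k_0(\varepsilon)$; and apply Lemma~\ref{lem: E^1_stability} slicewise to get $v_t\in\mathcal E^p_{k^{-1}\eta_k}$. The paper's argument is identical in structure and detail.

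The one place you go beyond the paper is in flagging the endpoint $d_{p,\eta_k}$-continuity of $t\mapsto kv_t$, which the definition of weak subgeodesic formally requires and which the paper's proof silently omits. Your instinct here is sound, but your first proposed fix (binomial expansion plus Lemma~\ref{lem: fund_ineq}) is heavier than needed. The strict positivity restricted to slices gives $\o_{v_t}\geq\varepsilon\o$, and choosing $C_0$ with $-C_0\o\leq\Ric\o\leq C_0\o$ yields the pointwise current inequality $0\leq(k^{-1}\eta_k)_{v_t}=\o_{v_t}+k^{-1}\Ric\o\leq(1+C_0/(k\varepsilon))\o_{v_t}$ for $k\geq k_0(\varepsilon)$; this dominates the corresponding non-pluripolar Monge--Amp\`ere measures, and the double estimate at the end of Section~2.2 then transfers the $\o$-endpoint continuity directly. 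Your second alternative is in fact the cleanest observation and likely reflects the authors' implicit reasoning: the decreasing-approximation argument in Proposition~\ref{prop: quant_max_princ} invokes the comparison principle, which only needs the $\pi^*\eta_k$-psh property of the extension and the boundary comparison, not endpoint $d_p$-continuity of the subgeodesic, so the extra clause in the definition is not actually exercised.
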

\begin{proof}
By the discussion preceding the corollary, we have to show existence of $k_0(\varepsilon)>0$ such that $v_t \in \mathcal E^p_{\frac{1}{k}\eta_k}, \ t \in [0,1]$ and that $t \to kv_t$ is a weak $\eta_k$-subgeodesic for all $k \geq k_0$. 

These will follow from the positivity condition \eqref{eq: geod_strict_pos}. Indeed, we immediately see that 
$$\pi^* \eta_k + \i\partial_{S \times X} \bar \partial_{S \times X} kv \geq \varepsilon k \pi^* \o + \pi^* \textup{Ric }\o.$$
Consequently, for $k \geq k_0(\varepsilon)$ the curve $t \to kv_t$ is indeed a weak $\eta_k$-subgeodesic. We fix $t \in [0,1]$. By restricting \eqref{eq: geod_strict_pos} to the appropriate $X$ fiber, we see that 
$$\frac{1}{k}\eta_k + i\ddbar v_t = \o_{v_t} + \frac{1}{k}\textup{Ric} \ \o \geq \frac{1}{k}\textup{Ric} \ \o +\varepsilon \o.$$ 
Consequently, for all $k \geq k_0(\varepsilon)$ we have that $v_t \in \textup{PSH}(X,\frac{1}{k}\eta_k)$. Lemma \ref{lem: E^1_stability} immediately gives that $v_t \in \mathcal E^p_{\frac{1}{k}\eta_k}$, finishing the argument. 
\end{proof}

As in \cite{Bern09}, in case $V_0^k=V^k_{v_0}$ and $V_1^k=V^k_{v_1}$ we would like to compare the derivatives of $t \to V_t^k$ and $t \to V_{v_t}^k$ at the endpoints. However we do not know if the map $t \to {{\hbox{\rm H}}}_k(v_t)$ is differentiable in case $t \to v_t$ is only a weak subgeodesic. To overcome this difficulty, we prove the following partial result that will suffice in later investigations:

\begin{lemma}\label{lem: quant_max_princ_deriv} Suppose that in the setting of Corollary \ref{cor: quant_max_princ} we have that   $t \to v_t$ is increasing, and $V^k_0 = V^k_{v_0}$ along with $V^k_1 = V^k_{v_1}$. Then 
$$-k \int_X  \dot v_1 h_L^k(s,s) e^{-k v_1}\omega^n \leq \dot V_1^k(s,s), \ \ s \in H^0(X,L^k).$$
\end{lemma}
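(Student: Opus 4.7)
The plan is to combine the maximum principle of Corollary \ref{cor: quant_max_princ} with a comparison of left derivatives at $t=1$. First, the corollary gives $V_t^k \leq V_{v_t}^k$ as Hermitian forms for all $t\in[0,1]$. Since $V_1^k = V_{v_1}^k$, evaluating on $s\in H^0(X,L^k)$ and dividing by $t-1<0$ for $t<1$ reverses the inequality, and taking $t\to 1^-$ on the smooth geodesic side yields
$$\dot V_1^k(s,s) \;\geq\; \limsup_{t\to 1^-}\frac{V_{v_t}^k(s,s)-V_{v_1}^k(s,s)}{t-1} \;=\; -\liminf_{t\to 1^-}\int_X h_L^k(s,s)\,\psi_t\,\omega^n,$$
where $\psi_t := (e^{-kv_t}-e^{-kv_1})/(1-t)\geq 0$. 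It will thus suffice to show that $\int_X h_L^k(s,s)\psi_t\,\omega^n \to k\int_X h_L^k(s,s)e^{-kv_1}\dot v_1\,\omega^n$ as $t\to 1^-$, with $\dot v_1(x)$ interpreted as the pointwise left derivative at $t=1$.

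I would then exploit convexity: since $t\mapsto v_t(x)$ is convex (from $v$ being $\pi^*\omega$-psh on $S\times X$) and nondecreasing by hypothesis, the left derivative $\dot v_1(x)\in[0,+\infty]$ exists pointwise and $(v_1-v_t)/(1-t)$ is nondecreasing in $t$ by the three-slope inequality. Tangent-line inequalities for the convex function $\phi(s)=e^{-ks}$ applied at $v_1$ and at $v_t$ produce the sandwich
$$k\,e^{-kv_1}\,\frac{v_1-v_t}{1-t} \;\leq\; \psi_t \;\leq\; k\,e^{-kv_t}\,\frac{v_1-v_t}{1-t},$$
with both sides converging pointwise to $ke^{-kv_1}\dot v_1$. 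Monotone convergence applied to the (nondecreasing in $t$) lower bound immediately gives
$$\liminf_{t\to 1^-}\int_X h_L^k(s,s)\psi_t\,\omega^n \;\geq\; k\int_X h_L^k(s,s)\,e^{-kv_1}\,\dot v_1\,\omega^n.$$

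For the matching upper bound I would first assume without loss of generality that the right-hand side of the claim is finite (otherwise the inequality is vacuous), so that $\int_X h_L^k(s,s)e^{-kv_1}\dot v_1\,\omega^n<\infty$. Monotone convergence in $t_0$ shows that $t_0\mapsto \int_X h_L^k(s,s)e^{-kv_{t_0}}\dot v_1\,\omega^n$ is decreasing to this finite limit, so one may fix $t_0\in(0,1)$ where this quantity is finite. On $[t_0,1]$ the sandwich upper bound combined with $e^{-kv_t}\leq e^{-kv_{t_0}}$ yields the integrable dominator $ke^{-kv_{t_0}}\dot v_1\geq\psi_t$, and dominated convergence furnishes the matching $\limsup\leq$, completing the argument.

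The step I expect to be most delicate is this final dominated-convergence argument, since $\dot v_1$ is only a pointwise left derivative possibly attaining $+\infty$ on a pluripolar set, and $v_t$ is a weak subgeodesic rather than a classical one; the truncation to $t\in[t_0,1]$ is essential because the global bound $\psi_t\leq ke^{-kv_0}\dot v_1$ may fail to be integrable. As a robust alternative, one could approximate $v_0, v_1$ by decreasing sequences of smooth strictly $\omega$-psh potentials, use the $C^{1\bar 1}$ weak geodesic construction of Chen--B\l ocki to reduce to the smooth case (where differentiation under the integral is immediate), and pass to the limit using stability of the finite-dimensional geodesic equation \eqref{eq: goed_eq_intr} in its endpoints.
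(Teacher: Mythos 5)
Your opening moves are correct and match the paper in spirit: applying Corollary~\ref{cor: quant_max_princ} directly, comparing divided differences at $t=1$, and exploiting the convexity of $t\mapsto v_t$ (so that $(v_1-v_t)/(1-t)$ increases to the left derivative $\dot v_1$, and the sandwich $ke^{-kv_1}\tfrac{v_1-v_t}{1-t}\le\psi_t\le ke^{-kv_t}\tfrac{v_1-v_t}{1-t}$ holds) are all fine. Since only the upper bound $\limsup_t\int h_L^k(s,s)\psi_t\,\omega^n\le k\int h_L^k(s,s)e^{-kv_1}\dot v_1\,\omega^n$ is needed, the whole argument comes down to justifying a dominated convergence theorem, and that is exactly where your proof has a genuine gap.

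You need to find a $t_0<1$ with $\int_X h_L^k(s,s)e^{-kv_{t_0}}\dot v_1\,\omega^n<\infty$, and you assert this follows by ``monotone convergence in $t_0$'' from the assumed finiteness of $\int_X h_L^k(s,s)e^{-kv_1}\dot v_1\,\omega^n$. That inference is false: a decreasing net of nonnegative measurable functions can have infinite integral at every stage and still converge downward to an integrable limit (take, e.g., $f_n(x)=\max(1/x-n,0)$ on $(0,1)$, which decreases to $0$ yet $\int f_n=\infty$ for all $n$). In the present setting nothing prevents $e^{-kv_{t_0}}\dot v_1$ from failing to be integrable for \emph{every} $t_0<1$, even when the limiting quantity at $t_0=1$ is finite. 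Your ``robust alternative'' (approximate $v_0,v_1$ from above by smooth potentials) does not obviously repair this either: replacing the endpoints changes the subgeodesic and in particular changes $\dot v_1$, and there is no clear way to relate the left-derivative of the approximating $C^{1\bar1}$ geodesic at $t=1$ to $\dot v_1$ in the limit.

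The paper closes this gap by truncating the \emph{subgeodesic} rather than the time interval or the endpoints. It sets $v_t^C:=\max\bigl(v_t,\,v_1-C(1-t)\bigr)$, which remains a weak subgeodesic with $v_1^C=v_1$, and now by construction $0\le \dot v_1^C\le C$ and $e^{-kv_t^C}\le e^{kC}e^{-kv_1}$. The dominator becomes $kCe^{kC}\,h_L^k(s,s)e^{-kv_1}$, which is integrable because $\mathrm{H}_k(v_1)(s,s)<\infty$, so dominated convergence is legitimate and gives $\dot V_1^{k,C}(s,s)\ge -k\int h_L^k(s,s)\dot v_1^Ce^{-kv_1}\omega^n\ge -k\int h_L^k(s,s)\dot v_1 e^{-kv_1}\omega^n$, the last step using the pointwise inequality $\dot v_1^C\le\dot v_1$ (no limit in $C$ inside the integral is needed). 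Finally one lets $C\to\infty$ only on the finite-dimensional side, where endpoint stability of solutions of \eqref{eq: goed_eq_intr} gives $\dot V_1^{k,C}\to\dot V_1^k$. Incorporating that truncation---and otherwise keeping your comparison of divided differences---would turn your sketch into a complete proof.
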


Note that we do not rule out the possibility that the expression on the left hand side might equal $-\infty$ for some $s \in H^0(X,L^k)$. 

\begin{proof}  For $C> 0$ we introduce $[0,1] \ni t \to v^C_t:=\max(v_t, v_1 - C(1-t)) \in \mathcal E^p_{\omega}$. It is easy to see that this curve is still a weak subgeodesic with $v^C_1 = v_1$, such that $$0 \leq \dot v^C_t \leq C, \ t \in [0,1] \ \ \ \ \ \textup{ and } \ \ \ \ \ 
\pi_{S \times X}^* \o + i\partial_{S \times X} \bar \partial_{S \times X} v^C \geq \varepsilon \pi_{S \times X}^* \o.
$$ 
Corollary \ref{cor: quant_max_princ} applies to $t \to v_{t}^C$ and we get that 
\begin{equation}\label{eq: v^C_comp}
V^{k,C}_t \leq {{\hbox{\rm H}}}_k(v^C_t), \ t \in [0,1], \ k \geq k_0,
\end{equation}
where $[0,1] \ni t \to V^{k,C}_t \in \mathcal H_k$ is the geodesic connecting $V^k_{v_{0}^C}$ (which is smaller than $V^k_{v_0}$) and $V^k_{v_1}$. In particular, using \eqref{eq: v^C_comp}, we can compare derivatives at $t=1$ in the following manner: 
\begin{flalign*}
\dot V^{k,C}_1(s,s) & \geq \limsup_{t\to 1^-} \frac{{{\hbox{\rm H}}}_k(v_1)(s,s)-{{\hbox{\rm H}}}_k(v_t^C)(s,s)}{1-t} \\
& \geq \limsup_{t\to 1^-} \frac{\int_X h_L^k(s,s) (e^{-k v_1}-e^{-k v_t^C}) \omega^n}{1-t}\\
& \geq \limsup_{t\to 1^-}  \frac{-k \int_X h_L^k(s,s)  (v_1-v_t^C) e^{-k v_t^C } \omega^n}{1-t},
\end{flalign*}
where in the last line we have used that $e^{x}\geq 1+ x$.
Since $(v_1^C -v_t^C)/(1-t)$ is bounded and $e^{-kv_{t}^C}\leq e^{-k v_1 +kC}$  it follows from the dominated convergence theorem that 
\begin{flalign*}
\dot V^{k,C}_1(s,s) \geq -k \int_X h_L^k(s,s) \dot{v}_1^C e^{-kv_1} \omega^n \geq -k \int_X h_L^k(s,s) \dot{v}_1 e^{-kv_1} \omega^n, \ s \in H^0(X,L^k).
\end{flalign*}
Since solutions to the geodesic equation \eqref{eq: goed_eq_intr} are endpoint stable, we have that $V^{k,C}_t  \to V^{k}_t$ as $C \to +\infty$. Since $V^{k,C}_1 = V^{k}_1$, at $t=1$ we also have convergence of tangent vectors: $\dot V^{k,C}_1 \to \dot V^{k}_1$, hence the conclusion.
\end{proof}

\subsection{Approximation of finite-energy potentials from below}

In this section we show that any potential $u \in \mathcal E^p_\o$ can be approximated from below in a concrete manner by an increasing sequence of strongly $\o$-psh potentials.

\begin{proposition}\label{prop: E_p_approx_below} Suppose $u \in \mathcal E^p_\o$ with $u \leq -1$. Then for any $\delta \geq 1$ we have that $P(\delta u) \in \mathcal E^p_\o$. Moreover $P(\delta u) \nearrow u$ as $\delta \searrow 1$. In particular, $d_p(\frac{1}{\delta}P(\delta u),u) \to 0$ and $d_p(P(\delta u),u) \to 0$ as $\delta \searrow 1$.
\end{proposition}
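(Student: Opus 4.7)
My plan is to establish the conclusions in the stated order: (a) $P(\delta u)\leq u$ together with monotonicity in $\delta$, (b) membership $P(\delta u)\in \mathcal{E}^p_\omega$, (c) pointwise convergence $P(\delta u)\nearrow u$, and (d) the two $d_p$-convergences. Since $u\leq -1$ and $\delta\geq 1$ give $\delta u\leq u\leq 0$ pointwise, every $\omega$-psh function bounded above by $\delta u$ is also bounded above by $u$, so $P(\delta u)\leq P(u)=u$, and $P(\delta u)\in \mathrm{PSH}(X,\omega)$ after the standard USC regularization. The candidate class shrinks as $\delta$ grows, so $\delta\mapsto P(\delta u)$ is pointwise decreasing in $\delta\geq 1$, and $u_\star:=\lim_{\delta\searrow 1}P(\delta u)\leq u$ exists pointwise as an $\omega$-psh function.

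I would then reduce to bounded potentials via the truncations $u_N:=\max(u,-N)\in \mathrm{PSH}(X,\omega)\cap L^\infty$ with $-N\leq u_N\leq -1$. Because $(\delta-1)|u_N|\leq (\delta-1)N$, the function $u_N-(\delta-1)N$ is $\omega$-psh and lies pointwise below $\delta u_N$, giving the sandwich
\[
u_N-(\delta-1)N\;\leq\; P(\delta u_N)\;\leq\; \delta u_N\;\leq\; u_N,
\]
so $P(\delta u_N)$ is bounded $\omega$-psh, converges uniformly to $u_N$ as $\delta\searrow 1$, and hence lies in $\mathcal{E}^p_\omega$. Monotonicity of the envelope in its upper constraint yields $P(\delta u_N)\searrow P(\delta u)$ as $N\to\infty$ for each fixed $\delta\geq 1$.

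To establish $P(\delta u)\in \mathcal{E}^p_\omega$, I would estimate $\int_X|P(\delta u_N)|^p\omega_{P(\delta u_N)}^n$ uniformly in $N$. Using that $\omega_{P(\delta u_N)}^n$ concentrates on the contact set $\{P(\delta u_N)=\delta u_N\}$, where $|P(\delta u_N)|=\delta|u_N|\leq \delta|u|$, together with the fundamental inequality (Lemma \ref{lem: fund_ineq}) applied to the bounded pair $P(\delta u_N)\leq u_N\leq 0$, one obtains $\int_X|P(\delta u_N)|^p\omega_{P(\delta u_N)}^n\leq C(p,\delta)\int_X|u|^p\omega_u^n$, after which Lemma \ref{lem: uniform_est} delivers $P(\delta u)\in \mathcal{E}^p_\omega$. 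For the identification $u_\star=u$, I would argue by monotone stability of the envelope: since $P(\delta u_N)\nearrow u_N$ uniformly in the bounded setting, and the parameters $N$ and $\delta$ enter monotonically, a careful commutation of limits---exploiting the zero-Lelong-number hypothesis inherited from $u\in \mathcal{E}^p_\omega$---should yield $u_\star=u$. Once this is known, $P(\delta u)\nearrow u$ in $\mathcal{E}^p_\omega$ with uniformly bounded $p$-energies, so Lemma \ref{lem: limit_energy_L^p} gives convergence of $\int_X|\cdot|^p\omega_\cdot^n$, and the two-sided $d_p$-estimate recalled in \S2.2 upgrades this to $d_p(P(\delta u),u)\to 0$. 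The convergence $d_p(\delta^{-1}P(\delta u),u)\to 0$ then follows since $\delta^{-1}\to 1$ and scaling by a factor tending to one is $d_p$-continuous on $\mathcal{E}^p_\omega$.

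The main obstacle is step (c): the bounded-case lower bound $u_N-(\delta-1)N$ is useless as $N\to\infty$, so identifying $u_\star$ with $u$ requires more than raw monotonicity---essentially a minimax-type equality $\sup_{\delta>1}\inf_N P(\delta u_N)=\inf_N\sup_{\delta>1}P(\delta u_N)$, which is not automatic and will have to genuinely use the finite-energy structure (in particular, that $u$ has zero Lelong numbers) to ensure that $\delta u$ for $\delta$ near $1$ admits $\omega$-psh minorants hugging $u$ arbitrarily closely.
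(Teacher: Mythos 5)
Your plan for (a), (b), and (d) is sound and, in substance, follows the same contact-set strategy as the paper's proof for membership (the paper approximates $u$ by smooth $u_j\in\mathcal H_\omega$ and cites the $C^{1\bar1}$ regularity and contact-set characterization of $\omega_{P(\delta u_j)}^n$ from Berman, whereas you use the truncations $u_N=\max(u,-N)$, but the energy estimate $\int|P(\delta\,\cdot\,)|^p\omega_{P(\delta\,\cdot\,)}^n\leq\delta^{n+p}\int|u|^p\omega_u^n$ comes out the same). One small slip: you invoke Lemma~\ref{lem: fund_ineq} ``applied to the bounded pair $P(\delta u_N)\leq u_N\leq 0$,'' but that direction gives an upper bound on the energy of $u_N$, not of $P(\delta u_N)$; what you actually need (and implicitly use, given your contact-set reduction) is the pair $u\leq u_N\leq 0$, which bounds $\int|u_N|^p\omega_{u_N}^n$ by $\int|u|^p\omega_u^n$.

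The genuine gap is exactly the one you flag: you have not proved that $u_\star:=\lim_{\delta\searrow1}P(\delta u)$ equals $u$. The sandwich $u_N-(\delta-1)N\leq P(\delta u_N)$ collapses as $N\to\infty$, and ``a careful commutation of limits using zero Lelong numbers'' is not an argument. The paper closes this step with a quite different idea: pass the measure inequality $\omega_{P(\delta u_j)}^n\leq\delta^n\omega_{u_j}^n$ to the limit in $j$ and then in $\delta$ to get $\omega_{u_\star}^n\leq\omega_u^n$; since both $u_\star$ and $u$ lie in $\mathcal E_\omega$ and therefore have total mass $V$, this inequality of measures must be an equality, and Dinew's uniqueness theorem for full-mass Monge--Amp\`ere equations forces $u_\star=u+c$ for some constant $c\leq0$; finally, letting $\delta\searrow1$ in the energy bound $\int|P(\delta u)|^p\omega_{P(\delta u)}^n\leq\delta^{n+p}\int|u|^p\omega_u^n$ and invoking Lemma~\ref{lem: limit_energy_L^p} shows $c=0$. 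Without something of this nature --- a uniqueness input for the Monge--Amp\`ere operator on $\mathcal E_\omega$, not just monotonicity bookkeeping --- your identification $u_\star=u$ does not go through, and the $d_p$-convergences in (d) (which you correctly reduce to this pointwise statement via \cite[Proposition 4.9]{Da15} or the two-sided estimate) remain unproved.
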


In fact, the condition $u \leq -1$ can be removed, however we will only use this result in the above form.

\begin{proof}Let $u_j \in \mathcal H_{\omega}$ be a decreasing sequence of negative potentials such that $u_j \searrow u$. Fix $\delta >1$. It is well known that $P(\delta u_j) \in \textup{PSH}(X,\o) \cap C^{1\bar 1}$ (\cite{Berm13}, for a survey see \cite[Theorem A.7]{Da18}) and
\begin{equation}\label{eq: MA_contact_set_est}
\o_{P(\delta u_j)}^n = \mathbbm{1}_{\{P(\delta u_j) =\delta u_j\}} \o^n_{\delta u_j} \leq \mathbbm{1}_{\{P(\delta u_j) =\delta u_j\}} \delta ^n \o^n_{u_j},
\end{equation}
where in the last estimate we have used the multinearity of the complex Monge--Amp\`ere measure and that $\o_{\delta u_j} \leq \delta \o_{u_j}$.

Consequently, we can write that
\begin{flalign*}
\int_X |P(\delta u_j)|^p \o_{P(\delta u_j)}^n &\leq \int_{\{P(\delta u_j) =\delta u_j\}} |P(\delta u_j)|^p \delta^n \omega_{u_j}^n= \delta ^{n+p} \int_{\{P(\delta u_j) =\delta u_j\}} |u_j|^p \o_{u_j}^n\\
& \leq  \delta ^{n+p} \int_X |u_j|^p \o_{u_j}^n.
\end{flalign*}
Now Lemma \ref{lem: uniform_est} and Lemma \ref{lem: fund_ineq} implies that the decreasing limit $P(\delta u) = \lim_{j} P(\delta u_j)$ is an element of $\mathcal E^p_\o$, moreover by Lemma \ref{lem: limit_energy_L^p}:
\begin{equation}\label{eq: P_delta_est}
\int_X |P(\delta u)|^p \o_{P(\delta u)}^n \leq \delta^{n+p} \int_X |u|^p \o_u^n.
\end{equation}
Next we show that $P(\delta u) \nearrow u$ as $\delta \searrow 1$. Since $\{P(\delta u) \}_\delta$ is increasing  and $P(\delta u) \leq u$ it follows that $\lim_{\delta \searrow 1}P(\delta u) =v \in \mathcal E^p_\o$ with $v \leq u$.
 
It follows from \eqref{eq: MA_contact_set_est} that $\o_{P(\delta u_j)}^n \leq \delta^n \o_{u_j}^n$. Taking the limit $j \to \infty$ we obtain that
$$\o_{P(\delta u)}^n \leq \delta^n \o_{u}^n.$$
Taking now the limit $\delta \searrow 1$ we obtain that $\o_v^n \leq \o_u^n$. Since both $u$ and $v$ are elements of $\mathcal E^p_\o$, it follows that this last inequality is in fact an equality, hence $u+c=v$ for some $c \leq 0$ (see \cite{Diw09}). After letting $\delta \searrow 1$  in \eqref{eq: P_delta_est} (and using again Lemma \ref{lem: limit_energy_L^p}) we see that $c=0$, proving that $P(\delta u)$ increases to $u$.

The last statement now readily follows from the fact that $P(\delta u) \leq \frac{1}{\delta}P(\delta u) \leq u$ and \cite[Proposition 4.9]{Da15}.
\end{proof}

\section{The Finsler geometry of Hermitian matrices}

In this section we explore the $L^p$ (or more generally Orlicz) Finsler geometry of $\Pn$.
We will point out that the geodesic curves in the different $L^p$ geometries are the same for any $p \geq 1$, shadowing the analogous results of \cite{Da15} in the infinite-dimensional setting.

$L^p$ norms unfortunately misbehave under differentiation, especially for $p \geq 1$ close to $1$. Because of this we will work with more general smooth Orlicz norms, and finally use approximation  to recover our main results for the (nonsmooth) $L^p$ norms, as in \cite{Da15}.
For an elaborate introduction to Orlicz norms we refer to \cite{RR91}, however the brief self-contained discussion in \cite[Section 1]{Da18} will suffice for our purposes. 

We take $(\chi,\chi^*)$, a complementary pair of Young weights. This means that 
$$\chi:\Bbb R \to \Bbb R^+ \cup \{ \infty \}$$ is convex, even, lower semi-continuous (lsc)  and satisfies the conditions $\chi(0)=0$, $1 \in \partial \chi(1)$. To clarify, $\partial \chi(l) \subset \Bbb R$ is the set of subgradients to $\chi$ at $l \in \Bbb R$. The complement $\chi^*$ is the Legendre transform of $\chi$:
$$
\chi^*(h) = \sup_{l \in \Bbb R} (lh - \chi(l)).
$$
Using convexity of $\chi$ and the above identity, one can verify that $\chi^*$ satisfies the same properties as $\chi$. Lastly, $(\chi,\chi^*)$ satisfies the Young identity and inequality:
\begin{equation}\label{eq: YoungIdIneq}
\chi(a) + \chi^*(\chi'(a))=a\chi'(a), \ \chi(a) + \chi^*(b) \geq ab, \ a,b \in \Bbb R, \ \chi'(a) \in \partial \chi(a).
\end{equation}
Naturally, the typical example to keep in mind is the pair $\chi_p(l)=|l|^p/p$ and $\chi^*_p(l)=|l|^q/q$, where $p,q > 1$ and $1/p + 1/q = 1$. Unfortunately these $L^p$ weights are not smooth for all $p \geq 1$, and we will have to approximate these weights with smooth Orlicz weights $\chi$, for which our initial analysis carries through. For ways on how to do this, we refer to \cite[Proposition 1.7]{Da18}. 

Returning to $\Pn$, we note that this space can be identified with an open subset of $\Bbb R^{n^2}$, hence it has the structure of a trivial $n^2$-dimensional manifold. We now point out how to introduce non-trivial Finsler metrics on $\Pn$, relevant to our work. 

Given $h \in \Pn$, let $\phi \in T_h \Pn=\Herm_n$ (recall \eqref{Hermneq}). We introduce the following $h$-self-adjoint operator $\phi^h \in \textup{GL}(n,\Bbb C)$:
$$\phi(s,s') = h(\phi^h  s, s') = h(s, \phi^h s'), \ \ s,s' \in \Bbb C^n.$$
In matrix notation we simply have that 
$$\phi^h := h^{-1} \cdot \phi.$$
Since $\phi^h$ is $h$-self-adjoint we obtain that it is diagonalizable with only real eigenvalues, i.e., $\phi^h \in \Bbb D^n$, with the notation of Section 2.2. Using the considerations of that same section, for continuous $\chi$  it makes sense to consider $\chi(\phi^h)$, and we introduce the $\chi$-Orlicz Finsler norm  $\| \phi \|_{\chi,h}$ in the following manner:
$$
\| \phi \|_{\chi,h}:=\inf \bigg\{ r > 0 :  \frac{1}{n}{{\tr}}\bigg[\chi\bigg(\frac{\phi^h}{r}\bigg)\bigg]  \leq \chi(1) \bigg\}.
$$
Given our specific setup, it is straightforward to see that ${{\tr}}\big[\chi\big(\frac{\phi^h}{\| \phi \|_{\chi,h}}\big)\big]  =n  \chi(1), $ in case $\chi$ is strictly convex and smooth. 

A few words are in order about why this definition gives a norm on each fiber of $T \Pn$. By convexity of $\chi$, it is well known that $A \to {{\tr}}\big[ \chi(A)\big]$ is convex for self-adjoint matrices $A$ (\cite[Theorem 2.10]{Ca10}). In particular, since  $\phi^h$ is $h$-self-adjoint, it follows that the correspondence
$$T_h \Pn \ni \phi  \to \frac{1}{n}{{\tr}}\big[ \chi(  \phi^h) \big] \in \Bbb R$$ 
is convex. Consequently, $\| \cdot \|_{\chi,h}$ is simply the Minkowski functional of this convex map, hence it is indeed a norm. 

Next, 
we state the matrix version of the Young identity and inequality from \eqref{eq: YoungIdIneq}:

\begin{proposition}\label{prop: Young_eq_ineq_matrix} Let $(\chi,\chi^*)$ be complementary Young weights, with $\chi$ continuous. Suppose $h \in \Pn$ and $u,v \in {\rm Herm}_n$. Then the following hold:\\
(i) $ {{\tr}}\big[\chi(u^h)\big] + {{\tr}}\big[\chi^*(\chi'(u^h))\big]={{\tr}}\big[u^h\chi'(u^h) \big].$\\
(ii) $ {{\tr}}\big[\chi(u^h)\big] + {{\tr}}\big[\chi^*(v^h)\big]\geq{{\tr}}\big[u^h v^h].$
\end{proposition}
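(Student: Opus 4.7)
The plan is to reduce both statements to their scalar counterparts (the Young identity/inequality stated in (2.9) above), exploiting that $u^h = h^{-1}u$ and $v^h = h^{-1}v$ are $h$-self-adjoint, hence lie in $\mathbb{D}^n$.

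For (i), since $u^h \in \mathbb{D}^n$, I would diagonalize: write $u^h = UDU^{-1}$ with $D = \operatorname{diag}(\lambda_1,\ldots,\lambda_n)$ and $\lambda_j \in \mathbb{R}$. By the definition (2.7) of matrix functions and the cyclicity of the trace, for any continuous $f:\mathbb{R}\to\mathbb{R}$ one has $\operatorname{tr}[f(u^h)] = \sum_j f(\lambda_j)$. Applying this separately to $f=\chi$, $f(t)=\chi^*(\chi'(t))$, and $f(t)=t\chi'(t)$ (all continuous on the interval containing the $\lambda_j$, since $\chi$ is convex and continuous), the matrix identity becomes the sum over $j$ of the scalar Young identities $\chi(\lambda_j)+\chi^*(\chi'(\lambda_j)) = \lambda_j\chi'(\lambda_j)$, which is exactly the scalar case of (2.9).

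For (ii) the obstacle is that $u^h$ and $v^h$ need not commute, so the simple scalar reduction of (i) is unavailable; in particular $\operatorname{tr}[u^h v^h]$ is not simply a sum of products of eigenvalues. The key trick is a congruence: using the positive Hermitian square root $h^{1/2}$, set
\[
\tilde u := h^{-1/2} u\, h^{-1/2}, \qquad \tilde v := h^{-1/2} v\, h^{-1/2},
\]
both of which are Hermitian. A direct computation shows $u^h = h^{-1/2}\,\tilde u\, h^{1/2}$, so $u^h$ and $\tilde u$ are similar and share the same (real) spectrum; likewise for $v^h$ and $\tilde v$. Consequently $\operatorname{tr}[\chi(u^h)] = \operatorname{tr}[\chi(\tilde u)]$, $\operatorname{tr}[\chi^*(v^h)] = \operatorname{tr}[\chi^*(\tilde v)]$, and another short calculation gives $\operatorname{tr}[u^h v^h] = \operatorname{tr}[\tilde u\, \tilde v]$.

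The inequality is now a statement about two ordinary Hermitian matrices. I would finish by invoking the von Neumann--type trace inequality for Hermitian matrices,
\[
\operatorname{tr}[\tilde u\, \tilde v] \;\leq\; \sum_{j=1}^n \lambda_j^{\downarrow}(\tilde u)\,\lambda_j^{\downarrow}(\tilde v),
\]
and then applying the scalar Young inequality from (2.9) termwise, $\lambda_j^{\downarrow}(\tilde u)\lambda_j^{\downarrow}(\tilde v) \le \chi(\lambda_j^{\downarrow}(\tilde u)) + \chi^*(\lambda_j^{\downarrow}(\tilde v))$. Summing and recognizing the right-hand side as $\operatorname{tr}[\chi(\tilde u)] + \operatorname{tr}[\chi^*(\tilde v)] = \operatorname{tr}[\chi(u^h)] + \operatorname{tr}[\chi^*(v^h)]$ yields (ii). The step that required thought is the congruence reduction in (ii); once one passes to Hermitian matrices, von Neumann's trace inequality and scalar Young do the rest.
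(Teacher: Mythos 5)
Your argument is correct, and part (i) is essentially the paper's proof. For (ii), however, you take a genuinely different route at the key step. The paper chooses an $h$-orthonormal basis diagonalizing $v^h$ (which is your congruence reduction in disguise, since in such a basis $h=I$ and $u^h, v^h$ become Hermitian). In that basis $\tr\big[u^h v^h\big] = \sum_j \lambda_j\, u^h_{jj}$, where $u^h_{jj}$ are the \emph{diagonal entries} of $u^h$; scalar Young gives $\lambda_j u^h_{jj} \leq \chi(u^h_{jj}) + \chi^*(\lambda_j)$, and non-commutativity is then absorbed by Peierls' inequality $\sum_j \chi(u^h_{jj}) \leq \tr\big[\chi(u^h)\big]$, which compares a convex function of the diagonal entries to the trace of the matrix function. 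Your approach instead handles non-commutativity through the Hermitian von Neumann (Richter--Fan) trace inequality $\tr[\tilde u\tilde v] \leq \sum_j \lambda_j^{\downarrow}(\tilde u)\lambda_j^{\downarrow}(\tilde v)$, and only then applies scalar Young to the sorted \emph{eigenvalues}. Both routes reduce to scalar Young in the end, but the linking lemmas differ: Peierls' inequality (a direct consequence of convexity, avoiding any need to compute or sort eigenvalues of $u^h$) versus the rearrangement-type von Neumann inequality. Your variant is perfectly valid and perhaps more familiar to readers who know von Neumann's inequality but not Peierls'; the paper's argument is marginally leaner since it never leaves the diagonal-entry level for $u^h$.
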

\begin{proof} The identity in (i) follows after simply diagonalizing $u^h$ and using \eqref{eq: YoungIdIneq}. 

To argue the estimate of (ii) we need to be slightly more careful. We choose an $h$-orthonormal basis for which $v^h = \textup{diag}(\lambda_1, \ldots,\lambda_n)$. As a result, ${{\tr}}[u^hv^h]=\sum_{j=1}^n \lambda_j u^h_{j\bar j}$, hence we can apply \eqref{eq: YoungIdIneq} to conclude that
$${{\tr}}[u^hv^h] \leq \sum_{j=1}^n \chi(u^h_{j\bar j}) + {{\tr}}\big[ \chi^*(v^h) \big].$$
Peierls' inequality \cite[Theorem 2.9]{Ca10} implies that  $\sum_{j=1}^n \chi(u^h_{j\bar j}) \leq {{\tr}}\big[ \chi(u^h) \big]$, finishing the argument.
\end{proof}

In analogy with \cite[Proposition 3.7]{Da18} (see \cite{Da15} for the original version), we  first establish a formula for the derivative of the Finsler metric:

\begin{proposition}\label{prop: H_norm_deriv} Let $\chi$ be a smooth strictly convex Orlicz weight. Suppose $(0,1) \ni t \to h_t \in \Pn$ and $(0,1) \ni t \to \phi_t \in {\rm Herm}_n$ are smooth curves, with $\phi_t \neq 0, \ t \in (0,1)$. The following formula holds:
$$\frac{d}{dt}\|\phi_t\|_{\chi,h_t} =
\frac{{{\tr}}\Big[\chi'\Big( \frac{\phi^{h_t}_t}{\| \phi_t\|_{\chi,h_t}}\Big)  \frac{d}{dt} \phi_t^{h_t}\Big]}{{{\tr}}\Big[\chi'\Big( \frac{\phi^{h_t}_t}{\| \phi_t\|_{\chi,h_t}}\Big)  \frac{\phi^{h_t}_t}{\| \phi_t\|_{\chi,h_t}}\Big]}.$$
\end{proposition}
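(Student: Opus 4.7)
The plan is to view $r(t) := \|\phi_t\|_{\chi,h_t}$ as implicitly defined by the equation
$$\tfrac{1}{n}\tr\bigl[\chi(\phi_t^{h_t}/r(t))\bigr] = \chi(1),$$
which holds because $\chi$ is smooth and strictly convex (as remarked right after the definition of $\|\cdot\|_{\chi,h}$), and then to differentiate this identity in $t$ by means of Proposition \ref{prop: Tr_f_deriv}. The key observation that makes this proposition applicable is that $\phi_t^{h_t}/r = h_t^{-1}\phi_t/r$ is $h_t$-self-adjoint for every $r > 0$, since $\phi_t$ is Hermitian.

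The first step is to establish that $t \mapsto r(t)$ is of class $C^1$ on $(0,1)$. Introduce $G(t,r) := \tfrac{1}{n}\tr[\chi(\phi_t^{h_t}/r)] - \chi(1)$. Applying Proposition \ref{prop: Tr_f_deriv} separately in each variable (with $\phi_t^{h_t}/r$ remaining $h_t$-self-adjoint throughout),
$$\partial_t G = \tfrac{1}{nr}\tr\bigl[\chi'(\phi_t^{h_t}/r)\cdot\tfrac{d}{dt}\phi_t^{h_t}\bigr], \qquad \partial_r G = -\tfrac{1}{nr^2}\tr\bigl[\chi'(\phi_t^{h_t}/r)\cdot\phi_t^{h_t}\bigr],$$
and these are jointly continuous. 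Evenness plus strict convexity of $\chi$ give $x\chi'(x) > 0$ for $x \ne 0$; diagonalizing $\phi_t^{h_t}$ in an $h_t$-orthonormal basis, the trace in $\partial_r G$ is then strictly positive, since $\phi_t \ne 0$ forces at least one eigenvalue of $\phi_t^{h_t}$ to be nonzero. Hence $\partial_r G < 0$ along $\{G = 0\}$, and the implicit function theorem produces a unique $C^1$ solution, which by definition must coincide with $r(t) = \|\phi_t\|_{\chi,h_t}$.

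Next, set $A_t := \phi_t^{h_t}/r(t)$. Then $t \mapsto A_t$ is smooth and $A_t$ is $h_t$-self-adjoint, so Proposition \ref{prop: Tr_f_deriv} applied to $t \mapsto \tr[\chi(A_t)]$ yields
$$0 = \tr\bigl[\chi'(A_t)\,\dot{A}_t\bigr] = \tfrac{1}{r(t)}\tr\bigl[\chi'(A_t)\cdot\tfrac{d}{dt}\phi_t^{h_t}\bigr] - \tfrac{\dot{r}(t)}{r(t)^2}\tr\bigl[\chi'(A_t)\cdot\phi_t^{h_t}\bigr].$$
Solving for $\dot{r}(t)$ and substituting $\phi_t^{h_t} = r(t) A_t$ in the denominator (the common factor of $r(t)$ then cancels) gives exactly the stated formula.

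The main conceptual hurdle is that eigenvectors of $A_t$ may vary discontinuously with $t$, so one cannot naively differentiate $\chi(A_t)$ by passing to an eigenbasis. This is precisely the obstruction that Proposition \ref{prop: Tr_f_deriv} circumvents: the $h_t$-self-adjointness hypothesis supplies uniform control on the eigenvalues of $A_t$ (and hence dominated convergence in the polynomial approximation argument), which is exactly what is needed both to establish $C^1$ regularity of $r(t)$ via the implicit function theorem and to differentiate the defining identity. Once Proposition \ref{prop: Tr_f_deriv} is admitted, the entire computation is formally identical to the scalar identity $\dot r = \chi'(a/r)\,\dot a/\bigl(\chi'(a/r)\cdot a/r\bigr)$ one would obtain in the $1\times 1$ case.
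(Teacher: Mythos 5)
Your proof is correct and follows essentially the same route as the paper: apply Proposition \ref{prop: Tr_f_deriv} to compute the $r$- and $t$-derivatives of the defining identity $\tr[\chi(\phi_t^{h_t}/r)] = n\chi(1)$, use strict convexity and evenness of $\chi$ together with $\phi_t \neq 0$ to show $\partial_r < 0$, invoke the implicit function theorem for $C^1$ regularity, and then solve for $\dot r$. The only difference is that you spell out the final chain-rule computation explicitly, where the paper leaves it as ``Using again Proposition \ref{prop: Tr_f_deriv}, the following formula holds.''
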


The fact that $\chi$ is smooth plays an essential role here. As a courtesy to the reader, we provide the argument that is essentially identical to the proof of \cite[Proposition 3.7]{Da18}.

\begin{proof}   We introduce the smooth function $F: \Bbb R^+ \times (0,1) \to \Bbb R$ given by the formula
$$F(r,t) = {{\tr}}\bigg[\chi\bigg(\frac{\phi_t^{h_t}}{r}\bigg)\bigg] .$$
As $\chi$ is strictly convex and even, along with $\chi(0)=0$, we have that $\chi'(l) >0, \ l >0$ and $\chi'(l) <0, \ l <0$. As $t \to \phi^{h_t}_t$ is non-vanishing, it follows from Proposition \ref{prop: Tr_f_deriv} that
$$\frac{d}{dr} F(r,t)=- \frac{1}{r^2}{{\tr}} \bigg[\chi'\bigg(\frac{\phi_t^{h_t}}{r}\bigg) \cdot \phi_t^{h_t} \bigg] < 0$$
for all $r > 0,\ t \in (0,1)$. Indeed, the matrix whose trace we take above has only non-negative eigenvalues, with at least one non-zero eigenvalue.

Using the above, and the fact that 
$F(\|\phi_t\|_{\chi,h_t},t) = n\chi(1)$,  an application of the implicit function theorem yields that the map $t \to \|\phi_{t}\|_{\chi,h_t}$ is continuously differentiable. Using again Proposition \ref{prop: Tr_f_deriv}, the following formula holds, as proposed:
$$\frac{d}{dt}\|\phi_t\|_{\chi,h_t} =
\frac{{{\tr}}\Big[\chi'\Big( \frac{\phi^{h_t}_t}{\| \phi_t\|_{\chi,h_t}}\Big)  \frac{d}{dt} \phi_t^{h_t}\Big]}{{{\tr}}\Big[\chi'\Big( \frac{\phi^{h_t}_t}{\| \phi_t\|_{\chi,h_t}}\Big)  \frac{\phi^{h_t}_t}{\| \phi_t\|_{\chi,h_t}}\Big]}.$$
\end{proof}

Having an Orlicz--Finsler metric on $T \Pn$, we can define the length 
$$l_\chi(\{h_t\})=l_\chi\big(\{h_t\}_{t\in[0,1]}\big)$$ of any smooth curve $t \to h_t$ in $\Pn$. This leads to the definition of a pseudo-distance on $\Pn$:
\begin{equation}\label{eq: d^chi_H_def}
d_{\chi,\Pn}(v_0,v_1):= \inf \{l_\chi(\{u_t\}) \},
\end{equation}
where the infimum is taken over all smooth curves $[0,1] \ni t\mapsto u_t \in \Pn$ such that $u_0=v_0$ and $u_1=v_1$.

We will show that $d_{\chi,\Pn}$ is indeed a metric, but first we need to 
construct some geodesics of this pseudo-distance.
Any two points of $h_0,h_1 \in \Pn$ can be joined by a \emph{candidate geodesic} $[0,1] \ni t \to h_t \in \Pn$ governed by the following equation:
\begin{equation}\label{eq: H_geod_eq}
\frac{d}{dt} \Big(h^{-1}_t \dot h_t\Big) = \frac{d}{dt} \dot h_t^{h_t} =0.
\end{equation}
This equation is independent of $\chi$, and it is the well known geodesic equation of the $L^2$ geometry. In particular, it is well known (and easy to show) that for any $h_0,h_1$ there exists a unique curve $t \to h_t$ solving the above equation: in an $h_0$-orthonormal basis of $\Bbb C^n$ making $h_1 = \textup{diag}(e^{\lambda_1},\ldots,e^{\lambda_n})$, we simply take $h_t:= \textup{diag}(e^{t\lambda_1},\ldots,e^{t\lambda_n}), \ t \in [0,1]$.

Next we provide the finite-dimensional analog of a result of \cite{Da15}, that in turn builds on calculations of \cite{Ch00} in case of the $L^2$ geometry (see  also \cite[Proposition 3.11]{Da18}):

\begin{proposition}Let $\chi$ be a smooth strictly convex weight. Let $[0,1] \ni s \to h_s \in \Pn$ be a smooth curve and $v \in \Pn \setminus h_{[0,1]}$.  We denote by $[0,1] \times [0,1] \ni (s,t) \to g_{s,t} \in \Pn$ the smooth function for which $t \to g_{s,t}$ is the (candidate) geodesic from \eqref{eq: H_geod_eq} connecting $v$ and $h_s$. Then the following holds:
$$l_\chi(\{g_{0,t}\}) \leq l_\chi(\{h_s\}) + l_\chi(\{g_{1,t}\}).$$
\end{proposition}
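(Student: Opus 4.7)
\medskip
\noindent\textbf{Proof plan.} My plan is to introduce the length function $f(s):=l_\chi(\{g_{s,t}\}_{t\in[0,1]})$ and prove the pointwise bound $|f'(s)|\le \|\dot h_s\|_{\chi,h_s}$ for each $s\in[0,1]$. Integrating this bound immediately yields
$$|f(0)-f(1)|\le \int_0^1 \|\dot h_s\|_{\chi,h_s}\,ds = l_\chi(\{h_s\}),$$
which gives $l_\chi(\{g_{0,t}\})\le l_\chi(\{g_{1,t}\})+l_\chi(\{h_s\})$ as required.

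First I would exploit the geodesic equation \eqref{eq: H_geod_eq} satisfied by $t\mapsto g_{s,t}$ in two complementary ways. Applying Proposition \ref{prop: H_norm_deriv} along this curve, the vanishing $\frac{d}{dt}(\partial_t g_{s,t})^{g_{s,t}}=0$ forces the Orlicz-speed $t\mapsto \|\partial_t g_{s,t}\|_{\chi,g_{s,t}}$ to be constant, so
$$f(s)=\|\partial_t g_{s,1}\|_{\chi,h_s},$$
and moreover the $h_s$-self-adjoint operator $M_s:=g_{s,t}^{-1}\partial_t g_{s,t}$ is independent of $t$. Differentiating $f$ in $s$ by a second application of Proposition \ref{prop: H_norm_deriv}, now to the smooth curves $s\mapsto h_s$ and $s\mapsto \partial_t g_{s,1}$, then gives
$$f'(s)=\frac{\tr[\chi'(A_s)\,\partial_s M_s]}{\tr[\chi'(A_s)\,A_s]},\qquad A_s:=M_s/f(s).$$

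The crucial step is computing $\partial_s M_s$. Setting $X_{s,t}:=g_{s,t}^{-1}\partial_s g_{s,t}$ and using the equality of mixed partials $\partial_s\partial_t g_{s,t}=\partial_t\partial_s g_{s,t}$, a direct calculation produces the identity
$$\partial_s M_s = \partial_t X_{s,t} + [M_s,X_{s,t}]\qquad\text{for every }t.$$
Integrating this in $t$ over $[0,1]$ and using the boundary conditions $X_{s,0}=0$ (since $g_{s,0}\equiv v$ is constant in $s$) and $X_{s,1}=\dot h_s^{h_s}$, I obtain $\partial_s M_s = \dot h_s^{h_s} + \int_0^1 [M_s,X_{s,t}]\,dt$. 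Since $\chi'(A_s)$ is a matrix function of $A_s$, which is a scalar multiple of $M_s$, it commutes with $M_s$; hence by cyclicity of the trace $\tr[\chi'(A_s)\,[M_s,X_{s,t}]]=0$ for every $t$, collapsing the numerator of $f'(s)$ to $\tr[\chi'(A_s)\,\dot h_s^{h_s}]$.

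The final step is to bound this expression via the matrix Young inequality, Proposition \ref{prop: Young_eq_ineq_matrix}(ii), applied at the basepoint $h_s$ with $u=\pm \dot h_s/\|\dot h_s\|_{\chi,h_s}$ (for which $\tr[\chi(u^{h_s})]=n\chi(1)$ by the defining equation of the Orlicz norm and evenness of $\chi$) and $v=h_s\chi'(A_s)$ (Hermitian since $\chi'(A_s)$ is $h_s$-self-adjoint). Combined with the Young identity of Proposition \ref{prop: Young_eq_ineq_matrix}(i) applied to $A_s$---which uses $\tr[\chi(A_s)]=n\chi(1)$ to absorb the $n\chi(1)$ and the $\chi^*$ term into $\tr[A_s\chi'(A_s)]$---this yields $\pm f'(s)\le \|\dot h_s\|_{\chi,h_s}$, and integration concludes the proof. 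The main obstacle I anticipate is establishing the identity $\partial_s M_s = \partial_t X_{s,t}+[M_s,X_{s,t}]$ together with the commutation $[\chi'(A_s),M_s]=0$: these two facts are the geometric heart of the argument, encoding how the geodesic equation forces the interior commutator terms to vanish and reduces the first variation of length to a pure boundary pairing against $\dot h_s$.
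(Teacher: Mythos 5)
Your proposal is correct and uses the same essential ingredients as the paper's proof: the Finsler derivative formula (Proposition \ref{prop: H_norm_deriv}), the $t$-independence of $g_{s,t}^{-1}\partial_t g_{s,t}$ forced by the geodesic equation, the commutator identity arising from equality of mixed partials (your $\partial_s M_s = \partial_t X_{s,t} + [M_s,X_{s,t}]$ is exactly the paper's identity relating $\partial_s(\dot g^g)$ and the $t$-derivative of $(\partial_s g)^g$), cyclicity of trace to kill the commutator terms, and the matrix Young identity/inequality to convert the boundary pairing into the Finsler norm of $\dot h_s$. The only difference is organizational---you reduce $f(s)$ to $\|M_s\|_{\chi,h_s}$ before differentiating in $s$, rather than integrating the $t$-derivative inside the length functional as the paper does---which is a clean repackaging yielding the slightly stronger two-sided bound $|f'(s)| \leq \|\dot h_s\|_{\chi,h_s}$.
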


\begin{proof}
To avoid cumbersome notation and possible confusion, derivatives in the $t$--direction will be denoted by dots and derivatives in the $s$--direction will be denoted by $\partial_s$.  

By $l_\chi(g_{s,t})$ we denote the $\chi$-length of the curve $t \to g_{s,t}$. Using the previous proposition, we start with the following line of calculation:\begin{flalign}\label{eq: begin_calc}
\partial_sl_\chi(g_{s,t})&= \int_0^1 \partial_s \|\dot g_{s,t}\|_{\chi,g_{s,t}}dt = \int_0^1 \frac{{{\tr}}\Big[\chi'\Big( \frac{\dot g_{s,t}^{g_{s,t}}}{\| \dot g_{s,t}\|_{\chi,g_{s,t}}}\Big)  \partial_s \big(\dot g_{s,t}^{g_{s,t}}\big)\Big]}
{{{\tr}}\Big[\chi'\Big( \frac{\dot g_{s,t}^{g_{s,t}}}{\| \dot g_{s,t}\|_{\chi,g_{s,t}}}\Big)  \frac{\dot g_{s,t}^{g_{s,t}}}{\| \dot g_{s,t}\|_{\chi,g_{s,t}}}\Big]} dt.
\end{flalign}
Next we notice that 
 $$\partial_s \big(\dot g_{s,t}^{g_{s,t}}\big)-\dot{\overline{\big(\partial_s g_{s,t}\big)^{g_{s,t}}}}=-\big(\partial_s g_{s,t} \big)^{g_{s,t}}\cdot \dot g_{s,t}^{g_{s,t}} + \dot g_{s,t}^{g_{s,t}} \cdot \big(\partial_s g_{s,t} \big)^{g_{s,t}}.$$
 Here $\dot{\overline{x}}$ just means the derivative in $t$ (and not complex conjugate). 
By definition (see \eqref{f_matrix_def}) we also have that  
$$\chi'\bigg( \frac{\dot g_{s,t}^{g_{s,t}}}{\| \dot g_{s,t}\|_{\chi,g_{s,t}}}\bigg) \cdot \dot g_{s,t}^{g_{s,t}}=\dot g_{s,t}^{g_{s,t}} \cdot \chi'\bigg( \frac{\dot g_{s,t}^{g_{s,t}}}{\| \dot g_{s,t}\|_{\chi,g_{s,t}}}\bigg). $$ Using the formula ${{\tr}}\big[U\cdot V \big]={{\tr}}\big[V\cdot U \big]$ together with the above observations, we can continue \eqref{eq: begin_calc} in the following manner:
\begin{flalign*}
\partial_sl_\chi(g_{s,t})&= \int_0^1 \frac{{{\tr}}\Big[\chi'\Big( \frac{\dot g_{s,t}^{g_{s,t}}}{\| \dot g_{s,t}\|_{\chi,g_{s,t}}}\Big)  \dot{\overline{\big(\partial_s g_{s,t}\big)^{g_{s,t}}}}\Big]}{{{\tr}}\Big[\chi'\Big( \frac{\dot g_{s,t}^{g_{s,t}}}{\| \dot g_{s,t}\|_{\chi,g_{s,t}}}\Big)  \frac{\dot g_{s,t}^{g_{s,t}}}{\| \dot g_{s,t}\|_{\chi,g_{s,t}}}\Big]} dt
=\int_0^1 \frac{\frac{d}{dt}{{\tr}}\Big[\chi'\Big( \frac{\dot g_{s,t}^{g_{s,t}}}{\| \dot g_{s,t}\|_{\chi,g_{s,t}}}\Big)  \big(\partial_s g_{s,t}\big)^{g_{s,t}}\Big] }
{{{\tr}}\Big[\chi'\Big( \frac{\dot g_{s,t}^{g_{s,t}}}{\| \dot g_{s,t}\|_{\chi,g_{s,t}}}\Big)  \frac{\dot g_{s,t}^{g_{s,t}}}{\| \dot g_{s,t}\|_{\chi,g_{s,t}}}\Big]} dt \\
&=\int_0^1 \frac{d}{dt} \frac{{{\tr}}\Big[\chi'\Big( \frac{\dot g_{s,t}^{g_{s,t}}}{\| \dot g_{s,t}\|_{\chi,g_{s,t}}}\Big)  \big(\partial_s g_{s,t}\big)^{g_{s,t}}\Big] }
{{{\tr}}\Big[\chi'\Big( \frac{\dot g_{s,t}^{g_{s,t}}}{\| \dot g_{s,t}\|_{\chi,g_{s,t}}}\Big)  \frac{\dot g_{s,t}^{g_{s,t}}}{\| \dot g_{s,t}\|_{\chi,g_{s,t}}}\Big]} dt,
\end{flalign*}
where in the last two steps we have used   the geodesic equation \eqref{eq: H_geod_eq} (and Proposition \ref{prop: H_norm_deriv}) to conclude that $ \dot g_{s,t}^{g_{s,t}}$ and $\| \dot g_{s,t}\|_{\chi,g_{s,t}}$ are independent of $t$. We can integrate out this last identity to deduce that
\begin{flalign}\label{eq: lastidd}
\partial_sl_\chi(g_{s,t})& = \frac{{{\tr}}\Big[\chi'\Big( \frac{\dot g_{s,t}^{g_{s,t}}}{\| \dot g_{s,t}\|_{\chi,h_t}}\Big)  \big(\partial_s g_{s,t}\big)^{g_{s,t}}\Big] }
{{{\tr}}\Big[\chi'\Big( \frac{\dot g_{s,t}}{\| \dot g_{s,t}\|_{\chi,g_{s,t}}}\Big)  \frac{\dot g_{s,t}^{g_{s,t}}}{\| \dot g_{s,t}\|_{\chi,g_{s,t}}}\Big]}\Bigg|_{t=1} = \frac{{{\tr}}\Big[\chi'\Big( \frac{\dot g_{s,1}^{h_s}}{\| \dot g_{s,1}\|_{\chi,h_s}}\Big)  \big(\partial_s h_s\big)^{h_s}\Big] }
{{{\tr}}\Big[\chi'\Big( \frac{\dot g_{s,1}^{h_s}}{\| \dot g_{s,1}\|_{\chi,h_s}}\Big)  \frac{\dot g_{s,1}^{h_s}}{\| \dot  g_{s,1}\|_{\chi,h_s}}\Big]} \nonumber\\
&\geq -\big\|\big(\partial_s h_s\big)^{h_s}\big\|_{\chi, h_s},
\end{flalign}
where in the last step we have used the matrix version of the Young identity and inequality (Proposition \ref{prop: Young_eq_ineq_matrix}(i)(ii)) in the following manner:
\begin{flalign*}
\frac{{{\tr}}\Big[\chi'\Big( \frac{\dot g_{s,1}^{h_s}}{\| \dot g_{s,1}\|_{\chi,h_s}}\Big)  \big(\partial_s h_s\big)^{h_s}\Big] }
{\big\|\big(\partial_s h_s\big)^{h_s}\big\|_{\chi, h_s}} 
& \geq- {{\tr}} \bigg[ \chi \bigg( \frac{ \big(\partial_s h_s\big)^{h_s} }{\big\| \big(\partial_s h_s\big)^{h_s}\big\|_{\chi,h_s}}\bigg) + \chi^*\bigg(-\chi' \bigg(\frac{\dot g_{s,1}^{h_s}}{\| \dot g_{s,1}\|_{\chi,h_s}}\bigg)\bigg) \bigg]\\
& = - {{\tr}} \bigg[ \chi \bigg( \frac{ \big(\partial_s h_s\big)^{h_s} }{\big\| \big(\partial_s h_s\big)^{h_s}\big\|_{\chi,h_s}}\bigg) + \chi^*\bigg(\chi' \bigg(\frac{\dot g_{s,1}^{h_s}}{\| \dot g_{s,1}\|_{\chi,h_s}}\bigg)\bigg) \bigg]\\
&=-\bigg(n\chi(1) + {{\tr}}\bigg[\chi^*\bigg(\chi' \bigg(\frac{\dot g_{s,1}^{h_s}}{\| \dot g_{s,1}\|_{\chi,h_s}}\bigg)\bigg)\bigg]\bigg)\\
&=-\bigg({{\tr}}\bigg[\chi \bigg(\frac{\dot g_{s,1}^{h_s}}{\| \dot g_{s,1}\|_{\chi,h_s}}\bigg)\bigg] + {{\tr}}\bigg[\chi^*\bigg(\chi' \bigg(\frac{\dot g_{s,1}^{h_s}}{\| \dot g_{s,1}\|_{\chi,h_s}}\bigg)\bigg)\bigg]\bigg)\\
&=-{{\tr}}\bigg[\chi'\bigg( \frac{\dot g_{s,1}^{h_s}}{\| \dot g_{s,1}\|_{\chi,h_s}}\bigg)  \frac{\dot g_{s,1}^{h_s}}{\| \dot g_{s,1}\|_{\chi,h_s}}\bigg].
\end{flalign*}
Integrating estimate \eqref{eq: lastidd} with respect to $s$ yields the desired inequality.
\end{proof}

Finally we arrive at the main result of this section, that identifies the geodesics of the $\chi$-Finsler geometry and implicity shows that $d_{\chi,\Pn}$, 
defined in \eqref{eq: d^chi_H_def}, is a metric. With the previous proposition in hand we can consider non-smooth Orlicz weights. Though bigger generality is possible, we will restrict our attention to the
following class:

\begin{definition}
\lb{WpDef}
Let $\chi:\Bbb R \to \Bbb R$ be convex, even, lsc, 
such that $\chi(0)=0$, $1 \in \partial \chi(1)$. 
Let $p\ge 1$. We say that $\chi\in\mathcal W^+_p$ 
if
$$l\chi'(l) \leq p \chi(l), \qq \h{for all}\quad l > 0.$$
\end{definition}

Clearly the standard $L^p$ weight is an element of $\mathcal W^+_p$.
In the context of K\"ahler geometry  these weights were introduced by Guedj--Zeriahi \cite{GZ07}, and to learn more about their use we refer to \cite[Chapter 1]{Da18}. 

\begin{theorem}\label{thm: Hn_+_metric_geod} Let $\chi \in \mathcal W^+_p$ for some $p \geq 1$. Suppose $[0,1] \ni t \to h_t \in \Pn$ is a smooth curve and $t \to g_t$ is the candidate geodesic joining $h_0$ and $h_1$ with $h_0 \neq h_1$. Then
$$l_\chi(h_t) \geq l_\chi(g_t).$$
In particular, $d_{\chi,\Pn}(h_0,h_1) = l_\chi(g_t) >0$, hence 
$(\Pn,d_{\chi,\Pn})$ is a bona fide geodesic metric space, whose metric geodesics are governed by the equation \eqref{eq: H_geod_eq}. 
\end{theorem}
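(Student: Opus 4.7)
The plan is to reduce the theorem to the preceding proposition (which applied only to smooth strictly convex weights) by a smooth approximation of $\chi\in\mathcal{W}_p^+$, and to extract the desired inequality by letting the base point $v$ in the preceding proposition tend to an endpoint of the given curve.

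First, I set up the argument for a smooth strictly convex Orlicz weight $\chi$. Choose a sequence of points $v_m \in \Pn \setminus h_{[0,1]}$ with $v_m \to h_1$. For each $m$ the preceding proposition applies, giving
$$l_\chi(\{g^m_{0,t}\}) \leq l_\chi(\{h_s\}) + l_\chi(\{g^m_{1,t}\}),$$
where $g^m_{0,t}$ and $g^m_{1,t}$ are the candidate geodesics from $v_m$ to $h_0$ and from $v_m$ to $h_1$, respectively. Because \eqref{eq: H_geod_eq} is endpoint stable and its solutions depend continuously on endpoints, $l_\chi(\{g^m_{1,t}\}) \to 0$ while $l_\chi(\{g^m_{0,t}\}) \to l_\chi(\{g_t\})$ as $m \to \infty$, yielding $l_\chi(\{g_t\}) \leq l_\chi(\{h_s\})$ in the smooth strictly convex case.

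Next I pass from smooth strictly convex weights to an arbitrary $\chi \in \mathcal W_p^+$ by approximation. Using \cite[Proposition 1.7]{Da18} I select a family of smooth strictly convex Orlicz weights $\chi_\varepsilon$ converging to $\chi$ pointwise (and locally uniformly) on $\RR$, with $\chi_\varepsilon \in \mathcal W_p^+$ uniformly in $\varepsilon$. The step above gives $l_{\chi_\varepsilon}(\{g_t\}) \leq l_{\chi_\varepsilon}(\{h_s\})$ for each $\varepsilon$. The key point is that the $\mathcal W_p^+$ condition controls the Orlicz norm by the $L^p$-norm of the eigenvalues from both sides with constants that are uniform in $\varepsilon$; since $t \mapsto h_t$ and $t \mapsto \dot h_t$ are continuous on $[0,1]$, so are the eigenvalues of $\dot h_t^{h_t}$, and we obtain a uniform-in-$(\varepsilon,t)$ bound for $\|\dot h_t\|_{\chi_\varepsilon,h_t}$. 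Combining pointwise convergence $\chi_\varepsilon \to \chi$ (which forces $\|\dot h_t\|_{\chi_\varepsilon,h_t} \to \|\dot h_t\|_{\chi,h_t}$ through the Minkowski functional definition) with dominated convergence allows passage to the limit in the length integrals, and the inequality $l_\chi(\{g_t\}) \leq l_\chi(\{h_s\})$ follows. The main technical obstacle is precisely this dominated-convergence step, since the Minkowski-functional definition of the Orlicz norm requires care when $\chi$ fails to be differentiable; the $\mathcal W_p^+$ hypothesis is tailor-made to provide the two-sided $L^p$ comparability needed here.

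Finally, to conclude that $d_{\chi,\Pn}$ is a genuine metric and that $g_t$ realizes it, note that on the candidate geodesic the matrix $\dot g_t^{g_t}$ is constant in $t$, equal (in the distinguished basis) to $\operatorname{diag}(\lambda_1,\dots,\lambda_n)$ where $e^{\lambda_j}$ are the eigenvalues of $h_0^{-1}h_1$. Hence $l_\chi(\{g_t\}) = \|\dot g_0\|_{\chi,h_0}$, which is strictly positive whenever $h_0 \neq h_1$ (some $\lambda_j \neq 0$, together with the strict positivity of the Minkowski functional on nonzero vectors, which survives the limit thanks again to $\mathcal W_p^+$). Since the inequality above holds for every smooth curve joining $h_0$ and $h_1$, taking the infimum gives $d_{\chi,\Pn}(h_0,h_1) = l_\chi(\{g_t\}) > 0$, so $d_{\chi,\Pn}$ is non-degenerate (the triangle inequality and symmetry being immediate from \eqref{eq: d^chi_H_def}). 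Thus $(\Pn, d_{\chi,\Pn})$ is a geodesic metric space and the candidate curves solving \eqref{eq: H_geod_eq} are its geodesics.
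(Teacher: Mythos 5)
Your proposal is essentially the same as the paper's argument: apply the preceding proposition with the auxiliary point $v$ outside the curve, let $v$ tend to the endpoint $h_1$ (the paper writes this as $g_1^j \to g_1$), use continuity of the geodesic equation in endpoints to pass to the limit and obtain $l_\chi(g_t) \leq l_\chi(h_t)$ for smooth strictly convex $\chi$, and then approximate a general $\chi \in \mathcal W^+_p$ by smooth strictly convex weights via \cite[Proposition 1.7]{Da18}. The only cosmetic differences are that the paper begins with the harmless reduction ``WLOG $g_1 \notin h_{[0,1)}$'' (which ensures approximating points $v_m \notin h_{[0,1]}$ with $v_m \to h_1$ always exist, including in the degenerate case $n=1$), and that where you sketch a dominated-convergence argument for $l_{\chi_\varepsilon} \to l_\chi$, the paper simply invokes \cite[Proposition 1.6]{Da18}, which is the precise statement packaging exactly that convergence for curves with uniformly bounded speed.
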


For sake of simplicity, in the sequel we will use this result only for the $L^p$ Finsler structure on $\Pn$. Looking at the geodesic equation \eqref{eq: H_geod_eq}, $l_\chi(g_t)$ can be explicitly calculated, yielding the following explicit formula for 
$d_{p,\Pn}$:
\begin{equation}\label{eq: d_p_Q_def}
d_{p,\Pn}(h_0,h_1) = \bigg[ \frac{1}{n}\sum_{j=1}^n |\lambda_j|^p\bigg]^{\frac{1}{p}}, \qq h_0,h_1 \in \mathcal P_n,
\end{equation}
where $e^{\lambda_1},\ldots,e^{\lambda_n}$ are the eigenvalues of $h_1$ with respect to $h_0$.

\begin{proof} We can assume without loss of generality that $g_1 \not \in h_{[0,1)}$. First we assume that $\chi$ is smooth and strictly convex. 

Let $g^j_1 \in \Pn \setminus h_{[0,1]}$ such that the coefficients of $g^j_1$ converge to $g_1$. Then the previous proposition implies that
$$l_\chi(g^j_t) \leq l_\chi(h_t) + l_\chi(\tilde g^j_t),$$
where $t \to g^j_t$ is the (candidate) geodesic joining $g_1^j$ with $g_0$ and $t \to \tilde g^j_t$ is the  (candidate) geodesic joining $g_1^j$ with $g_1$. Letting $j \to \infty$ we obtain the desired estimate:
$$l_\chi(g_t) \leq l_\chi(h_t).$$

For a general Young weight $\chi \in \mathcal W^+_p$, let $\chi_k$ be a sequence of smooth strictly convex weights that converge to $\chi$ uniformly on compact intervals.
Such sequence can always be found by \cite[Proposition 1.7]{Da18}. 
By the above we have that
$$l_{\chi_j}(g_t) \leq l_{\chi_j}(h_t).$$

Then \cite[Proposition 1.6]{Da18} implies that $l_{\chi_j}(h_t) \to l_{\chi}(h_t)$, and $l_{\chi_j}(g_t) \to l_{\chi}(g_t)$ to give the desired estimate, and finish the proof of the theorem.
\end{proof}

\section{Quantization of the $L^p$ Finsler structures}

\subsection{Quantization of points}
Our aim in this subsection is to prove the following result:

\begin{theorem}\label{thm: point_quant} Suppose $u \in \mathcal E^p_\o$. Then $d_p({\FSk} \circ {{\hbox{\rm H}}}_k(u),u) \to 0$.
\end{theorem}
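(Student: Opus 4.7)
The strategy is to sandwich ${\FSk} \circ {{\hbox{\rm H}}}_k(u)$ between a strongly $\omega$-psh approximation of $u$ from below and a smooth approximation from above, using the Ohsawa--Takegoshi theorem to force the lower bound and the classical Bergman kernel asymptotics to force the upper bound, and then to upgrade the resulting pointwise squeeze to $d_p$-convergence via the two-sided comparison recalled in Section 2.2.

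First observe that $v \mapsto {\FSk} \circ {{\hbox{\rm H}}}_k(v)$ is pointwise monotone increasing in $v$: indeed, $v_1 \leq v_2$ implies $e^{-kv_1} \geq e^{-kv_2}$, hence ${{\hbox{\rm H}}}_k(v_1) \geq {{\hbox{\rm H}}}_k(v_2)$ as Hermitian forms, and ${\FSk}$ reverses this by the extremal characterization \eqref{eq: FSk_extremal_def}. After normalizing $u \leq -1$, Proposition 2.14 produces the family $u_\delta := \tfrac{1}{\delta} P(\delta u)$, which satisfies $u_\delta \leq u$, $\omega_{u_\delta} \geq (1 - 1/\delta)\omega$ (from the expansion $\omega + \tfrac{1}{\delta}i\ddbar P(\delta u) = (1 - 1/\delta)\omega + \tfrac{1}{\delta}\omega_{P(\delta u)}$), and $d_p(u_\delta, u) \to 0$ as $\delta \searrow 1$. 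A companion Demailly-type regularization (combined with the $d_p$-continuity of decreasing limits within $\mathcal{E}^p_\omega$) furnishes smooth $v_j \in \mathcal{H}_\omega$ with $v_j \geq u$ and $d_p(v_j, u) \to 0$.

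Applying Theorem 2.10 to the strongly positive $u_\delta$ yields, at every $x$ with $u(x) > -\infty$ and all large $k$, a peak section $s_x \in H^0(X, L^k)$ for which the extremal formula \eqref{eq: FSk_extremal_def} gives ${\FSk}({{\hbox{\rm H}}}_k(u_\delta))(x) \geq u_\delta(x) - \tfrac{\log C}{k}$, so by monotonicity ${\FSk} \circ {{\hbox{\rm H}}}_k(u) \geq u_\delta - \tfrac{\log C}{k}$. The classical Tian--Catlin--Zelditch Bergman expansion applied to the smooth strictly $\omega$-psh $v_j$ (with the minor adjustment that our ${{\hbox{\rm H}}}_k$ integrates against $\omega^n$ instead of $\omega_{v_j}^n$, a smooth positive density change) gives ${\FSk}({{\hbox{\rm H}}}_k(v_j)) \to v_j$ in $C^0$, so by monotonicity ${\FSk} \circ {{\hbox{\rm H}}}_k(u) \leq v_j + \eta_{k,j}$ with $\eta_{k,j} \to 0$ as $k \to \infty$. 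Altogether,
\[
u_\delta - \tfrac{\log C}{k} \;\leq\; {\FSk} \circ {{\hbox{\rm H}}}_k(u) \;\leq\; v_j + \eta_{k,j}.
\]

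To convert this squeeze into $d_p$-convergence use the double estimate $d_p(f,g)^p \asymp \tfrac{1}{V}\int_X |f-g|^p (\omega_f^n + \omega_g^n)$ from Section 2.2. The pointwise bound gives $|{\FSk} \circ {{\hbox{\rm H}}}_k(u) - u_\delta| \leq (v_j - u_\delta) + o_k(1)$, which integrated against $\omega_{u_\delta}^n$ is controlled by $d_p(u_\delta, v_j)^p + o_k(1) \leq (d_p(u_\delta, u) + d_p(u, v_j))^p + o_k(1)$, small on sending first $k \to \infty$ and then $\delta \searrow 1$, $j \to \infty$. Combined with the triangle inequality $d_p({\FSk} \circ {{\hbox{\rm H}}}_k(u), u) \leq d_p({\FSk} \circ {{\hbox{\rm H}}}_k(u), u_\delta) + d_p(u_\delta, u)$, this closes the argument. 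The main technical obstacle is that the two-sided comparison also requires integrating against $\omega_{{\FSk} \circ {{\hbox{\rm H}}}_k(u)}^n$, the Monge--Amp\`ere measure on the Fubini--Study side, over which we have no a priori uniform bound in $k$; this is handled by invoking the fundamental inequality Lemma 2.3 to transfer such integrals between comparable potentials, or by running the sandwich argument symmetrically through $v_j$ so as to fold the problematic integral into one against the controllable $\omega_{v_j}^n$.
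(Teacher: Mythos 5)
Your overall plan --- squeeze $\FSk \circ {\hbox{\rm H}}_k(u)$ between the strongly positive envelope $u_\delta = \tfrac{1}{\delta}P(\delta u)$ (via Ohsawa--Takegoshi) and a decreasing upper bound (via Bergman asymptotics), then pass to $d_p$-convergence --- is exactly the paper's strategy, and the monotonicity observation, the construction of $u_\delta$, and the peak-section lower bound are all correct.

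The gap is in the final conversion from a pointwise sandwich to a $d_p$-estimate, and you flag it yourself without resolving it. The two-sided comparison $d_p(f,g)^p \asymp \tfrac{1}{V}\int|f-g|^p(\omega_f^n + \omega_g^n)$ forces you to integrate against $\omega_{\FSk\circ{\hbox{\rm H}}_k(u)}^n$, which you cannot control, and neither of your two suggested repairs actually closes this hole. The fundamental inequality (Lemma~\ref{lem: fund_ineq}) only compares $\int|v|^p\omega_v^n$ with $\int|u|^p\omega_u^n$ for nested potentials; it does not let you bound $\int|u_\delta - \FSk\circ{\hbox{\rm H}}_k(u)|^p\,\omega_{\FSk\circ{\hbox{\rm H}}_k(u)}^n$, because the integrand is not of that form. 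And ``running the sandwich symmetrically through $v_j$'' still produces, via the same double estimate, an integral against $\omega_{\FSk\circ{\hbox{\rm H}}_k(u)}^n$, since on the $v_j$ side the Fubini--Study potential is now the \emph{smaller} of the two.

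The missing ingredient is a \emph{one-sided} version of the distance comparison: if $g \leq f$ in $\mathcal E^p_\o$, then $d_p(f,g)^p \leq \tfrac{1}{V}\int_X|f-g|^p\omega_g^n$, i.e.\ one may integrate against the smaller potential's Monge--Amp\`ere measure alone. This is \cite[Lemma 5.1]{Da15}, and it is precisely what the paper invokes: since $u_\delta - C/k \leq \FSk\circ{\hbox{\rm H}}_k(u)$, one gets
$$d_p\Big(\FSk\circ{\hbox{\rm H}}_k(u),\ u_\delta - \tfrac{C}{k}\Big)^p \leq \frac{1}{V}\int_X \Big|\FSk\circ{\hbox{\rm H}}_k(u) - u_\delta + \tfrac{C}{k}\Big|^p\,\omega_{u_\delta}^n,$$
and the problematic measure never appears. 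Dominated convergence (with the dominant coming from the uniform bound $\FSk\circ{\hbox{\rm H}}_k(u) \leq \sup_X u + C\tfrac{\log k}{k}$ of Proposition~\ref{prop: Hilb_kpoint}(ii), not from a smooth majorant $v_j$) and a.e.\ pointwise convergence (Proposition~\ref{prop: Hilb_kpoint}(iii)) then push the right side to $\tfrac{1}{V}\int|u-u_\delta|^p\omega_{u_\delta}^n \lesssim d_p(u,u_\delta)^p$, and the triangle inequality finishes. Alternatively, your sandwich $u_\delta - C/k \leq \FSk\circ{\hbox{\rm H}}_k(u) \leq v_j + \eta_{k,j}$ does close cleanly if combined with the monotonicity of $d_p$ along ordered triples (\cite[Lemma 4.2]{Da15}: $a \leq b \leq c$ implies $d_p(b,c) \leq d_p(a,c)$), which bounds $d_p(\FSk\circ{\hbox{\rm H}}_k(u), u_\delta - C/k)$ by $d_p(v_j + \eta_{k,j}, u_\delta - C/k)$ without ever touching Monge--Amp\`ere measures of the Fubini--Study side --- but as written you do not invoke this lemma, so the step remains incomplete.
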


\begin{proof} Since ${\FSk} \circ {{\hbox{\rm H}}}_k(u+c)={\FSk} \circ {{\hbox{\rm H}}}_k(u)+c$, we can assume that $u \leq -1$. Let $u_\delta := \frac{1}{\delta}P(\delta u) \in \mathcal E^p_\o$, and we keep in mind that by Proposition \ref{prop: E_p_approx_below} we can make $d_p(u,u_\delta)$ arbitrarily small by choosing $\delta >1$ close enough to $1$.

Fixing $\delta$ momentarily, choose $C >0$ and $k \geq k_0(\delta)$, as in part (i) of the next proposition. Using the triangle inequality we can start writing the following estimates:
\begin{flalign}\label{eq: d_p_triang_est}
d_p({\FSk} \circ {{\hbox{\rm H}}}_k (u),u) & \leq d_p\Big({\FSk} \circ {{\hbox{\rm H}}}_k (u),u_\delta - \frac{C}{k}\Big)+ d_p\Big(u_\delta - \frac{C}{k},u_\delta\Big) + d_p(u_\delta,u)\nonumber \\
& = d_p\Big({\FSk} \circ {{\hbox{\rm H}}}_k (u),u_\delta - \frac{C}{k}\Big)+ \frac{C}{k} + d_p(u_\delta,u)
\end{flalign}
From \cite[Lemma 5.1]{Da15} we have that
$$d_p\Big({\FSk} \circ {{\hbox{\rm H}}}_k (u),u_\delta - \frac{C}{k}\Big)^p \leq \frac{1}{V} \int_X \Big|{\FSk} \circ {{\hbox{\rm H}}}_k (u)-u_\delta + \frac{C}{k}\Big|^p \o_{u_\delta}^n.$$
Using the dominated convergence theorem and 
Proposition \ref{prop: Hilb_kpoint} below, we conclude that
$$\limsup_k d_p\Big({\FSk} \circ {{\hbox{\rm H}}}_k(u),u_\delta - \frac{C}{k}\Big)^p \leq \frac{1}{V}\int_X |u-u_\delta |^p \o_{u_\delta}^n \leq 2^
{2n+3p+3} d_p(u,u_\delta)^p,$$
where in the last step we used \cite[Theorem 3]{Da15} (see also \cite[Theorem 3.32]{Da18}). 
Putting this back into \eqref{eq: d_p_triang_est} and letting $k \to \infty$, followed by $\delta \searrow 1$, the result follows. 
\end{proof}

As promised in the above argument, we establish the following qualitative estimates and convergence rates for the operator $u \to {\FSk} \circ {{\hbox{\rm H}}}_k(u)$:
\begin{proposition}\label{prop: Hilb_kpoint}Suppose $u \in \mathcal E^p_\o$ and $\delta >1$. Then the following hold:\\
(i)$\frac{1}{\delta}P(\delta u) - \frac{C}{k}\leq {\FSk} \circ {{\hbox{\rm H}}}_k(u)$ for some $C >0$ and $k \geq k_0(\delta,\omega)$.\\
(ii)${\FSk} \circ {{\hbox{\rm H}}}_k(u) \leq \sup_X u + C \frac{\log k}{k}$ for some $C>0$.\\
(iii)${\FSk} \circ {{\hbox{\rm H}}}_k(u)(x) \to u(x)$ for any $x \in X$, away from a pluripolar set.
\end{proposition}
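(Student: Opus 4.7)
All three assertions are invariant under $u\mapsto u+c$ (using $P(\delta(u+c))=P(\delta u)+\delta c$ together with the relation $\FSk\circ{{\hbox{\rm H}}}_k(u+c)=\FSk\circ{{\hbox{\rm H}}}_k(u)+c$), so I first normalize to $u\le -1$ in order to invoke Proposition~\ref{prop: E_p_approx_below}, and introduce $\psi_\delta:=\tfrac{1}{\delta}P(\delta u)\in\mathcal E^p_\o$. My plan for (i) is to apply the Ohsawa--Takegoshi theorem (Theorem~\ref{thm: OhsTak}) with weight $\psi_\delta$. The key observation is that $\psi_\delta$ is barely strictly $\omega$-psh: since $P(\delta u)\in\textup{PSH}(X,\o)$, one has
$$\o+i\ddbar\psi_\delta=\Bigl(1-\tfrac{1}{\delta}\Bigr)\o+\tfrac{1}{\delta}\o_{P(\delta u)}\ge\Bigl(1-\tfrac{1}{\delta}\Bigr)\o.$$
For $k\ge k_0(\delta)$ and each $x\in X$, Theorem~\ref{thm: OhsTak} thus supplies a section $s\in H^0(X,L^k)$ with $s(x)\ne0$ and an $L^2$ peak estimate against the weight $e^{-k\psi_\delta}$. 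The pointwise inequality $\psi_\delta\le u$ (immediate from $P(\delta u)\le\delta u$) converts this into the same bound for ${{\hbox{\rm H}}}_k(u)(s,s)$, and the extremal formula \eqref{eq: FSk_extremal_def} then yields $\FSk\circ{{\hbox{\rm H}}}_k(u)(x)\ge\psi_\delta(x)-(\log C)/k$, proving (i).

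For (ii), I would apply the classical sub-mean value inequality to the plurisubharmonic function $|\tilde s|^2$ in a coordinate chart trivializing $L$ near $x$, with smooth local potential $\varphi$ for $h_L$. Choosing the ball radius $r=1/k$ ensures that $|\varphi(y)-\varphi(x)|\le C/k$ throughout $B(x,r)$, so the reweighting between $|\tilde s|^2$ and $|s|^2_{h_L^k}=|\tilde s|^2e^{-k\varphi}$ costs only a bounded multiplicative factor. Trivially bounding $u\le\sup_X u$ in order to trade $dV$ for $e^{-ku}\o^n$, and then extending the integral to $X$, gives
$|s(x)|^2_{h_L^k}\le\tilde Ck^{2n}e^{k\sup_X u}{{\hbox{\rm H}}}_k(u)(s,s)$; taking $\tfrac{1}{k}\log$ and supremum over ${{\hbox{\rm H}}}_k(u)$-unit sections yields (ii).

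For (iii) I combine (i) and (ii) with an upper-semicontinuity localization of (ii). The $\liminf$ bound follows from (i) and the pointwise convergence $\psi_\delta(x)\to u(x)$ as $\delta\searrow 1$ (Proposition~\ref{prop: E_p_approx_below}), which is valid on the complement of the pluripolar set $\{u=-\infty\}$. For the matching $\limsup$ bound at any $x$ with $u(x)>-\infty$, the upper semicontinuity of $u$ provides a neighborhood on which $u\le u(x)+\eta$; for $k$ large, $B(x,1/k)$ sits inside this neighborhood, so running the argument of (ii) \emph{locally} at $x$ (with $\sup_X u$ replaced by $u(x)+\eta$) produces $\FSk\circ{{\hbox{\rm H}}}_k(u)(x)\le u(x)+\eta+C(\log k)/k$. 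Sending $k\to\infty$ and then $\eta\to 0$ closes the argument.

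The main obstacle is part (i): Ohsawa--Takegoshi as stated requires strict $\omega$-positivity of the weight, which the typically very singular $u\in\mathcal E^p_\o$ does not provide, so an approximation by a strictly $\omega$-psh function still dominating $u$ from below is essential, and $\psi_\delta$ is precisely this substitute---trading $(1-1/\delta)\o$ worth of curvature for $\delta$-controlled proximity to $u$. Once the peak estimate for $\psi_\delta$ is in hand, exchanging the OT weight $e^{-k\psi_\delta}$ for the Hilbert weight $e^{-ku}$ rests on the elementary envelope inequality $\psi_\delta\le u$; parts (ii) and (iii) are then routine sub-mean-value arguments, augmented in (iii) by the upper semicontinuity of $u$.
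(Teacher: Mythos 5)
Your proposal is correct, and for part (i) it is essentially identical to the paper's argument: apply Ohsawa--Takegoshi with the strictly $\omega$-psh auxiliary potential $u_\delta=\tfrac1\delta P(\delta u)$ (whose curvature exceeds $\tfrac{\delta-1}{\delta}\o$), use $u_\delta\le u$ to pass from the $u_\delta$-weighted to the $u$-weighted integral, and invoke the extremal characterization \eqref{eq: FSk_extremal_def}.

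Your part (ii), however, is a genuinely different and more elementary route. The paper proves (ii) by monotonicity of $\FSk\circ{\hbox{\rm H}}_k$ in $u$ (reducing to the potential $0$) and then cites the Bouche--Catlin--Tian--Zelditch asymptotic expansion to bound $\FSk\circ{\hbox{\rm H}}_k(0)$ by $O(\tfrac{\log k}{k})$. You instead run a direct sub-mean value estimate for the local holomorphic representative of $s$ on a ball of radius $r=1/k$, absorbing the oscillation of the local weight $\varphi$ into a bounded factor via $|\varphi(y)-\varphi(x)|\le C/k$, and trading $dV$ for $e^{-ku}\o^n$ through the trivial bound $u\le\sup_X u$. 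This avoids the Bergman kernel expansion altogether at the price of a slightly worse (but still $O(\tfrac{\log k}{k})$) constant. For (iii), both you and the paper combine the lower bound of (i) with a sub-mean-value/Cauchy-type upper bound and the upper semicontinuity of $u$; the difference is merely organizational. The paper reruns the Cauchy estimate from scratch with a fixed radius $r$, letting $k\to\infty$ first and then $r\to0$; you reuse the $r=1/k$ estimate from your (ii) with $\sup_X u$ replaced locally by $u(x)+\eta$, which is a clean way to get the same conclusion. One small imprecision worth noting: the exceptional set where $P(\delta u)\not\nearrow u$ pointwise is pluripolar but need not coincide with $\{u=-\infty\}$ as you wrote; since the proposition only asserts convergence away from some pluripolar set, this does not affect the conclusion.
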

\begin{proof} From Proposition \ref{prop: E_p_approx_below} we know that $u_\delta:=\frac{1}{\delta}P(\delta u) \leq u$, $u_\delta \in \mathcal E^p_\o$, and trivially $\o_{u_\delta} > \frac{\delta - 1}{\delta} \o$. 

For $x \in X$ we can apply Theorem \ref{thm: OhsTak} for $u_\delta$ to conclude that for $k \geq k_0(\delta,\omega)$ there exists $s \in H^0(X,L^k)$ such that $s(x) \neq 0$ and
$$\int_X h_L^k(s,s)e^{-ku} \o^n \leq \int_X h_L^k(s,s)e^{-ku_\delta} \o^n \leq C  h_L^k(s(x),s(x))e^{-ku_\delta(x)}.$$ 
Using the extremal characterization of ${\FSk}$ from \eqref{eq: FSk_extremal_def} we obtain from the above that 
$$u_\delta(x) \leq \frac{1}{k}\log \bigg( \frac{h_L^k(s(x),s(x))}{\int_X h_L^k(s,s)e^{-ku} \o^n}\bigg) + \frac{\log(C)}{k} \leq {\FSk} \circ {{\hbox{\rm H}}}_k(u)(x) + \frac{\log(C)}{k}.$$
This establishes (i). 

Since $u \to {\FSk} \circ {{\hbox{\rm H}}}_k (u)$ is monotone we get that 
$${\FSk} \circ {{\hbox{\rm H}}}_k (u) \leq {\FSk} \circ {{\hbox{\rm H}}}_k (\sup_X u)=\sup_X u+{\FSk} \circ {{\hbox{\rm H}}}_k (0).$$ 
Using the asymptotic expansion of Bouche-Catlin-Tian-Zelditch (ii) follows. 

Lastly, we argue (iii) whose proof is adapted from the justification of \cite[Theorem 7.1]{GZ05}. We fix $s \in H^0(X,L^k)$. Pick $x \in X$ along with a coordinate neighborhood $B(x,2r)$ and a trivialization for $L$ on $B(x,2r)$. 
Using Cauchy's estimate in this local neighborhood, we can start writing:
$$|s(x)|^2 \leq \frac{C}{r^n}\int_{B(x,r)} |s(z)|^2.$$
On $B(x,2r)$ we use the trivialization $h_L = e^{-\varphi}$ for some $\varphi \in C^\infty(B(x,2r))$. Using the above estimate we can continue:
\begin{flalign*}
h_L^k(s(x),s(x))&=|s(x)|^2 e^{-k\varphi(x)} \leq C \frac{ e^{\sup_{B(x,r)}k\varphi}}{r^ne^{k\varphi(x)}}\int_{B(x,r)} h_L^k(s,s) \\
&\leq \frac{C}{r^n} e^{k [\sup_{B(x,r)} u+ \sup_{B(x,r)} \varphi - \varphi(x)]} \int_{X} h_L^k(s,s)e^{-ku}\o^n. 
\end{flalign*}
Fixing $\varepsilon >0$, we can choose the same $r>0$ for all $x \in X$ such that $\sup_{B(x,r)} \varphi - \varphi(x) \leq \varepsilon$. Consequently, the extremal characterization of ${\FSk}$ \eqref{eq: FSk_extremal_def} implies that 
$${\FSk} \circ {{\hbox{\rm H}}}_k (u)(x) \leq \frac{C(r)}{k} + \sup_{B(x,r)} u + {\varepsilon}.$$
Letting $k \to \infty$ we obtain that $\limsup_k {\FSk} \circ {{\hbox{\rm H}}}_k (u)(x) \leq \sup_{B(x,r)} u + \varepsilon$ for all $x \in X$. Using plurisubharmonicity we have $\limsup_{y \to x}u(y)=u(x)$, hence we can let $r \to 0$, and then $\varepsilon \to 0$, to conclude that 
$$\limsup_k {\FSk} \circ {{\hbox{\rm H}}}_k(u)(x) \leq u(x).$$
Now from (i) we have that $P(\delta u) \leq \frac{1}{\delta}P(\delta u) \leq \liminf_k {\FSk} \circ {{\hbox{\rm H}}}_k (u)$. Proposition \ref{prop: E_p_approx_below} implies that $P(\delta u)$ increases a.e. to $u$. Putting everything together, we obtain that  ${\FSk} \circ {{\hbox{\rm H}}}_k (u)(x) \to u(x)$ for any $x \in X$ away from a pluripolar set.
\end{proof}

\subsection{Quantization of geodesics}

Our aim in this subsection is to prove the following result:

\begin{theorem} \label{thm: geod_quant} Suppose $u_0,u_1 \in \mathcal E^p_\o$ and $[0,1] \ni t \to u_t \in \mathcal E^p_\o$ is the $L^p$-finite-energy geodesic connecting $u_0,u_1$. Let $[0,1] \ni t \to U^k_t \in \mathcal H_k$ be the $L^p$-Finsler geodesic joining $U_0^k={{\hbox{\rm H}}}_k(u_0)$ and $U_1^k={{\hbox{\rm H}}}_k(u_1)$. Then 
$$d_p({\FSk}(U^k_t), u_t) \to 0  \ \textup{ as } \ k \to \infty \ \textup{ for any } \ t \in [0,1].$$
\end{theorem}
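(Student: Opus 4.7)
The strategy is to squeeze $\FSk(U^k_t)$ between two approximations that both converge to $u_t$ in $d_p$ as $k \to \infty$. The lower bound will follow from the quantized maximum principle (Corollary \ref{cor: quant_max_princ}) applied to a strongly positive approximation of the finite-energy geodesic; the upper bound will come from the fact that $s \mapsto \FSk(U^k_s)$ is itself a smooth weak subgeodesic on $\calHo$, together with a Finsler contraction estimate relating its $d_p$-speed to the matrix $L^p$ Finsler speed of $U^k_s$. The theorem will then follow from a rigidity argument exploiting that $u_t$ is the supremum of all weak subgeodesics from $u_0$ to $u_1$.

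\textbf{Lower bound.} To put ourselves into the setting of Corollary \ref{cor: quant_max_princ}, first approximate the endpoints from below. Normalize $u_0, u_1 \leq -1$ and, via Proposition \ref{prop: E_p_approx_below}, set $u_i^\delta := \tfrac{1}{\delta} P(\delta u_i)$, so that $u_i^\delta \in \mathcal{E}^p_\o$ satisfies $u_i^\delta \leq u_i$ and $d_p(u_i^\delta, u_i) \to 0$ as $\delta \searrow 1$. Let $t \mapsto v_t^\delta$ be the finite-energy geodesic between $u_0^\delta$ and $u_1^\delta$, and construct an auxiliary subgeodesic $w_t^{\delta, \varepsilon}$ which (a) satisfies the strict positivity hypothesis of Corollary \ref{cor: quant_max_princ} and (b) still obeys $w_i^{\delta, \varepsilon} \leq u_i$ for $i = 0, 1$. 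Since ${{\hbox{\rm H}}}_k$ is order-reversing, $U^k_0 = {{\hbox{\rm H}}}_k(u_0) \leq {{\hbox{\rm H}}}_k(w_0^{\delta, \varepsilon})$ and similarly at $t = 1$, so Corollary \ref{cor: quant_max_princ} gives $U^k_t \leq {{\hbox{\rm H}}}_k(w_t^{\delta, \varepsilon})$ for all large $k$. Because $\FSk$ is also order-reversing, we conclude
\[
\FSk(U^k_t) \;\geq\; \FSk({{\hbox{\rm H}}}_k(w_t^{\delta, \varepsilon})) \;\xrightarrow[k \to \infty]{d_p}\; w_t^{\delta, \varepsilon},
\]
where the convergence is Theorem \ref{thm: point_quant}. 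Using Proposition \ref{prop: Hilb_kpoint}(iii) to pass to pointwise a.e.\ convergence along subsequences, and letting $\varepsilon \to 0$ and then $\delta \searrow 1$, one obtains $\liminf_k \FSk(U^k_t) \geq u_t$ almost everywhere.

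\textbf{Upper bound and conclusion.} Diagonalizing $U^k_1$ with respect to $U^k_0$ with eigenvalues $e^{\lambda_1}, \ldots, e^{\lambda_{\textup{d}_k}}$ in an orthonormal basis $\{e_j\}$ of $H^0(X, L^k)$, we have $U^k_s = \textup{diag}(e^{s \lambda_j})$, so
\[
\FSk(U^k_s)(x) \;=\; \tfrac{1}{k}\log \sum_j e^{-s \lambda_j} |e_j(x)|^2_{h_L^k}
\]
is manifestly $\pi^*\o$-psh on $S \times X$; in particular, $s \mapsto \FSk(U^k_s)$ is a smooth weak subgeodesic in $\calHo$. A Jensen-type calculation (using the Bergman weights $w_j(x) := e^{-s\lambda_j} |e_j(x)|^2_{h_L^k} / \sum_i e^{-s\lambda_i}|e_i(x)|^2_{h_L^k}$ and the equidistribution identity $\int_X w_j \, \o_{\FSk(U^k_s)}^n = V/\textup{d}_k$) bounds the $d_p$-Finsler speed of $s \mapsto \FSk(U^k_s)$ by the matrix $L^p$ Finsler speed of $U^k_s$, yielding
\[
d_p(\FSk(U^k_0), \FSk(U^k_t)) \;\leq\; t \cdot d_{p, k}(U^k_0, U^k_1).
\]
Combining with Theorem \ref{thm: point_quant} at $s = 0, 1$ and the distance quantization (Theorem \ref{thm: L^p_Quant_intr}(ii), established earlier in the paper) gives
\[
\limsup_k d_p(\FSk(U^k_t), u_i) \;\leq\; |t - i| \, d_p(u_0, u_1), \qquad i \in \{0, 1\}.
\]
Every $d_p$-subsequential limit $v$ of $\{\FSk(U^k_t)\}$ therefore lies on some minimizing $d_p$-geodesic from $u_0$ to $u_1$ at parameter $t$, and by the lower bound also satisfies $v \geq u_t$ a.e. Any such $v$ is of the form $\hat v_t$ for a weak subgeodesic $\hat v$ joining $u_0, u_1$, hence $v \leq u_t$ a.e.\ by maximality of $u_t$; this forces $v = u_t$, and the full sequence $d_p$-converges to $u_t$.

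\textbf{Main obstacle.} The crux of the argument is the construction of $w_t^{\delta, \varepsilon}$: finite-energy subgeodesics generically fail the strict $\pi^*\o$-positivity required by Corollary \ref{cor: quant_max_princ}, so $v^\delta$ itself cannot be fed in directly. The excerpt explicitly flags this approximation as a novelty of the paper, so presumably the construction combines a careful smoothing of the endpoints with a barrier argument on $S \times X$. A secondary nontrivial ingredient is the Finsler contraction estimate used for the upper bound, which must be established as a separate lemma via Jensen's inequality and the equidistribution of Bergman weights for orthonormal sections.
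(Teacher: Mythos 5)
Your overall strategy---squeeze $\FSk(U^k_t)$ between a lower barrier coming from the quantized maximum principle and an upper barrier tied to the endpoint data, then let $\delta \searrow 1$---is exactly the paper's, but the two load-bearing steps are either missing or, as stated, incorrect.

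\textbf{The lower bound: the auxiliary subgeodesic is not constructed.} You correctly identify that $v^\delta$ cannot be fed to Corollary \ref{cor: quant_max_princ} directly and flag the construction of a strictly positive subgeodesic $w^{\delta,\varepsilon}$ with $w_i^{\delta,\varepsilon}\le u_i$ as the main obstacle, but you then leave it as a placeholder. The paper's resolution is short and should be recorded: with $u_0,u_1\le -1$, let $t\mapsto v_{\delta,t}$ be the finite-energy geodesic between $P(\delta u_0)$ and $P(\delta u_1)$ (\emph{not} the $\frac1\delta$-scaled potentials), and set $u_{\delta,t}:=\frac1\delta v_{\delta,t}$. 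Then $\pi^*\o+\i\ddbar u_\delta = \frac1\delta(\pi^*\o+\i\ddbar v_\delta)+\big(1-\frac1\delta\big)\pi^*\o\ge\frac{\delta-1}{\delta}\pi^*\o$, and the endpoints $\frac1\delta P(\delta u_i)\le u_i$ because $P(\delta u_i)\le \delta u_i\le u_i\le -1$. This single scaling trick delivers both the strict positivity and the endpoint comparison you need; no extra $\varepsilon$-parameter or barrier on $S\times X$ is required.

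\textbf{The upper bound: the Finsler contraction as written is false, and unnecessary.} The claimed identity $\int_X w_j\,\o^n_{\FSk(U^k_s)}=V/\textup{d}_k$ for the Bergman weights $w_j$ is the balancing (center-of-mass) condition; it holds only for balanced metrics, not for the arbitrary Hermitian forms $U^k_s$ arising here. Without it, your exact inequality $d_p(\FSk(U^k_0),\FSk(U^k_t))\le t\,d_{p,k}(U^k_0,U^k_1)$ does not follow from Jensen alone; one could recover an asymptotic substitute via Bergman kernel expansions (this is precisely how Theorem \ref{thm: dist_quant} is proved), but then you would be re-deriving a hard quantitative estimate just to feed it into a further rigidity argument. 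The paper bypasses all of this: since $t\mapsto\FSk(U^k_t)$ is a weak subgeodesic joining $u^k_0:=\FSk\circ\textup{H}_k(u_0)$ and $u^k_1:=\FSk\circ\textup{H}_k(u_1)$, the comparison principle gives $\FSk(U^k_t)\le u^k_t$, the finite-energy geodesic between $u^k_0,u^k_1$; Theorem \ref{thm: point_quant} gives $d_p(u^k_i,u_i)\to 0$, and endpoint stability of finite-energy geodesics gives $d_p(u^k_t,u_t)\to 0$. Combined with the lower barrier $\FSk\circ\textup{H}_k(u_{\delta,t})\le\FSk(U^k_t)\le u^k_t$ and the monotonicity lemma {[Da15, Lemma 4.2]} one gets $\limsup_k d_p(\FSk(U^k_t),u_t)\le 2d_p(v_{\delta,t},u_t)$, which tends to $0$ as $\delta\searrow 1$, again by endpoint stability. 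This also makes your final rigidity step (uniqueness of $d_p$-geodesics plus identification of subsequential limits with envelope subgeodesics) unnecessary; it could be made to work, but it introduces additional structural facts that the direct squeeze avoids.
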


\begin{proof} As before, we can assume without loss of generality that $u_0,u_1 \leq -1$. Let $u^k_0 := {\FSk} \circ {{\hbox{\rm H}}}_k(u_0) \in \mathcal H_\o$ and $u^k_1 := {\FSk} \circ {{\hbox{\rm H}}}_k(u_1)\in \mathcal H_\o$. 

Based on local properties of psh functions, it is well known that $[0,1] \ni t \to {\FSk}(U_t) \in \mathcal H_\o$ joing $u^k_0$ and $u^k_1$ is a subgeodesic hence the comparison principle gives the estimate:
\begin{flalign}\label{eq: geod_lower_est}
{\FSk}(U^k_t) \leq u^k_t, \ t \in [0,1],
\end{flalign}
where $[0,1] \ni t \to u^k_t \in \mathcal H_\o^{1\bar 1}$ is the weak $C^{1\bar 1}$-geodesic joining $u^k_0$ and $u^k_1$. 

Next we fix $\delta > 1$ and we introduce $u_{\delta,0} := \frac{1}{\delta}P(\delta u_0), \ u_{\delta,1} := \frac{1}{\delta}P(\delta u_1)$. Proposition \ref{prop: E_p_approx_below}
tells us that $u_{\delta,0},u_{\delta,1} \in \mathcal E^p_\o$. With $[0,1] \ni t \to v_{\delta,t} \in \mathcal E^p_\o$ being the finite-energy geodesic connecting $P(\delta u_0),P(\delta u_1) \in \mathcal E^p_\o$, we consider the following subgeodesic connecting $u_{\delta,0}$ and $u_{\delta,1}$:
$$[0,1] \ni t \to u_{\delta,t}:= \frac{1}{\delta} v_{\delta,t} \in \mathcal E^p_\o.$$
It is clear that this last curve is a subgeodesic, with the property that
$$\pi_{S \times X}^* \o + i\ddbar u_{\delta,t} \geq \frac{\delta -1}{\delta}\pi_{S \times X}^* \o.$$
As a result, Corollary \ref{cor: quant_max_princ} is applicable to $t \to u_{\delta,t}$, and we obtain that for $k \geq k_0(\delta)$ 
$$
U_t^k \leq  {{\hbox{\rm H}}}_k(u_{\delta,t}), \ t \in [0,1].
$$
Applying ${\FSk}$ to this inequality we conclude that 
\begin{flalign}\label{eq: geod_upper_est}
{\FSk} \circ {{\hbox{\rm H}}}_k(u_{\delta,t}) \leq {\FSk}(U^k_t), \ t \in [0,1].
\end{flalign}
Using the triangle inequality we can start writing the following:
\begin{flalign*}
d_p(  {\FSk}(U^k_t),u_t) & \leq d_p({\FSk}(U^k_t),{\FSk} \circ {{\hbox{\rm H}}}_k(u_{\delta,t})) + d_p({\FSk} \circ {{\hbox{\rm H}}}_k(u_{\delta,t}),u_t)\\
& \leq d_p(u^k_t, {\FSk} \circ {{\hbox{\rm H}}}_k (u_{\delta, t}))+d_p({\FSk} \circ {{\hbox{\rm H}}}_k(u_{\delta,t}),u_t),
\end{flalign*}
where in the first line we have used the triangle inequality, and in the second line we have used \eqref{eq: geod_lower_est}, \eqref{eq: geod_upper_est} along with \cite[Lemma 4.2]{Da15} (and its short proof). 

By the endpoint stability of finite-energy geodesics \cite[Proposition 4.3]{BDL17} it follows that $d^p(u^k_t,u_t) \to 0$ as $k \to \infty$. On the other hand,  Theorem \ref{thm: point_quant} gives that $d_p({\FSk} \circ {{\hbox{\rm H}}}_k(u_{\delta,t}),u_{\delta,t}) \to 0$. As a result, letting $k \to \infty$ in our last estimate we conclude that  
\begin{flalign*}
\limsup_k d_p( & {\FSk}(U^k_t),u_t) \leq  2d_p(u_{\delta, t},u_t) \leq 2d_p(v_{\delta,t},u_t),
\end{flalign*}
where in the last inequality we used again \cite[Lemma 4.2]{Da15} (and its short proof). Using the endpoint stability of finite-energy geodesics one more time, 
we let $\delta \searrow 1$ and conclude that $\limsup_k d_p({\FSk}(U_t),u_t)=0$.
\end{proof}
\subsection{Quantization of distance}
We recall that in \eqref{eq: d^k_p_def_intr} we introduced the following metric on $\mathcal H_k$ for $p \geq 1$:
\begin{equation}\label{eq: d_{p,k}_def}
d_{p,k} := \frac{1}{k} d_{p,\P_{\textup{d}_k}},
\end{equation}
where $\P_{\textup{d}_k}$ is identified with $\mathcal H_k$. Since the underlying $L^p$ Finsler structure on $\Pn$ is independent of change of basis on $\Bbb C^{\textup{d}_k}$, this definition does indeed make sense. 

In this subsection we prove the following theorem:
\begin{theorem}\label{thm: dist_quant} Let $v_0,v_1 \in \mathcal E^p_\o$. Then $\lim_k d_{p,k}({{\hbox{\rm H}}}_k(v_0), {{\hbox{\rm H}}}_k(v_1)) = d_p(v_0,v_1).$
\end{theorem}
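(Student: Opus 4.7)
My plan is to prove the two matching one-sided estimates
\[
\liminf_k d_{p,k}({{\hbox{\rm H}}}_k(v_0),{{\hbox{\rm H}}}_k(v_1)) \ge d_p(v_0,v_1), \qquad \limsup_k d_{p,k}({{\hbox{\rm H}}}_k(v_0),{{\hbox{\rm H}}}_k(v_1)) \le d_p(v_0,v_1),
\]
using Theorems \ref{thm: point_quant} and \ref{thm: geod_quant} (point and geodesic quantization, already established in this section) as principal inputs.

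For the lower bound, I would let $[0,1]\ni t\mapsto U^k_t\in\mathcal H_k$ be the $L^p$-Finsler geodesic joining ${{\hbox{\rm H}}}_k(v_0)$ and ${{\hbox{\rm H}}}_k(v_1)$, so that its $\mathcal H_k$--length equals $d_{p,k}({{\hbox{\rm H}}}_k(v_0),{{\hbox{\rm H}}}_k(v_1))$. The image curve $t\mapsto \FSk(U^k_t)\in \calHo$ is a weak subgeodesic. In a $U^k_t$-orthonormal basis $\{e_j\}$ diagonalizing $\dot U^k_t$ as a $U^k_t$-self-adjoint operator with eigenvalues $\mu_j$, one computes
\[
\tfrac{d}{dt}\FSk(U^k_t)(x) = \tfrac{1}{k}\frac{\sum_j \mu_j|e_j(x)|^2_{h_L^k}}{\sum_j |e_j(x)|^2_{h_L^k}},
\]
a pointwise convex combination of $\mu_j/k$. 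Applying Jensen pointwise, integrating against $\o_{\FSk(U^k_t)}^n$, and using $\sum_j|e_j|^2_{h_L^k}=e^{k\FSk(U^k_t)}$ should yield the length-contraction
\[
L^p_{\calHo}\bigl(t\mapsto \FSk(U^k_t)\bigr) \le d_{p,k}({{\hbox{\rm H}}}_k(v_0),{{\hbox{\rm H}}}_k(v_1)).
\]
Combined with the definitional bound $d_p(\FSk(U^k_0),\FSk(U^k_1))\le L^p_{\calHo}(\FSk(U^k_\cdot))$ and the endpoint convergence $d_p(\FSk(U^k_j),v_j)\to 0$ from Theorem \ref{thm: point_quant}, the triangle inequality in $d_p$ yields the lower bound.

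For the upper bound the argument should be symmetric with the roles of ${{\hbox{\rm H}}}_k$ and $\FSk$ swapped. First, using Proposition \ref{prop: E_p_approx_below}, one reduces to smooth strictly $\o$-psh endpoints $v_0^\eps,v_1^\eps\in\calHo$ with $d_p(v_j^\eps,v_j)\to 0$, for which the $d_p$-geodesic $t\mapsto u^\eps_t$ between them is Chen's $C^{1\bar 1}$ geodesic. A mirror contraction for ${{\hbox{\rm H}}}_k$ along this smooth curve should give that the $\mathcal H_k$--length of $t\mapsto {{\hbox{\rm H}}}_k(u^\eps_t)$ is at most $d_p(v_0^\eps,v_1^\eps)+o_k(1)$, so $d_{p,k}({{\hbox{\rm H}}}_k(v_0^\eps),{{\hbox{\rm H}}}_k(v_1^\eps))\le d_p(v_0^\eps,v_1^\eps)+o_k(1)$. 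Sending $k\to\infty$ and then $\eps\to 0$, with the approximation errors absorbed via the already-proved lower bound together with Theorem \ref{thm: point_quant} to control continuity of ${{\hbox{\rm H}}}_k$ in $d_{p,k}$, gives the upper bound.

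The main obstacle I anticipate is making the length-contraction rigorous. After Jensen one obtains $\tfrac{1}{k^p}\sum_j|\mu_j|^p\int_X w_j\,\o_{\FSk(U^k_t)}^n$ with Bergman weights $w_j=|e_j|^2_{h_L^k}/\sum_m|e_m|^2_{h_L^k}$; equality of each $\int_X w_j\,\o_{\FSk(U^k_t)}^n$ with $V/\textup{d}_k$ amounts to the balanced-metric condition and in general holds only asymptotically in $k$. In line with this paper's stated philosophy of avoiding full Bergman kernel asymptotics, an alternative is to replace the variational length computation by a direct pinching argument: use the quantized maximum principle (Corollary \ref{cor: quant_max_princ}) together with Lemma \ref{lem: quant_max_princ_deriv} to sandwich $d_{p,k}({{\hbox{\rm H}}}_k(v_0),{{\hbox{\rm H}}}_k(v_1))$ between two expressions built from iterates of $\FSk\circ{{\hbox{\rm H}}}_k$ applied to the monotone approximants $\tfrac{1}{\delta}P(\delta v_j)$ of Proposition \ref{prop: E_p_approx_below}, each of which converges to $d_p(v_0,v_1)$ by Theorems \ref{thm: point_quant} and \ref{thm: geod_quant}.
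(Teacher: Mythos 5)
There is a genuine gap, and you yourself put your finger on it: the pointwise Jensen bound does \emph{not} integrate to the claimed contraction of lengths, because
\[
\frac{1}{V}\int_X \frac{|e_j|^2_{h_L^k}}{\sum_m|e_m|^2_{h_L^k}}\,\o_{\FSk(U^k_t)}^n
\]
equals $1/\textup{d}_k$ only for balanced metrics. This is not a technicality one can patch around: $\FSk$ is an $L^\infty$-contraction by monotonicity of the Bergman kernel, but it is simply not an $L^p$-contraction in general, so the lower-bound argument collapses. The upper bound has a symmetric problem. Moreover the reduction you invoke is off: Proposition \ref{prop: E_p_approx_below} produces approximants $\frac{1}{\delta}P(\delta u)$ that are strictly $\o$-psh (which is what one needs for the Ohsawa--Takegoshi input and the quantized maximum principle) but are \emph{not} smooth, so you cannot then appeal to Chen's $C^{1\bar 1}$ geodesic between them. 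And invoking Theorem \ref{thm: point_quant} to ``control continuity of $H_k$ in $d_{p,k}$'' is circular: continuity of $v \mapsto H_k(v)$ from $(\mathcal E^p_\o,d_p)$ into $(\mathcal H_k,d_{p,k})$ is precisely what Theorem \ref{thm: dist_quant} establishes.

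The paper's actual proof is architecturally different and you are missing its two key ingredients. First, it takes Berndtsson's result for \emph{smooth} endpoints (cited as Theorem 3.3 of \cite{Bern09}) as a black box: $d_{p,k}(H_k(v_0^l),H_k(v_1^l)) \to d_p(v_0^l,v_1^l)$ when $v_0^l, v_1^l\in\mathcal H_\o$. The remaining task is then reduced, via the triangle inequality, to showing $\limsup_k d_{p,k}(H_k(v_0^l),H_k(v_0))$ is small when $d_p(v_0^l,v_0)$ is small. Second, this step is carried out by sandwiching $v_0$ between $\phi_0 := \frac{1}{\delta}\psi_1 + \frac{\delta-1}{\delta}v_0^l$ and $\phi_1 := v_0^l$ (where $\psi_t$ is a finite-energy geodesic from $P(\delta v_0)$ to $v_0^l$), invoking a Lidskii-type monotonicity for tangent vectors at the common endpoint, and then applying Corollary \ref{cor: quant_max_princ} and Lemma \ref{lem: quant_max_princ_deriv}; the last bound is closed using the Bergman kernel expansion against the \emph{fixed smooth} weight $e^{-kv_0^l}\o^n$ (where it is legitimate). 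Your stated intent to avoid Bergman asymptotics entirely misreads the paper's philosophy: those asymptotics are unavoidable and are indeed used, but only on smooth residual data, never on the singular potential itself. The vague ``pinching'' alternative you sketch at the end is the right instinct, but without Berndtsson's smooth-case theorem and without the explicit tangent-vector monotonicity/derivative comparison it cannot be made to close.
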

\begin{proof} As usual, without loss of generality we can assume that $v_0,v_1 \leq -1$.
Let $v^l_0,v^l_1 \in \mathcal H_\o$ be decreasing sequences of negative potentials converging to $v_0,v_1$. As a consequence of \cite[Theorem 3.3]{Bern09} it follows that 
$$d_{p,k}({{\hbox{\rm H}}}_k(v^l_0),{{\hbox{\rm H}}}_k(v^l_0)) \to d_p(v^l_0,v^l_1) \ \textup{ as } \ k \to \infty.$$
Using the triangle inequality for $d_{p,k}$ and $d_p$, it suffices to prove that for any $\varepsilon >0$  there exist  $l_0$ such that for all $l \geq l_0$ we have:
\begin{flalign}\label{eq: eps_est}
\limsup_{k \to \infty} d_{p,k}({{\hbox{\rm H}}}_k(v^l_0),{{\hbox{\rm H}}}_k(v_0)) \leq \varepsilon  \ \textup{ and } 
\ \limsup_{k \to \infty} d_{p,k}({{\hbox{\rm H}}}_k(v^l_1),{{\hbox{\rm H}}}_k(v_1)) \leq \varepsilon.
\end{flalign}

It is enough to show this estimate for $v_0$. We will do this using an argument similar in spirit to that of Theorem \ref{thm: geod_quant}. Let us fix $\delta >1$ and $l \in \Bbb N$ momentarily.

Let $[0,1] \ni t \to \psi_t \in \mathcal E^p_\o$ be the (increasing) finite-energy geodesic connecting $P(\delta v_0)$ and $v^l_0$. We also introduce a related increasing subgeodesic, connecting $\frac{1}{\delta}P(\delta v_0) + \frac{\delta -1}{\delta} v^l_0$ and $v^l_0$:
$$[0,1] \ni t \to \phi_{t}:=\frac{1}{\delta}\psi_t  + \frac{\delta -1}{\delta} v^l_0\in \mathcal E^p_\o$$
Since all the potentials involved are negative, we notice the following sequence of inequalities
$$\phi_0 = \frac{1}{\delta}P(\delta v_0) + \frac{\delta -1}{\delta} v^l_0\leq v_0 \leq v^l_0 =\phi_1.$$
Using montonicity of ${{\hbox{\rm H}}}_k$ we automatically get:$${{\hbox{\rm H}}}_k(\phi_1) = {{\hbox{\rm H}}}_k(v^l_0) \leq {{\hbox{\rm H}}}_k(v_0)\leq {{\hbox{\rm H}}}_k(\phi_0).$$
From here, by comparing tangent vectors for geodesics at $t=1$, we deduce that 
\begin{flalign}\label{eq: Hilb_k_estimates}
d_{p,k}({{\hbox{\rm H}}}_k(v_0),{{\hbox{\rm H}}}_k(v^l_0)) = d_{p,k}({{\hbox{\rm H}}}_k(v_0),{{\hbox{\rm H}}}_k(\phi_1)) \leq d_{p,k}({{\hbox{\rm H}}}_k(\phi_0),{{\hbox{\rm H}}}_k(\phi_1)).
\end{flalign}
Before we continue, we notice that
$$\pi^*_{S \times X} \o + \i\ddbar \phi_t \geq \frac{\delta -1}{\delta}\pi^*_{S \times X} \o_{v^l_0}.$$ 
Since $v^l_0 \in \mathcal H_\o$, Corollary \ref{cor: quant_max_princ} is applicable to $t \to \phi_t$, and  we obtain that for all $k \geq k_0(l,\delta)$ the following estimate holds:
$$
V_t \leq {{\hbox{\rm H}}}_k(\phi_t), \ t \in [0,1],
$$
where $t \to V_t$ is the (decreasing) finite-dimensional $L^p$-Finsler geodesic joining $V_0:={{\hbox{\rm H}}}_k(\phi_0)$ and $V_1:={{\hbox{\rm H}}}_k(\phi_1).$ Since $t \to V_t$ and $t \to {{\hbox{\rm H}}}_k(\phi_t)$ share the same endpoints, and $t\to\phi_t$ is increasing, we can use Lemma \ref{lem: quant_max_princ_deriv} to conclude that for any $s \in H^0(X,L^k)$ we have
\begin{equation}\label{eq: V_est}
-\frac{1}{\delta}\int_X \dot \psi_1 h_L^k(s ,s)e^{-k v^l_0} \o^n \leq \frac{1}{k} \frac{d}{dt}\Big|_{t=1} V_t(s,s) \leq 0,
\end{equation}
where the last inequality follows since $t \to V_t$ is decreasing. Note that by Lemma \ref{lem: d_p_dist_smooth_endpoint} below we have that the left hand side is finite. 

Recall that $\textup{d}_k = {\dim H^0(X,L^k)}$, and let $\{ e_j \}_{j=1 \ldots \textup{d}_k}$ be a $V_1$-orthonormal basis of $H^0(X,L^k)$ for which the following Hermitian form is diagonal with eigenvalues  $\{{\lambda_j}\}_{j=1...\textup{d}_k}$:
\begin{flalign*}
(s,s') \to -\frac{1}{\delta}\int_X \dot \psi_1 h_L^k(s ,s')e^{-k v^l_0} \o^n.
\end{flalign*}
Putting together \eqref{eq: d_p_Q_def}, \eqref{eq: d_{p,k}_def} and \eqref{eq: V_est} we can initiate the following sequence of estimates:
\begin{flalign*}
d_{p,k}({{\hbox{\rm H}}}_k(\phi_0),{{\hbox{\rm H}}}_k(\phi_1))^p &= \frac{1}{k^p\textup{d}_k} \textup{tr}\big[\big|V_1^{-1} \dot V_1\big|^p\big] \leq  
\frac{1}{\textup{d}_k}  \sum_{j=1}^{\textup{d}_k}|\lambda_j|^p \\
&= \frac{1}{\textup{d}_k} \sum_{j=1}^{\textup{d}_k}\bigg|\frac{1}{\delta}\int_X \dot \psi_1 h_L^k(e_j,e_j)e^{-k v^l_0} \o^n \bigg|^{p}\\
& \leq \frac{1}{\textup{d}_k\delta^p}  \sum_{j=1}^{\textup{d}_k} \int_X |\dot \psi_1|^p h_L^k(e_j,e_j)e^{-k v^l_0} \o^n \\
& = \frac{1}{\delta^p}  \int_X |\dot \psi_1|^p \bigg[\frac{1}{\textup{d}_k}\sum_{j=1}^{\textup{d}_k}  h_L^k(e_j,e_j)e^{-k v^l_0}\o^n \bigg], 
\end{flalign*}
where in the third line we have used the convexity of $t \to |t|^p$ and  the fact that $h^k_L(e_j,e_j)e^{-kv_0^l}\omega^n$ is a probability measure on $X$.

Looking at the last line, from (the twisted version) of the classical Bergman kernel asymptotic expansion (see \cite[\S4.1]{MM} or \cite[Section 2.5]{BBS08}), and the fact that $\textup{d}_k/k^n \to V$,
it readily follows that the expression in the square brackets converges uniformly to $\frac{1}{V}\o^n_{v^{l}_0}$, as $k \to \infty$. Consequently we get that:$$\limsup_k d_{p,k}({{\hbox{\rm H}}}_k(\phi_0), {{\hbox{\rm H}}}_k(\phi_1))^p \leq \frac{1}{V \delta^p}  \int_X |\dot \psi_1|^p \o^n_{v^{l}_0}.$$
Putting this together with \eqref{eq: Hilb_k_estimates} we deduce that
$$\limsup_k d_{p,k}({{\hbox{\rm H}}}_k(v_0), {{\hbox{\rm H}}}_k(v^l_0))^p \leq \frac{1}{V \delta^p}  \int_X |\dot \psi_1|^p \o^n_{v^{l}_0} = \frac{1}{\delta^p}d_p(P(\delta v_0),v^l_0)^p,$$
where in the last step identity we used Lemma \ref{lem: d_p_dist_smooth_endpoint}, argued below.
Taking the limit $\delta \searrow 1$ we obtain in turn that 
$$\limsup_k d_{p,k}({{\hbox{\rm H}}}_k(v_0), {{\hbox{\rm H}}}_k(v^l_0))^p \leq  d_p(v_0,v^l_0)^p.$$
Since $d_p(v_0,v^l_0)$ becomes arbitrarily small as $l \to \infty$, this fully justifies \eqref{eq: eps_est}.
\end{proof}

\begin{lemma}\label{lem: d_p_dist_smooth_endpoint} Suppose that $u_0 \in \mathcal E^p_\o$ and $u_1 \in \mathcal H_\o$ with $u_0 \leq u_1$. Then we have that
$$d_p(u_0,u_1)^p = \int_X |\dot u_1|^p \o_{u_1}^n,$$
where $[0,1]\ni t \to u_t \in \mathcal E^p_\o$ is the finite-energy geodesic connecting $u_0,u_1$.
\end{lemma}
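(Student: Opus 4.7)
The plan is to reduce the identity to the smooth case, where it follows from the fact that the $L^p$-Finsler speed is constant along a $C^{1\bar 1}$-geodesic, and then pass to the limit via monotone convergence. First, I would choose a sequence $u_0^l\in\mathcal H_\omega$ with $u_0^l\searrow u_0$ pointwise and $u_0^l\leq u_1$. Such a sequence exists: starting from a Demailly-type smooth decreasing approximation $\tilde u_0^l\searrow u_0$, one adjusts the sequence by a regularized minimum against $u_1$, or by subtracting a decreasing sequence of constants absorbing $\sup_X(\tilde u_0^l-u_1)^+\to 0$, using that $u_0\leq u_1$ and $u_1$ is smooth.

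By Chen's theorem, the weak geodesic $[0,1]\ni t\to u_t^l$ joining $u_0^l$ and $u_1$ is $C^{1\bar 1}$ on $\overline{S}\times X$; because $u_0^l\leq u_1$, the comparison principle forces $t\to u_t^l$ to be non-decreasing. For this smooth family, a standard calculation in the $L^p$-Finsler geometry of $\mathcal H_\omega$ (parallel to Theorem \ref{thm: Hn_+_metric_geod} above, and detailed in \cite{Da15}) shows that the speed $\|\dot u_t^l\|_{p,u_t^l}$ is independent of $t$ and equals $d_p(u_0^l,u_1)$. Specializing to $t=1$, where $\omega_{u_1^l}^n=\omega_{u_1}^n$, yields
$$d_p(u_0^l,u_1)^p=\frac{1}{V}\int_X|\dot u_1^l|^p\,\omega_{u_1}^n.$$

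To conclude, I pass $l\to\infty$. Endpoint stability of the $L^p$-finite-energy geodesics and of $d_p$ under decreasing sequences (cf. \cite[Proposition 4.3]{BDL17}) yields $d_p(u_0^l,u_1)\to d_p(u_0,u_1)$. For the right-hand side, the key is monotonicity at $t=1$: by comparison of geodesics, $u_t\leq u_t^{l+1}\leq u_t^l$ for all $l,t$, and $u_t^l\searrow u_t$ (the common decreasing limit is a weak subgeodesic with endpoints $u_0,u_1$, hence equals $u_t$ by the envelope characterization \eqref{fegeod}). Since each $t\mapsto (u_1-u_t^l)/(1-t)$ is non-decreasing in $t$ by convexity along the geodesic, exchanging the $t\to 1^-$ and $l\to\infty$ limits yields $0\leq \dot u_1^l\nearrow \dot u_1$ pointwise a.e.; monotone convergence then delivers $\int_X|\dot u_1^l|^p\omega_{u_1}^n\to\int_X|\dot u_1|^p\omega_{u_1}^n$. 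The main obstacle lies precisely in this last step: rigorously justifying the monotone convergence $\dot u_1^l\nearrow\dot u_1$, since $\dot u_1$ for the non-smooth finite-energy geodesic is only defined as a difference-quotient limit, so one must carefully interchange two monotone limits at the boundary $t=1$ while keeping track of $L^p(\omega_{u_1}^n)$-integrability.
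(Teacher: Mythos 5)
Your proof follows the same approach as the paper's: choose smooth decreasing approximations $u_0^l\searrow u_0$ with $u_0^l\le u_1$, invoke the constant-speed identity for the $C^{1\bar 1}$-geodesic from $u_0^l$ to $u_1$ (the paper cites \cite[Theorem~1]{Da15} directly), and conclude by monotone convergence using $0\le \dot u_1^l\nearrow\dot u_1$. The paper simply asserts this last monotone convergence, whereas you correctly identify and justify it via the double-monotonicity of the secant slopes $(u_1-u_t^l)/(1-t)$ in $t$ and $l$; this is the same argument, just spelled out more carefully.
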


By adapting the arguments of \cite[Lemma 2.4]{BDL16} to the $L^p$ case, we see that the condition $u_0 \leq u_1$ is in fact superfluous, however the above formulation will suffice for our purposes.

\begin{proof} Without loss of generality we can assume that $u_0 < u_1$. We choose $u^k_0 \in \mathcal H_\o$ such that $u^k_0 \searrow u_0$ and $u_0^k \leq u_1$. Let $t \to u^k_t$ be the $C^{1\bar 1}$-geodesic connecting $u^k_0$ and $u_1$. Then \cite[Theorem 1]{Da15} gives that $d_p(u^k_0,u_1)^p = \int_X |\dot u_1^k|^p \o_{u_1}^n.$
We notice that $0 \leq \dot u_1^k \nearrow \dot u_1$, hence the monotone convergence theorem yields the conclusion.
\end{proof}

\section{Quantization of the Pythagorean formula and
a new Lidskii type inequality}

The goal of this section is to prove Theorem \ref{thm: Quant_Pyt_rooftop_intr}. Before we carry that out, we indicate how Lidskii's inequality (Theorem \ref{thm: Lidski_cor}) will be used in our context:
\begin{theorem} \label{thm: mon_rev_triang_ineq} Let $p \geq 1$.\\
\noindent (i) Let $U,V,W \in \mathcal H_k$ such that $U \leq V \leq W$, then $d_{p,k}(V,W)^p \leq d_{p,k}(U,W)^p - d_{p,k}(U,V)^p.$\\
\noindent (ii) Let $u,v,w \in \mathcal E^p_\o$ such that $u \geq v \geq w$, then $d_p(v,w)^p \leq d_p(u,w)^p - d_p(u,v)^p.$
\end{theorem}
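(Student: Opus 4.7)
\textbf{Proof plan for Theorem \ref{thm: mon_rev_triang_ineq}.} The plan is to deduce (i) as an essentially immediate consequence of the Lidskii-type inequality already established in Theorem \ref{thm: Lidski_cor}, and then to obtain (ii) by quantizing (i) via the Hilbert map $\hbox{H}_k$, using the monotonicity of $\hbox{H}_k$ together with the distance quantization result (Theorem \ref{thm: dist_quant}).

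\textbf{Step 1: rewrite $d_{p,k}$ as a trace of a matrix logarithm.} Combining the definition $d_{p,k}=\frac{1}{k}d_{p,\mathcal P_{\textup{d}_k}}$ with formula \eqref{eq: d_p_Q_def}, for any $H_0,H_1\in\mathcal H_k$,
\[
\textup{d}_k\, k^p\, d_{p,k}(H_0,H_1)^p \;=\; \tr\bigl[\bigl|\log(H_0^{-1}H_1)\bigr|^{p}\bigr].
\]
In particular, when $H_0\le H_1$ the eigenvalues of $H_0^{-1}H_1$ are all $\geq 1$, so the absolute value can be removed.

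\textbf{Step 2: reduce (i) to Theorem \ref{thm: Lidski_cor}.} Given $U\le V\le W$ in $\mathcal H_k$, set
\[
A:=U^{-1/2}VU^{-1/2},\qquad B:=U^{-1/2}WU^{-1/2}.
\]
Then $I\le A\le B$, and since $A$ and $U^{-1}V$ are conjugate, and $A^{-1}B=U^{1/2}V^{-1}WU^{-1/2}$ is conjugate to $V^{-1}W$, the three relevant spectra coincide respectively with those of $U^{-1}V$, $V^{-1}W$, $U^{-1}W$. Thus Theorem \ref{thm: Lidski_cor} applied to $A,B$ reads
\[
\tr\bigl[(\log V^{-1}W)^p\bigr]+\tr\bigl[(\log U^{-1}V)^p\bigr]\;\le\;\tr\bigl[(\log U^{-1}W)^p\bigr],
\]
and by Step 1 this is precisely $\textup{d}_k k^p$ times the claimed inequality
\[
d_{p,k}(V,W)^p \le d_{p,k}(U,W)^p - d_{p,k}(U,V)^p.
\]

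\textbf{Step 3: quantize to obtain (ii).} The Hilbert map $\hbox{H}_k$ is order-reversing: if $u\ge v$ then $e^{-ku}\le e^{-kv}$, hence $\hbox{H}_k(u)\le \hbox{H}_k(v)$ in $\mathcal H_k$. So for $u\ge v\ge w$ in $\mathcal E^p_\o$, setting $U_k:=\hbox{H}_k(u)$, $V_k:=\hbox{H}_k(v)$, $W_k:=\hbox{H}_k(w)$ we have $U_k\le V_k\le W_k$ and part (i) yields
\[
d_{p,k}(V_k,W_k)^p\;\le\;d_{p,k}(U_k,W_k)^p-d_{p,k}(U_k,V_k)^p.
\]
Passing to the limit $k\to\infty$ in each of the three $d_{p,k}$ terms using Theorem \ref{thm: dist_quant} (quantization of distance), the inequality persists and becomes exactly
\[
d_p(v,w)^p\;\le\;d_p(u,w)^p-d_p(u,v)^p,
\]
proving (ii).

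\textbf{Main obstacle.} The hard work has already been done upstream: the matrix Lidskii inequality Theorem \ref{thm: Lidski_cor}, and the distance quantization Theorem \ref{thm: dist_quant}. With those in hand, the only conceptual point is the matching of spectra in Step 2 and the use of order-reversal of $\hbox{H}_k$ in Step 3; both are routine. The real content of the theorem is that a purely matrix-theoretic fact, after quantization, delivers a genuinely new inequality on $(\mathcal E^p_\o,d_p)$ which would be hard to access by direct pluripotential methods.
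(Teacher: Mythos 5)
Your proposal is correct and matches the paper's own proof in all essentials: (i) is reduced to the matrix Lidskii inequality of Theorem \ref{thm: Lidski_cor}, and (ii) follows by applying (i) to $\hbox{H}_k(u)\le\hbox{H}_k(v)\le\hbox{H}_k(w)$ and letting $k\to\infty$ via Theorem \ref{thm: dist_quant}. The only cosmetic difference is that the paper says ``choose a $U$-orthonormal basis so that $U=I$'' where you spell out the same normalization as the explicit congruence $A=U^{-1/2}VU^{-1/2}$, $B=U^{-1/2}WU^{-1/2}$.
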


It would be interesting to see if the estimate of (ii) holds in case the K\"ahler metric $\o$ does not necessarily coming from the curvature of a line bundle. Either way, this result represents a significant strengthening of \cite[Lemma 4.2]{Da15}.

\begin{proof} First we point out how (i) implies (ii). Since $u \geq v \geq w$, monotonicity of ${{\hbox{\rm H}}}_k$ implies that
${{\hbox{\rm H}}}_k (u) \leq {{\hbox{\rm H}}}_k (v) \leq {{\hbox{\rm H}}}_k (w)$. Applying (i) to these Hermitian forms we get that
$$d_{p,k}({{\hbox{\rm H}}}_k (v),{{\hbox{\rm H}}}_k (w))^p \leq d_{p,k}({{\hbox{\rm H}}}_k (u),{{\hbox{\rm H}}}_k (w))^p - d_{p,k}({{\hbox{\rm H}}}_k (u),{{\hbox{\rm H}}}_k (v))^p.$$
An application of Theorem \ref{thm: dist_quant} now yields (ii).

Now we argue (i). After choosing a $U$-orthonormal basis of $H^0(X,L^k)$, we can assume that $U$ is the identity matrix. Comparing with \eqref{eq: d_p_Q_def}, the desired estimate is equivalent to the following one:
$$
{{\tr}} \big[(\log V^{-1}W)^p\big]  + {{\tr}}\big[ (\log V)^p \big]\leq {{\tr}} \big[(\log W)^p\big].$$
This is exactly what was proved in Theorem \ref{thm: Lidski_cor}.
\end{proof}

The next auxilliary result states that rooftop envelopes are stable under ``variation of the roof'' in $\mathcal E^p_\o:$

\begin{lemma}\label{lem: P_stab} Suppose $u^j_0,u_0,u^j_1,u_1 \in \mathcal E^p_\o$ such that $d_p(u^j_0,u_0) \to 0$ and $d_p(u^j_1,u_1) \to 0$.  Then $d_p(P(u_0^j,u_1^j),P(u_0,u_1)) \to 0$.
\end{lemma}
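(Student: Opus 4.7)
The plan is to reduce to monotone $d_p$-convergence via a sandwich construction, then apply the monotonicity of $P$ in each argument together with \cite[Lemma 4.2]{Da15} to squeeze. After passing to a subsequence so that $\sum_j \bigl(d_p(u_0^j,u_0)+d_p(u_1^j,u_1)\bigr)<\infty$, I would construct for each $i\in\{0,1\}$ auxiliary sequences $a_i^j,b_i^j\in\mathcal E^p_\o$ with $a_i^j\leq u_i^j\leq b_i^j$, $a_i^j\nearrow u_i$, $b_i^j\searrow u_i$, all monotone and $d_p$-convergent. Natural candidates are $b_i^j:=\bigl(\sup_{k\geq j}u_i^k\bigr)^*$ and $a_i^j$ obtained as the usc decreasing limit of the finite rooftops $P(u_i^j,u_i^{j+1},\ldots,u_i^{j+N})$ as $N\to\infty$; Cauchy summability together with Lemmas~\ref{lem: uniform_est} and \ref{lem: limit_energy_L^p} guarantees these land in $\mathcal E^p_\o$ and enjoy the stated monotone $d_p$-convergence.

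Once the sandwich is in place, the monotonicity of $P$ gives
$$P(a_0^j,a_1^j)\leq P(u_0^j,u_1^j),\ P(u_0,u_1)\leq P(b_0^j,b_1^j),$$
so two applications of \cite[Lemma 4.2]{Da15} along this chain yield
$$d_p\bigl(P(u_0^j,u_1^j),P(u_0,u_1)\bigr)\leq 2\,d_p\bigl(P(a_0^j,a_1^j),P(b_0^j,b_1^j)\bigr),$$
reducing the problem to monotone continuity of $P$. The decreasing case is immediate: the pointwise limit $w:=\lim_j P(b_0^j,b_1^j)$ is $\o$-psh and $\leq b_i^j$, hence $\leq u_i$, so $w\leq P(u_0,u_1)$; conversely $P(u_0,u_1)\leq b_i^j$ gives $P(u_0,u_1)\leq P(b_0^j,b_1^j)$, forcing $w=P(u_0,u_1)$, and monotone decreasing $d_p$-convergence within $\mathcal E^p_\o$ follows from standard $L^p$-energy continuity (Lemma~\ref{lem: limit_energy_L^p}) together with the double estimate from \cite[Theorem 3]{Da15}.

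The main obstacle is the increasing case. Setting $s:=\bigl(\lim_j P(a_0^j,a_1^j)\bigr)^*$, one immediately has $s\in\mathcal E^p_\o$ (uniform $L^p$-energy bound via Lemma~\ref{lem: fund_ineq} and Lemma~\ref{lem: uniform_est}) and $s\leq P(u_0,u_1)$ from $a_i^j\leq u_i$; the reverse inequality is the delicate point. I would argue it by invoking the characterization that the non-pluripolar Monge--Amp\`ere $\o^n_{P(a_0^j,a_1^j)}$ is concentrated on the contact set $\{P(a_0^j,a_1^j)\in\{a_0^j,a_1^j\}\}$, then passing to the limit along the monotone increasing sequence via weak continuity of non-pluripolar Monge--Amp\`ere measures to conclude that $\o^n_s$ is concentrated on $\{s\in\{u_0,u_1\}\}$. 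Combined with $s\leq P(u_0,u_1)$, the $\mathcal E^p_\o$-uniqueness principle of \cite{Diw09} forces $s=P(u_0,u_1)$, and monotone increasing $d_p$-convergence then closes the argument via Lemma~\ref{lem: limit_energy_L^p} and \cite[Theorem 3]{Da15}.
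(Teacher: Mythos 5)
Your proposal works far harder than needed and, as written, has a genuine gap. The paper's proof is essentially two lines: by the triangle inequality,
$$d_p\bigl(P(u_0^j,u_1^j),P(u_0,u_1)\bigr)\leq d_p\bigl(P(u_0^j,u_1^j),P(u_0^j,u_1)\bigr)+d_p\bigl(P(u_0^j,u_1),P(u_0,u_1)\bigr),$$
and each term on the right is bounded by $d_p(u_1^j,u_1)$ and $d_p(u_0^j,u_0)$ respectively, using the projection-type inequality $d_p(P(u,v),P(u,w))\leq d_p(v,w)$ of \cite[Proposition 4.12]{Da17}. That inequality is precisely the $d_p$-Lipschitz continuity of $P$ in each slot, and it makes the whole sandwich-and-monotone-continuity detour unnecessary.

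Beyond being circuitous, the sketch you give for the increasing half of the squeeze has a real gap. Showing $P(a_0^j,a_1^j)\nearrow P(u_0,u_1)$ when $a_i^j\nearrow u_i$ is itself a non-trivial continuity statement for $P$, and the argument you outline does not close it. The inequality $s\leq P(u_0,u_1)$ is clear, but passing the concentration of $\o^n_{P(a_0^j,a_1^j)}$ on the contact set $\{P(a_0^j,a_1^j)\in\{a_0^j,a_1^j\}\}$ to the limit is delicate: both the measures \emph{and} the sets move with $j$, and weak continuity of non-pluripolar Monge--Amp\`ere measures along increasing sequences does not by itself give concentration of $\o^n_s$ on the limit set $\{s\in\{u_0,u_1\}\}$ (one needs a capacity or outer-regularity argument to compare the moving contact sets). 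Moreover, the uniqueness principle of \cite{Diw09}, which identifies two $\mathcal E_\o$-potentials with the same Monge--Amp\`ere measure up to an additive constant, is not directly applicable: you do not have $\o^n_s=\o^n_{P(u_0,u_1)}$, only a concentration statement, so an additional domination-principle argument is required to conclude $s\geq P(u_0,u_1)$. These are fillable but genuine holes. There are also smaller issues: you pass to a subsequence for summability without invoking the standard subsequence-of-subsequences reduction, and the membership of $b_i^j=(\sup_{k\geq j}u_i^k)^*$ and of $a_i^j$ in $\mathcal E^p_\o$ together with the claimed monotone $d_p$-convergence needs more than a pointer to Lemmas \ref{lem: uniform_est} and \ref{lem: limit_energy_L^p}. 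The shortest way to repair all of this at once is exactly the projection inequality you did not invoke; once you have it, the lemma is immediate and the monotone machinery collapses.
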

\begin{proof} We start by applying the triangle inequality:
\begin{equation}\label{eq: triang_ineq}
d_p(P(u_0^j,u_1^j),P(u_0,u_1)) \leq d_p(P(u_0^j,u_1^j),P(u^j_0,u_1)) + d_p(P(u_0^j,u_1),P(u_0,u_1)) 
\end{equation}
Given $u,v,w \in \mathcal{E}^p_{\omega}$, we recall the projection type inequality of \cite[Proposition 4.12]{Da17}:
$$
d_p(P(u,v),P(u,w)) \leq d_p(v,w). 
$$
Using this twice in  \eqref{eq: triang_ineq} we arrive at
$d_p(P(u_0^j,u_1^j),P(u_0,u_1)) \leq d_p(u_1^j,u_1) + d_p(u_0^j,u_0).$
Since $d_p(u_1^j,u_1),d_p(u_0^j,u_0) \to 0$, the result follows.
\end{proof}

Next we prove Theorem \ref{thm: Quant_Pyt_rooftop_intr}(ii):

\begin{theorem}\label{eq: Pyt_limit} Suppose that $u_0,u_1 \in \mathcal E^p_\o$. Then the following hold:
$$\lim_k d_{p,k}\big({{\hbox{\rm H}}}_k(u_0), {{P}}_k({{\hbox{\rm H}}}_k(u_0),{{\hbox{\rm H}}}_k(u_1)) \big)=d_p(u_0,P(u_0,u_1)),$$
$$\lim_k d_{p,k}\big({{\hbox{\rm H}}}_k(u_1), {{P}}_k({{\hbox{\rm H}}}_k(u_0),{{\hbox{\rm H}}}_k(u_1)) \big)=d_p(u_1,P(u_0,u_1)).$$
\end{theorem}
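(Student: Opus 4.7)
The strategy is to combine the classical and quantum Pythagorean formulas with the distance quantization theorem, and thereby reduce the statement to a one-sided asymptotic comparison of the individual distance terms.

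Set $v := P(u_0,u_1) \in \mathcal{E}^p_\o$ and write
$$
A_k := d_{p,k}({{\hbox{\rm H}}}_k(u_0),{{\hbox{\rm H}}}_k(u_1))^p, \quad B_k := d_{p,k}({{\hbox{\rm H}}}_k(u_0), {{P}}_k)^p, \quad C_k := d_{p,k}({{\hbox{\rm H}}}_k(u_1), {{P}}_k)^p,
$$
where ${{P}}_k := {{P}}_k({{\hbox{\rm H}}}_k(u_0),{{\hbox{\rm H}}}_k(u_1))$, and similarly $A := d_p(u_0,u_1)^p$, $B := d_p(u_0,v)^p$, $C := d_p(u_1,v)^p$. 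The classical Pythagorean formula of \cite{Da15} gives $A = B + C$, while the quantum Pythagorean formula \eqref{eq: Pyth_formula_quant_intr} gives $A_k = B_k + C_k$. By Theorem \ref{thm: dist_quant}, $A_k \to A$, so $B_k + C_k \to B + C$.

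To conclude, it suffices to establish the lower bounds $\liminf_k B_k \geq B$ and $\liminf_k C_k \geq C$: once these hold, $\limsup_k B_k = \lim_k (B_k + C_k) - \liminf_k C_k \leq (B+C) - C = B$, forcing $B_k \to B$, and symmetrically $C_k \to C$. To obtain $\liminf_k B_k \geq B$ I would invoke two already-established convergences: Theorem \ref{thm: point_quant} gives $\FSk \circ {{\hbox{\rm H}}}_k(u_0) \to u_0$ in $d_p$, and part (i) of Theorem \ref{thm: Quant_Pyt_rooftop_intr}, proved earlier in this section, gives $\FSk({{P}}_k) \to v$ in $d_p$. Continuity of $d_p$ then yields
$$
d_p\bigl(\FSk \circ {{\hbox{\rm H}}}_k(u_0),\, \FSk({{P}}_k)\bigr) \longrightarrow d_p(u_0,v) = B^{1/p}.
$$
The remaining step is to transfer this convergence to the quantum side through the $1$-Lipschitz property of the Fubini--Study map, $d_p(\FSk(h_0),\FSk(h_1)) \leq d_{p,k}(h_0,h_1)$ for $h_0,h_1 \in \mathcal{H}_k$; applied to $(h_0,h_1) = ({{\hbox{\rm H}}}_k(u_0), {{P}}_k)$ this yields $\liminf_k B_k \geq B$, and swapping $u_0 \leftrightarrow u_1$ gives $\liminf_k C_k \geq C$.

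The main obstacle is establishing the $1$-Lipschitz property of $\FSk$. For $p=2$ this is a theorem of Berndtsson \cite{Bern09}; for general $p \geq 1$ the argument should proceed via the observation that if $t \mapsto h_t$ is the $L^p$-Finsler geodesic in $(\mathcal{H}_k, d_{p,k})$ joining $h_0, h_1$, then $t \mapsto \FSk(h_t)$ is a weak $\o$-psh subgeodesic in $\mathcal{H}_\o$, and a direct computation using the extremal characterization \eqref{eq: FSk_extremal_def} bounds its $L^p$-Finsler length in $\mathcal{H}_\o$ by $d_{p,k}(h_0,h_1)$. An alternative route, bypassing this Lipschitz estimate and relying instead on Theorem \ref{thm: mon_rev_triang_ineq}(i), would be to construct for each large $k$ a Hermitian form $W_k \in \mathcal{H}_k$ with ${{P}}_k \leq W_k$, $d_{p,k}({{\hbox{\rm H}}}_k(u_0), W_k) \to B^{1/p}$, and $d_{p,k}(W_k, {{P}}_k) \to 0$, yielding the matching upper bound $\limsup_k B_k \leq B$ directly.
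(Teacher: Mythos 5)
Your overall framework — invoking the classical and quantum Pythagorean formulas, observing $B_k+C_k\to B+C$ via Theorem \ref{thm: dist_quant}, and reducing to the one-sided bounds $\liminf_k B_k\geq B$ and $\liminf_k C_k\geq C$ — is exactly the paper's setup. The problem is the way you obtain those lower bounds.

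You invoke Theorem \ref{thm: Quant_Pyt_rooftop_intr}(i), asserting it is "proved earlier in this section." It is not: in the paper, part (ii) (the very statement being proved here) is established \emph{first}, and part (i) is proved afterward using part (ii) together with Claims 1 and 2 from its proof. Appealing to part (i) therefore makes your argument circular. This is the crucial gap, and it is why the paper cannot argue the way you propose; instead it establishes, without reference to part (i), the chain
\begin{equation*}
d_p(u_0,P(u_0,u_1))\;\leq\;\liminf_{k_j} d_p\bigl(\FSk\circ{\hbox{\rm H}}_{k_j}(u_0),\,\FS_{k_j}(P_{k_j})\bigr)\;\leq\;\limsup_{k_j} d_{p,k_j}\bigl({\hbox{\rm H}}_{k_j}(u_0),P_{k_j}\bigr),
\end{equation*}
where the right-hand inequality (Claim 1) is proved by approximating $u_0$ from above by $v\in\calHo$, comparing the Fubini--Study image of the matrix geodesic with the $C^{1\bar1}$ geodesic, and invoking the Bergman kernel asymptotics together with the new monotone-reverse-triangle inequality (Theorem \ref{thm: mon_rev_triang_ineq}); and the left-hand inequality (Claim 2) follows from monotonicity of $\FSk$, the definition of $P$, Theorem \ref{thm: mon_rev_triang_ineq} again, and then Theorem \ref{thm: dist_quant} together with the stability Lemma \ref{lem: P_stab}.

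A secondary gap: the $1$-Lipschitz property $d_p(\FSk(h_0),\FSk(h_1))\leq d_{p,k}(h_0,h_1)$ for general $p\geq1$, which your main route hinges on, is left as a sketch ("should proceed via..."). This is a nontrivial claim that the paper never states or uses; Claim 1 is a weaker asymptotic comparison, obtained only after considerable work. Your alternative route (constructing $W_k$ with $P_k\leq W_k$, etc.) is likewise only gestured at. Taken together, the circularity on part (i) plus the unproved Lipschitz step mean the argument as written does not close.
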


\begin{proof}
We prove both of the identities simultaneously. The proof is carried out in multiple steps. Let us set the stage.

By Theorem \ref{thm: dist_quant} the sequence $d_{p,k}({{\hbox{\rm H}}}_k(u_0),{{\hbox{\rm H}}}_k(u_1))$ converges to $d_p(u_0,u_1)$, hence it is bounded. Consequently,  the quantum Pythagorean formula \eqref{eq: Pyth_formula_quant_intr} implies that  the sequences 
$$l_0^k:=d_{p,k}({{\hbox{\rm H}}}_k(u_0),{{P}}_k({{\hbox{\rm H}}}_k(u_0),{{\hbox{\rm H}}}_k(u_1)))
 \ \ \textup{ and } \ \ \ l^k_1:=d_{p,k}({{\hbox{\rm H}}}_k(u_1),{{P}}_k({{\hbox{\rm H}}}_k(u_0),{{\hbox{\rm H}}}_k(u_1)))$$ 
are bounded too. To finish the proof, it is enough to show that the cluster set of the first sequence contains only $d_p(u_0,P(u_0,u_1))$, and similarly, the cluster set of the second sequence only contains $d_p(u_1,P(u_0,u_1))$. For this, we choose a convergent subsequence  $l^{k_j}_0$.
By the quantum Pythagorean formula  \eqref{eq: Pyth_formula_quant_intr}, $l^{k_j}_1$ has to converge as well, and their limit $l_0$ and $l_1$ satisfies $d_p(u_0,u_1)^p = l_0^p + l_1^p.$
Comparing this equation with the Pythagorean formula \eqref{eq: Pythagorean_intr}, we are done if we can show that $l_0 \geq d_p(u_0,P(u_0,u_1))$ and $l_1 \geq d_p(u_1,P(u_0,u_1))$. The first estimate immediately follows after putting together the estimates of the next two claims. The second one follows after reversing the roles of $u_0,u_1$. \vspace{0.1cm}

\noindent \textbf{Claim 1.} For any sequence $k_j \to \infty$ we have
\begin{flalign*}\limsup_{k_j} d_p(\FS_{k_j}  \circ {{\hbox{\rm H}}}_{k_j}(u_0), & \FS_{k_j}( P_{k_j}({{\hbox{\rm H}}}_{k_j}(u_0),{{\hbox{\rm H}}}_{k_j}(u_1)))) \leq  \nonumber \\
& \leq  \limsup_{k_j} d_{p,k_j}({{\hbox{\rm H}}}_{k_j}(u_0),P_{k_j}({{\hbox{\rm H}}}_{k_j}(u_0),{{\hbox{\rm H}}}_{k_j}(u_1))).
\end{flalign*}
To prove this, let $v \in \mathcal H_\o$ and $\varepsilon >0$ such that $u_0 \leq v$ and $d_p(u_0,v) \leq \varepsilon$. From Theorem \ref{thm: dist_quant} it follows that $\lim_k d_{p,k}({{\hbox{\rm H}}}_k(u_0),{{\hbox{\rm H}}}_k(v)) \leq \varepsilon$. The triangle inequality then implies that:
\begin{flalign}\label{eq: limsup_P}
\limsup_{k_j} d_{p,k_j}({{\hbox{\rm H}}}_{k_j}(v)&,P_{k_j}({{\hbox{\rm H}}}_{k_j}(u_0),{{\hbox{\rm H}}}_{k_j}(u_1))) \leq \nonumber \\
&\leq\limsup_{k_j} d_{p,k_j}({{\hbox{\rm H}}}_{k_j}(u_0),P_{k_j}({{\hbox{\rm H}}}_{k_j}(u_0),{{\hbox{\rm H}}}_{k_j}(u_1))) +\varepsilon.
\end{flalign}
Suppose that $[0,1] \ni t \to v^k_t \in \mathcal E^p_\o$ is the $C^{1,\bar{1}}$ weak geodesic connecting ${\FSk} \circ {{\hbox{\rm H}}}_k(v)$ and ${\FSk} \circ {{P}}_k({{\hbox{\rm H}}}_k(u_0),{{\hbox{\rm H}}}_k(u_1))$.
Also, let $[0,1] \ni t \to V^k_t \in \mathcal H_k$ be the geodesic connecting ${{\hbox{\rm H}}}_k(v)$ and ${{P}}_k({{\hbox{\rm H}}}_k(u_0),{{\hbox{\rm H}}}_k(u_1))$. Note that $V^k_t$ is of the following form:
$$V^k_t(s,s) = \sum_{j=1}^{d_k}|s_j|^2\int_X h^k_L(e_j,e_j)e^{-kv + \lambda_j^kt} \o^n, \ \ \ s = \sum_{j=1}^{d_k} s_j e_j \in H^0(X,L^k),$$
where $\{e_j\}_j$ is a $V^k_0$-orthonormal basis of $H^0(X,L^k)$, diagonalizing $V^k_1$ with eigenvalues $\{e^{\lambda_j^k}\}_j$.
From here, an elementary calculation yields that
$${\FSk} \big(V^k_t\big) = \frac{1}{k} \log \bigg(\sum_{j=1}^{\textup{d}_k}  h^k_L(e_j,e_j)e^{ - \lambda_j^kt} \bigg).$$
To continue, we note that the curve $t \to {\FSk} \big(V^k_t\big) $ is a subgeodesic connecting $v^k_0$ and $v^k_1$, hence ${\FSk} \big(V^k_t\big) \leq v_t^k$. This yields $\frac{d}{dt}\big|_{t=0}{\FSk} \big(V^k_t\big) \leq \dot v^k_0 \leq 0$, allowing to compare the  $L^p$ length of the tangent vectors at $t=0$:
\begin{flalign*}
d_p({\FSk}  \circ &{{\hbox{\rm H}}}_k(v),  {\FSk}( {{P}}_k({{\hbox{\rm H}}}_k(u_0),{{\hbox{\rm H}}}_k(u_1))))^p=\frac{1}{V}\int_X |\dot v_0^k|^p \o_{v_0^k}^n \leq \frac{1}{V} \int_X \Big|\frac{d}{dt}\big|_{t=0}{\FSk} \big(V^k_t\big)\Big|^p \o_{v_0^k}^n\\ & = \frac{1}{V k^p}\int_X \bigg| \frac{-\sum_l \lambda_l^k h^k(e_l,e_l)}{\sum_l h^k(e_l,e_l)}\bigg|^p \o_{v_0^k}^n  \leq \frac{1}{V k^p}\int_X  \frac{\sum_l |\lambda_l^k|^p h^k(e_l,e_l)e^{-kv}}{\sum_l h^k(e_l,e_l)e^{-kv}} \o_{v_0^k}^n,
\end{flalign*}
where we used the convexity of the map $t \to |t|^p$. 
Using the Bouche-Catlin-Lu-Tian-Zelditch expansion and the fact that $ \textup{d}_k/k^n \to V$ (the same way as in the proof of Theorem \ref{thm: dist_quant}) we get that 
$$\frac{1}{\textup{d}_k}\sum_{l=1}^{\textup{d}_k}  h^k(e_l,e_l)e^{-k v}\o^n$$ converges uniformly to $\frac{1}{V}\o_v^n$. For similar reasons $\o^n_{v_0^k}$ converges uniformly to $\o_v^n$ as well. Putting these facts together, we arrive at:
\begin{flalign}\label{eq: dvdvest}
\limsup_{k_j} d_p(&\FS_{k_j}  \circ {{\hbox{\rm H}}}_{k_j}(v),  \FS_{k_j}( P_{k_j}({{\hbox{\rm H}}}_{k_j}(u_0),{{\hbox{\rm H}}}_{k_j}(u_1))))^p \leq  \\
& \leq \limsup_{k_j} \frac{1}{k_j^p}\frac{\sum_l |\lambda_l^{{k_j}}|^p}{\textup{d}_{k_j}}=  \limsup_{k_j} d_{p,k_j}({{\hbox{\rm H}}}_{k_j}(v),P_{k_j}({{\hbox{\rm H}}}_{k_j}(u_0),{{\hbox{\rm H}}}_{k_j}(u_1)))^p. \nonumber
\end{flalign}
Now we use the monotonicity of ${\FSk} \circ {{\hbox{\rm H}}}_k$ to conclude that  
$$ {\FSk} \circ {{\hbox{\rm H}}}_k (v) \geq {\FSk} \circ {{\hbox{\rm H}}}_k (u_0) \geq {\FSk} ({{P}}_k({{\hbox{\rm H}}}_k(u_0),{{\hbox{\rm H}}}_k(u_1))).$$ 
Using this, via Theorem \ref{thm: mon_rev_triang_ineq} we conclude that: 
\begin{flalign*}
\limsup_{k_j} d_p(\FS_{k_j} & \circ {{\hbox{\rm H}}}_{k_j}(u_0),  \FS_{k_j} ( {{P}}_{k_j}({{\hbox{\rm H}}}_{k_j}(u_0),{{\hbox{\rm H}}}_{k_j}(u_1)))) \leq \\
&\leq \limsup_{k_j} d_{p}(\FS_{k_j} \circ \ {\hbox{\rm H}}_{k_j}(v), \FS_{k_j} ( {{P}}_{k_j}({{\hbox{\rm H}}}_{k_j}(u_0),{{\hbox{\rm H}}}_{k_j}(u_1))))\\
&\leq  \limsup_{k_j} d_{p,{k_j}}({{\hbox{\rm H}}}_{k_j}(v),{{P}}_{k_j}({{\hbox{\rm H}}}_{k_j}(u_0),{{\hbox{\rm H}}}_{k_j}(u_1)))\\
&\leq  \limsup_{k_j} d_{p,{k_j}}({{\hbox{\rm H}}}_{k_j}(u_0),{{P}}_{k_j}({{\hbox{\rm H}}}_{k_j}(u_0),{{\hbox{\rm H}}}_{k_j}(u_1))) + \varepsilon,
\end{flalign*}
where in the third line we have used \eqref{eq: dvdvest}, and in the last line we have used \eqref{eq: limsup_P}. 
Letting $\varepsilon \to 0$, the claim follows.\vspace{0.1cm}

\noindent \textbf{Claim 2.} 
$d_p(u_0, P(u_0,u_1)) \leq \liminf_k  d_p({\FSk}\circ {{\hbox{\rm H}}}_k(u_0), {\FSk}({{P}}_k({{\hbox{\rm H}}}_k(u_0),{{\hbox{\rm H}}}_k(u_1)))).\vspace{0.1cm}$

Since ${{\hbox{\rm H}}}_k(u_0),{{\hbox{\rm H}}}_k(u_1) \leq {{P}}_k({{\hbox{\rm H}}}_k(u_0),{{\hbox{\rm H}}}_k(u_1))$, monotonicity of ${\FSk}$ and the definition of $P$ implies that 
$${\FSk}({{P}}_k({{\hbox{\rm H}}}_k(u_0),{{\hbox{\rm H}}}_k(u_1))) \leq P({\FSk}\circ {{\hbox{\rm H}}}_k(u_0),{\FSk} \circ {{\hbox{\rm H}}}_k(u_1)) \leq {\FSk}\circ {{\hbox{\rm H}}}_k(u_0).$$
Consequently, using Theorem \ref{thm: mon_rev_triang_ineq} we conclude that
\begin{flalign*}
d_p({\FSk}\circ {{\hbox{\rm H}}}_k(u_0), & P({\FSk}\circ {{\hbox{\rm H}}}_k(u_0),{\FSk} \circ {{\hbox{\rm H}}}_k(u_1))) \leq \\
& \leq d_p({\FSk}\circ {{\hbox{\rm H}}}_k(u_0), {\FSk}({{P}}_k({{\hbox{\rm H}}}_k(u_0),{{\hbox{\rm H}}}_k(u_1)))). \nonumber
\end{flalign*}
Using Theorem \ref{thm: dist_quant} multiple times together with Lemma \ref{lem: P_stab}, we get that the expression on the left hand side converges to $d_p(u_0, P(u_0,u_1))$, proving the claim.
\end{proof}

Lastly, we prove Theorem \ref{thm: Quant_Pyt_rooftop_intr}(i):
 
\begin{theorem}
Suppose $u_0,u_1 \in \mathcal E^p_\o$. Then $d_p(P(u_0,u_1), {\FSk}({{P}}_k({{\hbox{\rm H}}}_k(u_0), {{\hbox{\rm H}}}_k(u_1))))  
\to 0.$
\end{theorem}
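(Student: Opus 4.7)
The plan is to sandwich $\phi_k := \FSk(P_k({{\hbox{\rm H}}}_k(u_0), {{\hbox{\rm H}}}_k(u_1)))$ between $\psi_k := P(\FSk \circ {{\hbox{\rm H}}}_k(u_0), \FSk \circ {{\hbox{\rm H}}}_k(u_1))$, which will converge to $P(u_0,u_1)$ for soft reasons, and the Fubini--Study approximations $\FSk \circ {{\hbox{\rm H}}}_k(u_i)$, which converge to $u_i$ in $d_p$ by Theorem \ref{thm: point_quant}.

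First I would dispose of $\psi_k$. Since $\FSk \circ {{\hbox{\rm H}}}_k(u_i) \to u_i$ in $d_p$ for $i=0,1$ by Theorem \ref{thm: point_quant}, Lemma \ref{lem: P_stab} yields $\psi_k \to P(u_0,u_1)$ in $d_p$. Moreover, the inequalities ${{\hbox{\rm H}}}_k(u_i) \leq P_k({{\hbox{\rm H}}}_k(u_0), {{\hbox{\rm H}}}_k(u_1))$ together with the order-reversing nature of $\FSk$ give $\phi_k \leq \FSk \circ {{\hbox{\rm H}}}_k(u_i)$ for $i=0,1$; hence $\phi_k \leq \psi_k$ by the very definition of $P$ (this was already noted in Claim 2 of the proof of Theorem \ref{eq: Pyt_limit}).

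Next, to control the gap $d_p(\phi_k, \psi_k)$, I would invoke the new Lidskii-type monotone inequality (Theorem \ref{thm: mon_rev_triang_ineq}). Applied to the chain $\phi_k \leq \psi_k \leq \FSk \circ {{\hbox{\rm H}}}_k(u_0)$, it gives
\begin{equation*}
d_p(\phi_k, \psi_k)^p \leq d_p(\FSk \circ {{\hbox{\rm H}}}_k(u_0), \phi_k)^p - d_p(\FSk \circ {{\hbox{\rm H}}}_k(u_0), \psi_k)^p.
\end{equation*}
The subtracted term converges to $d_p(u_0, P(u_0,u_1))^p$ by continuity of $d_p$, using $\FSk \circ {{\hbox{\rm H}}}_k(u_0) \to u_0$ and $\psi_k \to P(u_0,u_1)$. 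For the remaining term, Claim 1 from the proof of Theorem \ref{eq: Pyt_limit}, combined with the conclusion of that theorem, delivers
\begin{equation*}
\limsup_k d_p(\FSk \circ {{\hbox{\rm H}}}_k(u_0), \phi_k)^p \leq \lim_k d_{p,k}({{\hbox{\rm H}}}_k(u_0), P_k({{\hbox{\rm H}}}_k(u_0), {{\hbox{\rm H}}}_k(u_1)))^p = d_p(u_0, P(u_0,u_1))^p.
\end{equation*}
The two contributions therefore cancel in the limit, forcing $d_p(\phi_k, \psi_k) \to 0$. A final application of the triangle inequality $d_p(\phi_k, P(u_0,u_1)) \leq d_p(\phi_k, \psi_k) + d_p(\psi_k, P(u_0,u_1))$ then closes the argument.

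The main subtlety is that the ordinary triangle inequality is too weak for the last cancellation; it is essential to use the strict \emph{subtraction} form of the Lidskii-type inequality along a monotone chain. The fact that the bound furnished by Claim 1 turns out to be sharp in the limit—forced by the full convergence statement of Theorem \ref{eq: Pyt_limit}—is precisely what makes the cancellation exact and, together with the Lidskii inequality, drives the result.
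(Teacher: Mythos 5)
Your proposal is correct and follows essentially the same route as the paper's own proof: the same sandwich $\phi_k\le\psi_k$, the same appeal to Theorem~\ref{thm: point_quant} and Lemma~\ref{lem: P_stab} to dispose of $\psi_k$, and the same deployment of the Lidskii-type inequality (Theorem~\ref{thm: mon_rev_triang_ineq}) on the chain $\phi_k\le\psi_k\le\FSk\circ{\hbox{\rm H}}_k(u_0)$, fed by Claim~1 and the conclusion of Theorem~\ref{eq: Pyt_limit}. The only (cosmetic) difference is that the paper first upgrades the $\limsup$ bound to a full limit (their \eqref{eq: sec_lim}, via Claim~2) before applying Lidskii, whereas you observe—correctly—that the one-sided $\limsup$ bound already suffices for the cancellation.
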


\begin{proof} As it turns out, we already carried out most of the hard work in the proof of the previous theorem. Recall that Theorem \ref{thm: point_quant} applied multiple times, together with Lemma \ref{lem: P_stab} implies that
\begin{flalign} \label{eq: first_lim}
\lim_k d_p({\FSk}\circ {{\hbox{\rm H}}}_k(u_0), P({\FSk}\circ {{\hbox{\rm H}}}_k(u_0),{\FSk} \circ {{\hbox{\rm H}}}_k(u_1))) = d_p(u_0, P(u_0,u_1)).
\end{flalign}
Furthermore Theorem \ref{eq: Pyt_limit} together with Claim 1 from its proof imply that
$$\limsup_k d_p({\FSk}\circ {{\hbox{\rm H}}}_k(u_0), {\FSk}({{P}}_k({{\hbox{\rm H}}}_k(u_0),{{\hbox{\rm H}}}_k(u_1)))) \leq d_p(u_0, P(u_0,u_1)).$$
This together with Claim 2 from the same argument now readily gives that 
\begin{flalign}\label{eq: sec_lim}
\lim_k d_p({\FSk}\circ {{\hbox{\rm H}}}_k(u_0), {\FSk}({{P}}_k({{\hbox{\rm H}}}_k(u_0),{{\hbox{\rm H}}}_k(u_1)))) = d_p(u_0, P(u_0,u_1)).
\end{flalign}

Since ${\FSk}({{P}}_k({{\hbox{\rm H}}}_k(u_0),{{\hbox{\rm H}}}_k(u_1))) \leq P({\FSk} \circ {{\hbox{\rm H}}}_k(u_0),{\FSk} \circ {{\hbox{\rm H}}}_k(u_1)) \leq {\FSk}\circ {{\hbox{\rm H}}}_k(u_0)$, Theorem \ref{thm: mon_rev_triang_ineq} is applicable, and \eqref{eq: first_lim} together with \eqref{eq: sec_lim}  imply that 
\begin{equation}\label{eq: last_est}
d_p({\FSk}({{P}}_k({{\hbox{\rm H}}}_k(u_0),{{\hbox{\rm H}}}_k(u_1)),P({\FSk} \circ {{\hbox{\rm H}}}_k(u_0),{\FSk} \circ {{\hbox{\rm H}}}_k(u_1))) \to 0.
\end{equation}
From Lemma \ref{lem: P_stab} and Theorem \ref{thm: point_quant}, $d_p(P(u_0,u_1),P({\FSk} \circ {{\hbox{\rm H}}}_k(u_0),{\FSk} \circ {{\hbox{\rm H}}}_k(u_1))) \to 0.$ This together with \eqref{eq: last_est} and the triangle inequality implies that
$$d_p({\FSk}({{P}}_k({{\hbox{\rm H}}}_k(u_0),{{\hbox{\rm H}}}_k(u_1))),P(u_0,u_1)) \to 0,$$ what we wanted to prove.
\end{proof}

\medskip

\def\bi{\bibitem}

\footnotesize
\let\OLDthebibliography\thebibliography 
\renewcommand\thebibliography[1]{
  \OLDthebibliography{#1}
  \setlength{\parskip}{1pt}
  \setlength{\itemsep}{1pt}
}

\bigskip
\normalsize
\noindent{\sc University of Maryland}\\
\vspace{0.2cm}\noindent {\tt tdarvas@math.umd.edu, yanir@umd.edu} \\
\noindent {\sc Universit\'e Paris-Sud}\\
\vspace{0.2cm}\noindent{\tt hoang-chinh.lu@u-psud.fr}\\
\end{document}